\definecolor{gray}{rgb}{0.25, 0.25, 0.25}
\newtheorem{theorem}{Theorem}[section]
\newtheorem{lemma}[theorem]{Lemma}
\newtheorem{cor}[theorem]{Corollary}
\newtheorem{question}[theorem]{Question}
\newtheorem*{theorem*}{Theorem}
\theoremstyle{definition}
\newtheorem{subclaim}{Subclaim}
\newenvironment{poc}{\begin{proof}[Proof of claim]}{\end{proof}}
\newcounter{propcounter}
\theoremstyle{plain}
\newtheorem{claim}[theorem]{Claim}
\newtheorem{prop}[theorem]{Proposition}
\theoremstyle{definition}
\newtheorem{rmk}[theorem]{Remark}
\theoremstyle{definition}
\theoremstyle{definition}
\theoremstyle{definition}
\newtheorem{defn}[theorem]{Definition}
\theoremstyle{definition}
\theoremstyle{definition}
\newcommand\ex{\ensuremath{\mathrm{ex}}}
\newcommand{\eps}{\varepsilon}
\newcommand{\cS}{\mathcal{S}}
\newcommand{\cP}{\mathcal{P}}
\newcommand{\Ho}{\H{o}}
\newcommand{\emref}[1]{\emph{\ref{#1}}}
\newcommand{\makenote}[2]
{
	
	\smallskip
	
	\noindent
	
	\fbox{
		\begin{minipage}{0.95\textwidth}
			
			\def\temp{#1}
			\ifx\temp\empty
			\def\forlabel{$\bullet$}
			\else
			\def\forlabel{{\bfseries #1:}}
			\fi
			
			\begin{itemize}[label = \forlabel]
				#2
			\end{itemize}
		\end{minipage}
	}
	
	\smallskip
}
\title{Crux, space constraints and subdivisions}
\author{
	Seonghyuk Im
	\thanks{Department of Mathematical Sciences, KAIST, South Korea \& Extremal Combinatorics and Probability Group (ECOPRO), Institute for Basic Science(IBS), South Korea. Email: {\tt seonghyuk@kaist.ac.kr}. Supported by the POSCO Science Fellowship of POSCO TJ Park Foundation and IBS-R029-C4.}
	\and
	Jaehoon Kim\thanks{Department of Mathematical Sciences, KAIST, South Korea. Email: {\tt jaehoon.kim@kaist.ac.kr}. Supported by the POSCO Science Fellowship of POSCO TJ Park Foundation.}
	\and
	Younjin Kim
	\thanks{Extremal Combinatorics and Probability Group (ECOPRO), Institute for Basic Science (IBS), Daejeon, South Korea, Email: {\tt \{younjinkim,hongliu\}@ibs.re.kr}. Supported by IBS-R029-C4.}
	\and
	Hong Liu\footnotemark[3]
}
\begin{document}
	\maketitle
	\begin{abstract}
	For a given graph $H$, its subdivisions carry the same topological structure. The existence of $H$-subdivisions within a graph $G$ has deep connections with topological, structural and extremal properties of $G$. One prominent example of such a connection, due to Bollob\'{a}s and Thomason and independently Koml\'os and Szemer\'edi, asserts that the average degree of $G$ being $d$ ensures a $K_{\Omega(\sqrt{d})}$-subdivision in $G$. Although this square-root bound is best possible, various results showed that much larger clique subdivisions can be found in a graph for many natural classes. We investigate the connection between crux, a notion capturing the essential order of a graph, and the existence of large clique subdivisions. This reveals the unifying cause underpinning all those improvements for various classes of graphs studied. Roughly speaking, when embedding subdivisions, natural space constraints arise; and such space constraints can be measured via crux.
	
	Our main result gives an asymptotically optimal bound on the size of a largest clique subdivision in a generic graph $G$, which is determined by both its average degree and its crux size.  As corollaries, we obtain
	\begin{itemize}
	    \item a characterisation of extremal graphs for which the square-root bound above is tight: they are essentially disjoint unions of  graphs having crux size linear in $d$;
	    
	    \item a unifying approach to find a clique subdivision of almost optimal size in graphs which do not contain a fixed bipartite graph as a subgraph;
	    
	    \item and that the clique subdivision size in random graphs $G(n,p)$ witnesses a dichotomy: when $p = \omega(n^{-1/2})$, the barrier is the space, while when $p=o( n^{-1/2})$, the bottleneck is the density.
	   	\end{itemize}
	\end{abstract}

	\section{Introduction}

	For a graph $H$, a subdivision of $H$ (or an $H$-subdivision) is a graph obtained by replacing each edge of $H$ by internally vertex-disjoint paths. 
	As edges are simply replaced by paths, a subdivision of $H$ has the same topological structure with $H$. This notion builds a bridge between graph theory and topology. Studies on the existence of certain subdivisions in a given graph $G$ provide deep understanding on various aspects of $G$.
	For example, the cornerstone theorem of Kuratowski~\cite{Kuratowski30} in 1930 completely characterises planar graphs by proving that graphs are planar if and only if they do not contain a subdivision of either $K_5$, the complete graph on five vertices, or $K_{3,3}$, the complete bipartite graph with three vertices in each class.

	What conditions on graphs $G$ guarantee an $H$-subdivision in them? 
 	A fundamental result of Mader \cite{Mader1967} in 1967 states that a large enough average degree always provides a desired subdivision. 
 	Namely, for every $t\in \mathbb{N}$, there exists a smallest integer $f(t)$ such that every graph $G$ with the average degree at least $f(t)$ contains a subdivision of $K_t$.
 	He further conjectured that $f(t)=O(t^2)$.
	This conjecture was verified in the 90s by Bollob\'{a}s and Thomason \cite{Bollobas1998} and independently by Koml\'os and Szemer\'edi \cite{Komlos1996}. In fact, $f(t)=\Theta(t^2)$; the lower bound was observed by Jung \cite{Jung1970} in 1970: consider the $n$-vertex graph which is a disjoint union of $5n/t^2$ copies of $K_{\frac{t^2}{10}, \frac{t^2}{10}}$. A clique subdivision must be embedded in a connected graph; this example, though may have arbitrary large order $n$, is essentially the same as one copy of $K_{\frac{t^2}{10}, \frac{t^2}{10}}$, which does not contain a $K_t$-subdivision. Indeed, at least ${t/2\choose 2}$ many edges are subdivided in any $K_t$-subdivision in $K_{\frac{t^2}{10}, \frac{t^2}{10}}$, which would require around $t^2/8$ vertices on one side. In other words, apart from the obvious ``degree constraint'' from the average degree, there is also some ``space constraint'' forbidding a $K_t$-subdivision.
	
	\subsection{Crux and subdivisions}
	From the extremal example above, it is then natural to wonder if $G$ does not structurally look like $K_{\frac{t^2}{10}, \frac{t^2}{10}}$, can we find a larger clique subdivision? Indeed, Mader \cite{Mader1999} conjectured that every $C_4$-free graph with average degree $d$ contains a subdivision of $K_{\Omega(d)}$. Towards this conjecture, K\"uhn and Osthus \cite{Kuehn2004} proved that such a graph contains a subdivision of $K_{\Omega(\frac{d}{\log^{12} d})}$. K\"uhn and Osthus \cite{Kuehn2002, Kuehn2006} also considered $K_{s,t}$-free graphs and Balogh, Liu and Sharifzadeh \cite{Balogh2015} considered $C_{2k}$-free graphs with $k\geq 3$. 
Recently, Mader's conjecture was fully resolved by Liu and Montgomery~\cite{Liu2017}. 
Furthermore, they proved that for every $t \geq s \geq 2$, there exists a constant $c=c(s, t)$ such that if $G$ is $K_{s,t}$-free and has the average degree $d$, then $G$ has a subdivision of a clique of order $cd^{s/2(s-1)}$.

Note that a $C_4$-free graph with the average degree at least $d$ must have at least $\Omega(d^2)$ vertices as the maximum number of edges of an $n$-vertex $C_4$-free graph is $O(n^{3/2})$, hence providing enough space to put a $K_{\Omega(d)}$-subdivision with $O(d^2)$ vertices. Similarly, the number $d^{s/2(s-1)}$ also matches with the conjectured extremal number of $K_{s,t}$. Thus, all these $H$-free conditions relax the ``space constraints''. This suggests that `the essential order' of the graph $G$, rather than structural $H$-freeness, is an important factor for the size of the largest clique subdivision. 

How do we define `the essential order' of a graph? Simply taking a largest component is not sufficient. We can add a few edges to the disjoint union of $K_{O(t^2),O(t^2)}$ to make it connected, but those added edges are not useful for finding a $K_t$-subdivision unless we add enough edges to significantly increase the average degree. Haslegrave, Hu, Kim, Liu, Luan and Wang~\cite{Haslegrave2021} recently introduced a notion of the crux, which captures the `essential order' of a graph. We write $d(G)$ for the average degree of $G$.
	
	\begin{defn}[Crux]
		Let $\alpha>0$ and $G$ be a graph. A subgraph  $H\subseteq G$ is an \emph{$\alpha$-crux} if $d(H)\ge \alpha \cdot d(G)$. Let $c_{\alpha}(G)$ be the order of a smallest $\alpha$-crux in $G$, that is:
		$$c_{\alpha}(G)=\min\{n: \exists H\subseteq G \mbox{ s.t. } |H|=n, d(H)\ge \alpha d(G)\}.$$
	\end{defn}
 We will write simply $c(G)$ when $\alpha=1/100$; the choice of $1/100$ here is not special and can be replaced with any small number. Roughly speaking, the crux of a graph is large when the edges are relatively uniformly distributed.
 
 Let us see some examples of graphs whose crux size is much larger than their average degree (see~\cite{Haslegrave2021} for more details): $(i)$ $K_{s,t}$-free graphs $G$ with $s,t\ge 2$, satisfy $c_\alpha(G)\ge \frac{\alpha^{s/(s-1)}}{2t}d(G)^{s/(s-1)}$; $(ii)$ a $\frac{d}{r}$-blow-up $G$ of a $d$-vertex $r$-regular expander graph for a sufficiently large constant $r$ satisfies $c(G)=\Omega(d^2)$ and $d(G)=d$; and $(iii)$ there are graphs for which there is an exponential gap between its crux size and average degree: a simple application of isoperimetry inequalities on hypercubes shows that for $d$-dimension hypercube $Q^d$, $c_\alpha(Q^d)\ge 2^{\alpha d}$.
 
 \medskip
 
 Our main result reads as follows. It implies in particular that the space constraint, measured by crux size, is a deciding factor for the size of the largest clique subdivision in a graph. 
			
		\begin{theorem}\label{main_thm}
		There exists an absolute constant $\beta>0$ such that the following is true.
		Let $G$ be a graph with $d(G)=d$. Then $G$ contains a $K_{\beta t/(\log\log t)^6}$-subdivision where 
		$$t=  \min\Big\{d, \sqrt{\frac{c(G)}{\log c(G) }}\Big\}.$$
	\end{theorem}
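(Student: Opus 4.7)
My approach combines the Koml\'os--Szemer\'edi sublinear expander framework with the Liu--Montgomery adjuster/unit toolkit for clique subdivisions, with the crux used to bound the order of the host expander. By definition of $c(G)$, there is a subgraph $G_0\subseteq G$ with $|V(G_0)|\le c(G)$ and $d(G_0)\ge d/100$. A standard Koml\'os--Szemer\'edi regularisation applied to $G_0$ produces a sublinear expander $H\subseteq G_0$ with $\delta(H)\ge d/200$ and $|V(H)|\le c(G)$. This is the crucial role of the crux: it shrinks the playground from $|V(G)|$ down to $c(G)$, which tightens the effective space budget for the subdivision.

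Inside $H$ I want to construct a $K_m$-subdivision with $m=\beta t/(\log\log t)^6$. The plan is the familiar two-step scheme: pick $m$ branch vertices $v_1,\dots,v_m$ in $H$ of degree $\Omega(d)$, then build internally disjoint paths connecting each pair $(v_i,v_j)$ one at a time. At each stage, let $F$ be the set of vertices already used; apply a short-path lemma in the sublinear expander $H\setminus F$ to join $v_i$ and $v_j$ by a path of length $\ell$. The degree constraint $\deg(v_i)\ge m$ imposes $m=O(d)$, while the space constraint $\binom{m}{2}\cdot\ell\le |V(H)|\le c(G)$ imposes $m=O(\sqrt{c(G)/\ell})$. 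Together these reproduce $m\le t$ provided the path length is $\ell=(\log c(G))\cdot(\log\log c(G))^{O(1)}$.

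The naive short-path lemma in Koml\'os--Szemer\'edi sublinear expanders only gives $\ell=(\log c(G))^{O(1)}$, so the main technical input is tightening this to polyloglog loss. For this I would use the Liu--Montgomery strategy of pre-building a collection of disjoint ``units'' --- small expander fragments with many exit vertices --- and stitching them into ``adjusters'' that serve as reusable highways for the connecting paths. Once the adjusters are in place inside $H$, individual paths cost only $\log c(G)\cdot\mathrm{polyloglog}$ vertices, and the flexibility of the adjusters lets the routing tolerate a forbidden set that is close to a constant fraction of $|V(H)|$.

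The main obstacle is that after $\Theta(m^2)$ path constructions the set $F$ becomes comparable to $|V(H)|$, so a one-shot expansion bound is too weak and the routing can stall. To push through, I would iteratively restrict to robust sub-expanders whose minimum degree degrades by only a $(\log\log t)^{O(1)}$ factor at each restriction, and verify that the pre-built adjusters survive and still supply enough exits for the remaining pairs. The number of restriction levels needed to keep the residual expansion usable is what contributes the exponent $6$ in $(\log\log t)^6$: each level costs a $(\log\log t)^{O(1)}$ factor in degree and expansion, and balancing these losses against the target clique size $m$ --- in a host whose order is tightly bounded by $c(G)$ --- is the most delicate part of the bookkeeping.
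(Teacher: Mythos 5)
Your back-of-envelope calculation ($m\approx\sqrt{c(G)/\ell}$ with $\ell\approx\log c(G)\cdot\mathrm{polyloglog}$) reproduces the right target, but the two load-bearing ingredients you name are either inverted or left unjustified, and the actual proof uses a mechanism you do not mention.

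First, the role of the crux is backwards. You present $c(G)$ as an \emph{upper} bound on the host: ``shrink the playground down to $c(G)$.'' But the crucial property of $c(G)$ is that it is a \emph{lower} bound on the order of any subgraph retaining roughly the original density. The paper does \emph{not} pass to a minimum crux; it applies Theorem~\ref{thm_robust_expander} to get an $(\eps,n_c/100)$-robust-expander $H$ with $d(H)\ge d/8$, and then observes $|H|\ge c(G)$ \emph{because} $d(H)/d>1/100$. This lower bound is what makes the space constraint $\binom{m}{2}\ell\le |H|$ feasible. More importantly, the minimality in the crux definition gives the key analytic input you miss: any $S$ with $|S|<c(G)/100$ has $d(G[S])<d/100$, and combined with the minimum degree in the expander this forces $|N(S)|\gtrsim|S|$ (Claim~\ref{cl: expansion in c100}). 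That linear small-set expansion --- not the crux as a size bound --- is the engine that shortens paths.

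Second, your claimed short-path mechanism (``pre-built adjusters act as reusable highways, so each path costs $\log c(G)\cdot\mathrm{polyloglog}$'') has no justification and is not how the bound is obtained. The Koml\'os--Szemer\'edi/Liu--Montgomery adjuster machinery does not by itself beat the $(\log n)^{3}$ barrier. The paper achieves $\log n\cdot(\log\log n)^{O(1)}$ path length by coordinating \emph{two different expansions}: the $(\eps,k)$-sublinear expansion for large sets, and the crux-induced linear expansion for small sets, with three nested expander layers at parameters $k=\eps d$, $k=d^2$, and $k=n_c/100$ so that the cutoff between ``small'' and ``large'' matches the regime in play (Lemmas~\ref{lem_dense}--\ref{lem_nc_expander}). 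The webs (not adjusters) serve a different purpose than you describe: their redundancy lets connecting paths pass through the interiors of other webs, since those interiors are too large to avoid outright. Your iterative-restriction idea, where each level costs a $(\log\log t)^{O(1)}$ factor in degree, is vague and does not correspond to the paper's argument --- the $(\log\log t)^6$ in the theorem arises from the web parameters $h_1,\dots,h_5$ and the $\rho^{-1}=\Theta(\log^2 K)$ factors with $K=\Theta(n_H/n_c)=\mathrm{polylog}$, not from a cascade of degree degradations.
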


Theorem~\ref{main_thm} asymptotically confirms a conjecture of Liu and Montgomery~\cite{Liu2017}. The bound above is optimal up to the multiplicative $(\log\log t)^6$ factor: the $d$-blowup of a $d$-vertex $O(1)$-regular expander satisfies $c(G)=\Theta(d^2)$ and the largest clique subdivision has order $d/\sqrt{\log d}$ (see \cite{Liu2017} for more details).

Our approach utilises the recently-developed sublinear expander theory. The notion of sublinear expander was first introduced by Koml\'os and Szemer\'{e}di \cite{Komlos1994, Komlos1996} in the 90s. There has been a series of advancements in the theory of sublinear expanders, resulting in a wide range of applications: for cycle embeddings, see~\cite{Fernandez2022,Haslegrave2021} and most notably~\cite{Liu2020} for the resolution of Erd\Ho s and Hajnal's odd cycle problem from 1966; for graph Ramsey goodness, see~\cite{Haslegrave2021b}; for an enumeration problem, see~\cite{Kim2017}; for minors, subdivisions and immersions, see~\cite{Haslegrave2021a,Liu2017,Liu2020a}; and for other sparse substructures, see~\cite{Fernandez2022a}.

Our proof makes use of the structures introduced in Liu and Montgomery's work~\cite{Liu2017} on Mader's conjecture on subdivisions and in Kim, Liu, Sharifzadeh and Staden's work~\cite{Kim2017} on Koml\'os's conjecture on Hamiltonian subsets, and expansion properties of sublinear expanders developed in Liu and Montgomery's work on Erd\Ho s-Hajnal's odd cycle problem~\cite{Liu2020}. We further employ an idea of iterative use of expanders from Haslegrave, Kim and Liu's study~\cite{Haslegrave2021a} on sparse minor embeddings to bootstrap an initial weaker bound to a near-optimal one by considering three nested layers of expanders with different expansion properties.

\begin{rmk}
Our main result is another illustration of the \emph{replacing average degree by crux} paradigm proposed in Haslegrave, Hu, Kim, Liu, Luan and Wang~\cite{Haslegrave2021}, which says roughly that for results about sparse graph embeddings, if the bound on the size of the substructure is depending on $d(G)$, then one might be able to replace it with the crux size $c(G)$ instead. Indeed, ignoring the unavoidable degree constraint, Theorem~\ref{main_thm} improves a clique subdivision of size in Bollob\'{a}s-Thomason~\cite{Bollobas1998} and Koml\'os-Szemer\'edi~\cite{Komlos1996} from $\sqrt{d(G)}$ to about $\sqrt{c(G)}$. We refer the readers to~\cite{Haslegrave2021} for more examples regarding the existence of long cycles on such a paradigm.
\end{rmk}

In the rest of this section, we illustrate three applications of Theorem~\ref{main_thm}, discuss an intimate connection between crux and the sublinear expander theory, and then mention some other topics and problems related to the crux.

\subsection{Characterisation of extremal graphs}
The first consequence of our main result is a structural characterisation of extremal graphs $G$ having the smallest possible clique subdivision size $\Theta(\sqrt{d(G)})$, showing that the \emph{only obstruction} to get a larger than usual clique subdivision is a small crux. In other words, if $c(G)=\omega(d)$, then one can embed a $K_t$-subdivision with $t = \omega(\sqrt{d})$. 
Theorem~\ref{main_thm} does not directly imply the following result, but along the way of proving it, we obtain this. 

\begin{theorem}\label{thm_omega_sqrt_d}
Given a graph $G$ with average degree $d$, if the largest clique subdivision has order $\Theta(\sqrt{d})$, then its crux size is linear in $d$, i.e. $c(G)=O(d)$.
\end{theorem}

Theorem~\ref{thm_omega_sqrt_d} implies that the extremal graphs are \emph{essentially disjoint unions of dense small graphs whose crux size is linear in their average degrees}. This can be viewed as an analogous result of Myers~\cite{Myers2002} who studied the extremal graphs for embedding clique minors.

We would like to mention a related question of Wood from the Barbados workshop 2020.

\begin{question}[Wood]\label{ques-Ktt-subdivision}
	For given $t\in \mathbb{N}$, does there exist $g(t)= o(t^2)$, such that every graph with the average degree at least $g(t)$ contains a subdivision of $K_{t,t}$? 
	\end{question}
	
This question asks whether the structure of $H$ is related to the average degree on $G$ guaranteeing an $H$-subdivision in it. 
However, dense random graphs provide a negative answer to this question. 
In fact, regardless of the structure of $H$, the quadratic bound cannot be improved for $H$-subdivision embedding as long as $H$ is a dense graph.

\begin{prop}\label{claim_t2_lowerbound}
For any $0<c\leq 1$, there exists $t_0$ such that the following holds for every $t>t_0$. There exists a graph $G$ with $d(G)\geq \frac{c^2 t^2}{1000}$ such that for any $t$-vertex graphs $H$ with $e(H) \geq c\binom{t}{2}$, $G$ does not contain an $H$-subdivision.
\end{prop}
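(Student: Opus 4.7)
The plan is to take $G$ to be a binomial random graph $G(n,p)$ with $p$ slightly below $c$ and with $np$ comfortably exceeding $c^2 t^2/1000$. The key observation is that in any $H$-subdivision, most of the $e(H)$ edges of $H$ must be realised as \emph{single} edges between the branch vertices, because $G$ will have too few vertices to accommodate many length-$\ge 2$ paths; but a typical $t$-subset of $V(G)$ in the chosen random model will not contain enough edges to support this.

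Concretely, I would set $p := c/3$ and $n := \lfloor c t^2 / 100 \rfloor$. Then $np \ge c^2 t^2 / 300$, so a standard Chernoff bound gives $d(G) \ge c^2 t^2 / 1000$ with high probability. The main step is to show, again by Chernoff together with a union bound over the $\binom{n}{t}$ choices of $t$-subsets, that with high probability every $B \subseteq V(G)$ with $|B| = t$ satisfies $e(G[B]) \le (2c/3) \binom{t}{2}$. This uses that $e(G[B]) \sim \mathrm{Bin}(\binom{t}{2}, p)$ has mean $(c/3)\binom{t}{2}$, and that the union-bound exponent $\log \binom{n}{t} = O(t \log t)$ is negligible compared with the tail exponent $\Theta(ct^2)$.

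Fix a graph $G$ for which both properties above hold, and suppose for contradiction that $G$ contains an $H$-subdivision for some $t$-vertex $H$ with $e(H) \ge c \binom{t}{2}$. Let $B$ be the set of branch vertices and $m$ the number of edges of $H$ realised as length-one paths. The remaining $e(H) - m$ paths each contribute at least one internal vertex, and all internal vertices are pairwise distinct, so the subdivision uses at least $t + (e(H) - m)$ vertices of $G$, forcing $m \ge e(H) + t - n$. Distinct length-one paths use distinct edges of $G[B]$, so $m \le e(G[B]) \le (2c/3) \binom{t}{2}$. Combining these,
\begin{equation*}
e(H) \;\le\; \frac{2c}{3}\binom{t}{2} + n - t \;\le\; \frac{2c}{3}\binom{t}{2} + \frac{c t^2}{100} \;<\; c \binom{t}{2}
\end{equation*}
for all sufficiently large $t$, contradicting $e(H) \ge c \binom{t}{2}$. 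No part of the argument is particularly difficult; the main care needed is the choice of constants, keeping $p$ strictly below $c/2$ so that a generic $t$-set cannot absorb too many length-one paths, while ensuring $np$ still meets the required average-degree bound.
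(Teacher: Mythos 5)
Your proof is correct, and it follows essentially the same route as the paper: take a binomial random graph $G(n,p)$ with $n=\Theta(ct^2)$ vertices and constant density $p=\Theta(c)$, use Chernoff and a union bound to show that every $t$-subset of $V(G)$ spans few edges, and then argue by a vertex count that an $H$-subdivision would need more internal vertices than $G$ has. The one place you diverge is a mild simplification: the paper first passes to a bipartite subgraph of $H$ with $\geq ct^2/5$ edges and then bounds the number of edges of $G$ between the two corresponding sets of core vertices, whereas you skip the bipartite reduction entirely and bound $e(G[B])$ over all $t$-subsets $B$ directly. This is cleaner, and it also makes the counting inequality $m \geq e(H)+t-n$ (which says at least this many paths must be single edges) more transparent. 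Your choice of constants ($p=c/3$, $n=ct^2/100$) is consistent: $np = c^2t^2/300 > c^2t^2/1000$, the Chernoff exponent $\Theta(ct^2)$ dominates $\log\binom{n}{t}=O(t\log t)$, and $\tfrac{2c}{3}\binom{t}{2}+\tfrac{ct^2}{100} < c\binom{t}{2}$ for large $t$, so there is no gap.
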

We need the Chernoff bound, which can be found, for example, in \cite{Alon2016}.
\begin{theorem*}[The Chernoff Bound]\label{lem_chernoff}
	Let $X_1, X_2, \cdots, X_n$ be i.i.d. Bernoulli random variables and let $X= \sum_{i=1}^n X_i$ and $\mu=\mathbb{E}[X]$. Then for every $\varepsilon \in (0, 1)$, $\mathbb{P}(|X-\mu| \geq \varepsilon \mu ) \leq 2\exp\left(-\frac{\varepsilon^2 \mu}{3}\right).$
\end{theorem*}
\begin{proof}[Proof of Proposition~\ref{claim_t2_lowerbound}]
As any such a $t$-vertex graph $H$ contains a bipartite subgraph with at least $\frac{ct^2}{5}$ edges, it suffices to show that a binomial random graph $G=G(\frac{c t^2}{100}, \frac{c}{4})$, with high probability, does not contain an $H$-subdivision for every bipartite $H$ with $e(H) \geq \frac{ct^2}{5}$ while having average degree $d(G)\geq \frac{c^2}{1000}t^2$.

Indeed, suppose $X, Y \subseteq V(G)$ are the sets of core vertices (which are the vertices of degree at least three in the subdivision) corresponding to the bipartition of $H$. 
Then as $|X|+|Y|=t$, the expected number of edges between $X$ and $Y$ is at most $\frac{c t^2}{16}$.
Then the Chernoff bound together with the union bound yields that with probability at least $1-\binom{ct^2/100}{t}2^t e^{-ct^2/10^5} = 1-o(1)$, the number of edges between any two sets $X$ and $Y$ with $|X|+|Y|=t$ is at most $\frac{ct^2}{12}$. 
However, in such a case, more than $\frac{ct^2}{10} $ pairs $(x,y)\in (X\times Y)$ must be connected by a path of length at least two in the subdivision of $H$ for any bipartite $H$ with $e(H) \geq \frac{ct^2}{5}$. This yields that the $H$-subdivision has at least $\frac{ct^2}{10}$ vertices, which is more than the number of vertices available. On the other hand, a simple Chernoff bound again implies that the average degree of $G$ is at least $\frac{c^2}{1000}t^2$ with probability $1-o(1)$.
\end{proof}

Thus, a sub-quadratic bound is not sufficient to obtain a copy of an $H$-subdivision for any dense graph $H$. This example shows that when we seek for an $H$-subdivision within $G$ for any dense choices of $H$, the important factor is again the `essential order' of $G$, rather than the structures of $H$.

\subsection{Graphs without a fixed bipartite graph}
	The next application provides the largest clique subdivision size, which is optimal up to a polylog-factor, in a graph without a copy of bipartite graph $H$. This generalises the result of Liu and Montgomery~\cite{Liu2017} on $K_{s,t}$-free graphs. We would like to remark that the proof of Liu and Montgomery makes heavy use of the structure of the forbidden graph $K_{s,t}$, hence their argument does not extend to general $H$-free graphs. Below, we write $x= \widetilde{\Omega}(y)$ if there exists positive constants $a,b$ such that $x \geq a y \log^{-b}y$, $x= \widetilde{O}(y)$ if there exists positive constants $a,b$ such that $x \leq a y \log^{b}y$ and $x=\widetilde{\Theta}(y)$ if $x= \widetilde{\Omega}(y)$ and $x= \widetilde{O}(y)$.
	\begin{cor}\label{cor1}
		Let $H$ be a bipartite graph with extremal number $\ex(n, H) = O(n^{1+\tau})$ for some $0<\tau<1$ and let $G$ be an $H$-free graph with average degree $d$. Then $G$ contains a $K_t$-subdivision where 
		$$ 
		t = \left\{\begin{array}{ll} 
 \widetilde{\Omega}(d^{\frac{1}{2\tau}})  & \text{ if } \tau > 1/2 \\
 \widetilde{\Omega}(d) & \text{ if } \tau \le 1/2.
 \end{array}\right.	$$
	\end{cor}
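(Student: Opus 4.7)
The plan is to deduce this corollary directly from Theorem~\ref{main_thm} by converting the $H$-freeness hypothesis into a lower bound on the crux $c(G)$. The key observation is that any subgraph $H' \subseteq G$ is automatically $H$-free, so if $d(H') \geq d/100$, then the Kővári–Sós–Turán-type bound forces $|V(H')|$ to be large. Specifically, writing $\ex(n,H) \leq C n^{1+\tau}$, we have
\[
\tfrac{d}{200}\, |V(H')| \;\leq\; \tfrac{1}{2}|V(H')|\, d(H') \;=\; e(H') \;\leq\; C|V(H')|^{1+\tau},
\]
which rearranges to $|V(H')| \geq \bigl(d/(200 C)\bigr)^{1/\tau} = \Omega(d^{1/\tau})$. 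Taking the minimum over all such $H'$ gives $c(G) = \Omega(d^{1/\tau})$.

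Next I would plug this into Theorem~\ref{main_thm}, whose guarantee is a $K_{\beta t^{\star}/(\log\log t^{\star})^6}$-subdivision with $t^{\star} = \min\bigl\{d,\, \sqrt{c(G)/\log c(G)}\bigr\}$. Using the crux bound from the previous paragraph,
\[
\sqrt{\tfrac{c(G)}{\log c(G)}} \;\geq\; \widetilde{\Omega}\!\left(d^{1/(2\tau)}\right).
\]
When $\tau > 1/2$ we have $1/(2\tau) < 1$, so this crux term is the binding one in the minimum and $t^{\star} = \widetilde{\Omega}(d^{1/(2\tau)})$. When $\tau \leq 1/2$, the crux term is already $\widetilde{\Omega}(d)$ (since $c(G) = \Omega(d^{1/\tau}) \geq \Omega(d^2)$), so the degree term $d$ dominates the minimum and $t^{\star} = d$. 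In either case the extra factor $1/(\log\log t^{\star})^6$ introduced by Theorem~\ref{main_thm} is absorbed into the $\widetilde{\Omega}$ notation, yielding the two stated bounds.

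There is essentially no obstacle here: Theorem~\ref{main_thm} has already done all the heavy lifting, and the only nontrivial step is the crux estimate, which is a one-line application of the extremal-number hypothesis to any dense subgraph of $G$. The only thing to be careful about is bookkeeping of the polylogarithmic losses — from the $\log c(G)$ inside the square root, from the $(\log\log t^{\star})^6$ factor in Theorem~\ref{main_thm}, and from $\log c(G) = O(\log d)$ for $\tau$ bounded away from $0$ — all of which are hidden by the $\widetilde{\Omega}$ convention introduced in the statement.
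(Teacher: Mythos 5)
Your proof is correct and follows essentially the same route as the paper: bound $c(G)$ from below by applying the extremal hypothesis to a smallest $\alpha$-crux (which inherits $H$-freeness), then plug $c(G)=\Omega(d^{1/\tau})$ into Theorem~\ref{main_thm} and split into the two cases according to which term wins the minimum.
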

	\begin{proof}
		Let $\alpha = 1/100$, and $F$ be a smallest $\alpha$-crux of $G$ of order $c(G)$.
		As $F$ is $H$-free, $e(F) = O(|F|^{1+\tau})$, hence  
		$$c(G) = |F| = \Omega(d(F)^{1/\tau})= \Omega(d(G)^{1/\tau}).$$
		Therefore, Theorem \ref{main_thm} implies that $G$ has a clique subdivision of size 
$\tilde{\Omega}(\min\{d, c(G)^{\frac{1}{2}} \})= \tilde{\Omega}(\min\{d, d^{\frac{1}{2\tau}} \}).$
	\end{proof}
The bound above is best possible up to the polylogarithmic factor if $\ex(n, H) = \Theta(n^{1+\tau})$.
To see this, let $G$ be an $n$-vertex bipartite $H$-free graph $G$ with $\Theta(n^{1+\tau})$ edges.
If $G$ has a $K_t$-subdivision, then at least $t/2$ core vertices are in the same part of a bipartition of $G$. For any two of them, a path connecting them uses at least one vertex of the other part of $G$ so ${t/2 \choose 2} \leq n$ and therefore $t = O(\sqrt{n}) = O(d^{1/2\tau})$.

We would like to again emphasise that the only effect of forbidding a fixed bipartite graph is to relax the space constraints. For example, by Theorem~\ref{main_thm}, the hypercube $Q^d$ has a clique subdivision of order $\widetilde{\Omega}(d)$ even though $Q^d$ has many 4-cycles.

\subsection{Dichotomy on Erd\Ho s-R\'enyi random graphs}
	 The last application studies subdivisions in Erd\Ho s-R\'enyi random graphs. While the size of the largest $K_t$-minor in a random graph is widely studied (for example \cite{Bollobas1980, Fountoulakis2008, Krivelevich2009}), not many results are known for clique subdivision. When $p \in (0, 1)$ is a constant, Bollob\'as and Catlin \cite{Bollobas1981} proved in 1981 that the largest clique subdivision of $G(n, p)$ is $(\sqrt{2/(1-p)}+o(1))\sqrt{n}$ with high probability (w.h.p.). 
     When $p=o(1/\sqrt{n})$ and $p>1/n$, Ajtai,  Koml\'{o}s and Szemer\'{e}di~\cite{Ajtai1979} proved that the size of largest clique subdivision of $G=G(n, p)$ is in $[(1-\varepsilon)\Delta(G), \Delta(G)]$.
	 
	We fill in the missing gap when $1/\sqrt{n}\le p\ll 1$. 

\begin{cor}\label{cor_random_graph}
	Suppose $1/n<p<1-\Omega(1)$.
    Then w.h.p., the largest $t$ that $G=G(n, p)$ has a $K_t$-subdivision is given by 
	\begin{align*}
	 t=\widetilde{\Theta}(\min\{np, \sqrt{n/(1-p)}\}).
	\end{align*}
\end{cor}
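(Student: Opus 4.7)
The plan is to establish matching upper and lower bounds on the size of the largest clique subdivision in $G=G(n,p)$.

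For the upper bound I identify two obstructions. The \emph{degree barrier}: every core vertex of a $K_t$-subdivision has degree at least $t-1$ in $G$, and Chernoff bounds give $\Delta(G)=(1+o(1))np$ w.h.p.\ when $p=\omega(\log n/n)$, so $t\le(1+o(1))np$. The \emph{space barrier}: a union bound combined with Chernoff shows that w.h.p.\ every $t$-vertex subset of $G$ spans at most $(1+o(1))p\binom{t}{2}$ edges, so at least $(1-p-o(1))\binom{t}{2}$ of the $\binom{t}{2}$ paths in a $K_t$-subdivision must have length $\ge 2$. Since these use internally vertex-disjoint paths, we need $(1-p-o(1))\binom{t}{2}\le n$, which gives $t=O(\sqrt{n})$ in the regime where $p$ is bounded away from~$1$ (the only non-trivial regime, as $\min\{np,\sqrt{n}\}=\sqrt{n}$ otherwise). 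Together these imply $t=O(\min\{np,\sqrt{n}\})$.

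For the lower bound I apply Theorem~\ref{main_thm}, which reduces the task to showing that w.h.p.\ both (a) $d(G)=(1+o(1))np$ and (b) $c(G)=\Omega(n)$ hold. Part (a) is a standard concentration argument. For part (b) I aim to show that w.h.p.\ no subgraph of $G$ on $s\le n/C$ vertices (for a sufficiently large constant $C$) has average degree at least $np/100$. When $s<np/100+1$ this is automatic, since $\binom{s}{2}<snp/200$ makes the event impossible. For the remaining range, a first-moment estimate bounds the expected number of subsets $S$ of size $s$ with $e(G[S])\ge snp/200$ by
\[
\binom{n}{s}\binom{\binom{s}{2}}{\lceil snp/200\rceil}p^{\lceil snp/200\rceil}\le\Bigl(\tfrac{en}{s}\Bigr)^{s}\Bigl(\tfrac{100es}{n}\Bigr)^{snp/200}.
\]
For $s\le n/C$ with $C$ large, $(100es/n)^{snp/200}\le e^{-snp/100}$, which dominates $(en/s)^{s}\le e^{2s\log n}$ since $np=\omega(\log n)$. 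Summing over $s$ gives $o(1)$, so w.h.p.\ $c(G)\ge n/C=\Omega(n)$.

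Combining (a) and (b), Theorem~\ref{main_thm} produces a $K_t$-subdivision of order
$t=\tilde{\Omega}(\min\{np,\sqrt{c(G)/\log c(G)}\})=\tilde{\Omega}(\min\{np,\sqrt{n}\})$, matching the upper bound up to polylogarithmic factors. The main obstacle is the crux lower bound (b): one must verify the first-moment calculation carries uniformly through the full range of $s$, separating the tiny-$s$ regime (where density is impossible by pigeonhole) from the larger-$s$ regime (where the hypothesis $p=\omega(\log n/n)$ is precisely what forces $np/\log n\to\infty$ and makes the exponential factor dominate the entropy term).
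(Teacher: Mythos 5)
Your lower bound argument --- deducing $c(G)=\Omega(n)$ from a first-moment/union bound over subsets of size $s\le n/C$, after splitting off the trivially small range $s\le np/100+1$ --- is essentially the paper's own proof. The paper phrases the same estimate via Chernoff, $\mathbb{P}\bigl(e(G[S])\ge nmp/400\bigr)\le\exp(-nmp/10^6)$, rather than your explicit $\binom{\binom{s}{2}}{k}p^k$ bound, but the two are interchangeable; in both cases the hypothesis $np=\omega(\log n)$ is exactly what lets the exponential factor beat the $\binom{n}{s}\le(en/s)^s$ entropy term. Plugging $d(G)=(1\pm o(1))np$ and $c(G)=\Omega(n)$ into Theorem~\ref{main_thm} then gives the lower bound, as you say.

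Where you diverge is the upper bound. The paper cites Bollob\'as--Catlin (for the space barrier at constant $p$) together with the degree condition; you give a self-contained degree-barrier/space-barrier argument, which is more elementary and arguably more illuminating. One caution on the space barrier: the asserted concentration, that w.h.p.\ \emph{every} $t$-vertex subset spans at most $(1+o(1))p\binom{t}{2}$ edges, does not survive the union bound throughout the relevant range. For $t=\Theta(\sqrt n)$ one needs $\varepsilon^2 pt\gg\log n$, which fails when $p$ is near the bottom of the space-barrier regime, say $n^{-1/2}\le p\lesssim n^{-1/2}\log n$. The fix is that $(1+o(1))$-precision is not needed: since you only invoke the space barrier when $p$ is bounded away from $1$, it suffices that no $t$-subset has density exceeding, say, $1-\delta/2$ where $p\le 1-\delta$, and this large-deviation event has probability $\exp(-\Theta(\delta^2 t^2))=\exp(-\Theta(n))$, which dominates $\binom{n}{t}=\exp(O(\sqrt n\log n))$ uniformly. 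With that adjustment your argument is complete, and gives the additional benefit over the paper's of not requiring the reader to patch Bollob\'as--Catlin (stated for constant $p$) onto the full range of $p$ via a monotone-coupling argument.
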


Corollary~\ref{cor_random_graph} implies an interesting dichotomy on clique subdivision size in $G(n,p)$ above and below the density $1/\sqrt{n}$: when $p =\omega(n^{-1/2})$, then it is limited solely by the space constraints, while when $p = o(n^{-1/2})$, the degree constraint is the bottleneck.


\begin{proof}[Proof of Corollary~\ref{cor_random_graph}]
 By Chernoff's bound, $d(G) \in (\frac{np}{2}, \frac{3np}{2})$ with high probability. Therefore, the upper bound can be obtained from the theorem of Bollob\'as and Catlin~\cite{Bollobas1981}, and the degree condition.
 
 For the lower bound, the case when $p=o(1/\sqrt{n})$ is covered by the result of Ajtai,  Koml\'{o}s and Szemer\'{e}di~\cite{Ajtai1979}. It is then sufficient to show that $c(G) = \Omega(n)$ with probability $1-o(1)$. Let $S$ be a set of vertices of size $m$ for some $np/200 \leq m \leq n/1000$. Then the expected number of edges in $G[S]$ is $p{m \choose 2} \leq nmp/2000$. Thus, by Chernoff's bound, we have 
 \begin{align*}
  \mathbb{P}(e(G[S]) \geq nmp/400) \leq \exp(-nmp/10^6)
 \end{align*}
 So, by the union bound, the probability that there exists a set $S$ of size $m$ such that $d(G[S]) \geq np/200$ is at most ${n \choose m} \exp(-nmp/10^6) =o(1/n)$ as $p = \omega(\frac{\log n}{n})$. By the union bound for all $m$, with probability $1-o(1)$, for every set of vertices $S \subseteq V(G)$ of size in between $np/200$ and $n/1000$, $d(G[S]) <d(G)/100$. 
 If $|S|<np/200$, then $d(G[S]) \leq |S| < d(G)/100$ so we have $c(G) \geq n/1000$ with probability $1-o(1)$.
\end{proof}

\subsection{Crux and sublinear expanders}
In this subsection, we discuss a deep connection between the notion of crux and the sublinear expander theory. 
That is, the crux size of a graph provides a lower bound on the size of the sublinear expander subgraphs it contains. Apart from capturing the essential order of a graph, this connection is the second main motivation for introducing the notion of the crux.

We defer the formal definition of sublinear expanders and their properties to Section~\ref{sec: sublinear-expander}. Roughly speaking a graph is a sublinear expander if most of its vertex subsets expand almost linearly. The starting point of the sublinear expander theory is the following result proved by Koml\'os and Szemer\'{e}di, see Theorem~\ref{thm_robust_expander} for the formal statement.

\begin{theorem*}[Existence of sublinear expander subgraph]
    Every graph $G$ contains a sublinear expander subgraph $H$ retaining almost the same average degree: $d(H)\ge (1-o(1))d(G)$.
\end{theorem*}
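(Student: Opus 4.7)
The plan is to adapt the classical ``bad set'' iteration of Koml\'os and Szemer\'edi. First, fix a slowly decaying expansion function $\varepsilon(x) = \varepsilon_1/\log^2(Cx)$ for suitable constants $\varepsilon_1, C > 0$, so that a subgraph $H$ qualifies as a sublinear expander precisely when every subset $U\subseteq V(H)$ with $|U|\le |V(H)|/2$ satisfies $|N_H(U)|\ge \varepsilon(|U|)|U|$.

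I would proceed by iterative refinement. Set $G_0 := G$; at step $i$, if $G_i$ is already a sublinear expander, stop and output $H := G_i$. Otherwise pick a witness set $S_i \subseteq V(G_i)$ with $|S_i| \le |V(G_i)|/2$ and $|N_{G_i}(S_i)| < \varepsilon(|S_i|)|S_i|$, and replace $G_i$ by whichever of the two induced subgraphs $G_i[S_i \cup N_{G_i}(S_i)]$ or $G_i \setminus S_i$ maximises a suitably chosen potential $\Phi$, for example
\[
\Phi(G') \;:=\; e(G') \;-\; \lambda \cdot d(G)\cdot \frac{|V(G')|}{\log |V(G')|},
\]
for a small constant $\lambda > 0$. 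A short edge-counting computation, exploiting the identity $|V(G_i[S_i\cup N(S_i)])| + |V(G_i\setminus S_i)| = |V(G_i)| + |N_{G_i}(S_i)|$ together with the smallness of $|N_{G_i}(S_i)|$, shows that $\Phi$ does not decrease at any step.

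Since the vertex count strictly decreases, the procedure terminates and returns a sublinear expander $H$. The invariant $\Phi(H) \ge \Phi(G)$, combined with the fact that the correction term $\lambda d(G)|V|/\log|V|$ is a lower-order perturbation of $e(G) \sim d(G)|V|/2$, translates into $d(H) \ge (1-o(1))\, d(G)$.

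The principal obstacle is the joint calibration of the expansion rate $\varepsilon(\cdot)$ and the potential $\Phi$: $\varepsilon$ must decay slowly enough that the resulting expansion conclusion genuinely supports the super-polylogarithmic neighbourhood growth that is needed in all downstream embedding arguments, yet quickly enough that the tiny boundary $|N_{G_i}(S_i)|$ of each bad set is absorbed by the $1/\log|V|$-weight in $\Phi$ so that the cumulative loss telescopes. The $1/\log^2$ decay rate for $\varepsilon$ is the sweet spot that makes these two constraints compatible, and verifying this balance is the technical heart of the argument.
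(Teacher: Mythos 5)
The paper does not actually prove this statement: it is quoted as background (Theorem~\ref{thm_robust_expander}) with a citation to Koml\'os--Szemer\'edi and Haslegrave--Kim--Liu, and the proof lives in those references. So the comparison here is against the classical Koml\'os--Szemer\'edi argument rather than a proof in this paper.

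Your outline has the right shape (extremal subgraph via a potential that penalises vertex count slightly more than proportionally), but the key claim --- that ``$\Phi$ does not decrease at any step'' --- is not justified, and with the additive potential $\Phi(G') = e(G') - \lambda d(G)\,|V(G')|/\log|V(G')|$ it is in fact false. Consider a witness $X$ with $|X|\approx |V(G_i)|/2$, very small $|N(X)|$, and the edges splitting roughly evenly between $A=G_i[X\cup N(X)]$ and $B=G_i\setminus X$. Then $e(A),e(B)\approx e(G_i)/2$, but the correction terms $g(|A|)+g(|B|)$ (with $g(x)=x/\log x$) exceed $g(|V(G_i)|)$ by only $\Theta(|V(G_i)|/\log|V(G_i)|)$, which is smaller than $e(G_i)/2\approx d(G_i)|V(G_i)|/4$ by a $\log$ factor. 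Hence both $\Phi(A)$ and $\Phi(B)$ drop well below $\Phi(G_i)$, and the invariant breaks at the very first step. The same computation also ruins the ``global maximiser'' version of the argument with this potential, because superadditivity of $g$ runs the inequality in the wrong direction.

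The Koml\'os--Szemer\'edi proof avoids this by using a \emph{multiplicative} potential of the form $f(H) = e(H)/\bigl(|V(H)|\,\lambda(|V(H)|)\bigr)$ where $\lambda(x)=1+2\int^x \rho(z)/z\,dz$ is a \emph{bounded} (indeed $\lambda\le 5/4$, forced by the hypothesis $\int\rho(z)/z\,dz<1/8$) slowly varying function, rather than a $1/\log$-correction with an unbounded dynamic range. One takes $H$ maximising $f$ and argues via the mediant inequality: from $e(A)+e(B)\ge e(H)$ and $|A|\lambda(|A|)+|B|\lambda(|B|)<|V(H)|\lambda(|V(H)|)$ (this is where the specific $\lambda$ and the smallness of $|N(X)|$ enter), one of $f(A),f(B)$ must exceed $f(H)$, a contradiction. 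Boundedness of $\lambda$ is what makes $f(H)\ge f(G)$ translate into $d(H)\ge (1-o(1))d(G)$; your $1/\log|V|$ weight cannot play that role. To repair the proposal you would need to replace the additive penalty by a bounded multiplicative one tied to $\int\rho/z$, and replace ``$\Phi$ is monotone along the iteration'' by the mediant-style argument at a global extremiser.
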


The main drawback of this result is that the expander subgraph could be much smaller than the original graph: consider e.g. again the disjoint union of many small dense graphs. This has caused many technical difficulties in previous work using a sublinear expander. Denote by $\sigma(G)$ the order of largest sublinear expander subgraphs in $G$. It is thus important to get a hold on $\sigma(G)$. As $\alpha$-crux is the minimum order among all subgraphs retaining $\alpha$-fraction of the average degree, we infer the following lower bound on $\sigma(G)$:
$$\sigma(G)\ge c_{1-o(1)}(G).$$
This lower bound was utilised e.g. in~\cite{Haslegrave2021} in studying the circumference of subgraphs of hypercubes and its bond percolations at the supercritical regime.

This connection of crux with sublinear expander will likely lead to further interesting results; it plays also an important role in our proof as most of the time we obtain bounds on clique subdivisions in terms of $\sigma(G)$.

\subsection{Crux and small sets expansions}\label{sec:crux-SSE}
There is yet another intriguing connection between the crux and expansions of small sets. Consider throughout this subsection an $n$-vertex $d$-regular graph $G$, and define the \emph{expansion profile of $G$ at $\delta>0$} to be
 $$\varphi_{\delta}(G)=\min_{|S|\le \delta n}\frac{e(S,S^c)}{d|S|}.$$
The well-known \emph{Small Set Expansion Problem} in Complexity Theory is to decide if it is NP-hard to distinguish between a graph with a small subset of vertices whose edge expansion is almost zero, i.e.~$\varphi_{\delta}(G)\le\eps$, and one in which all small subsets of vertices have expansion almost one, i.e.~$\varphi_{\delta}(G)\ge 1-\eps$. We refer the readers to the work of Arora, Barak and Steurer~\cite{Arora2015} on their breakthrough subexponential algorithm for this problem.

To see the connection to the crux, recall that by the minimality in the definition of an $\eps$-crux, for any set $S$ with $|S|< c_{\eps}(G)$, $d(G[S])<\eps d$, and so $\frac{e(S,S^c)}{d|S|} = \frac{d|S| - e(G[S])}{d|S|}> 1-\eps$ by the $d$-regularity. We then infer that
$$\varphi_{\frac{c_{\eps}(G)-1}{n}}(G)> 1-\eps.$$
Thus, if $G$ has linear size $\eps$-crux $c_{\eps}(G)$, then it is a small set expander.

While the Small Set Expander Problem remains open, we shall see that it is NP-hard to decide the crux size of a graph.
We note that deciding whether a given graph $G$ has a clique of size $k$ for a given $k$, called $\textsc{clique}$, is one of the Karp's 21 NP-complete problems~\cite{Karp1972}
\begin{prop}
		For fixed $0<\eps<1$, deciding whether $c_{\eps}(G) \leq k$ or not for a given graph $G$ and an integer $k$ is NP-hard.
\end{prop}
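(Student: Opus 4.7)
I would establish NP-hardness by reducing from the classical CLIQUE problem. Given an instance $(G', k)$ of CLIQUE with $k\ge 4$ (the cases $k\le 3$ being polynomial-time), the task is to construct in polynomial time a graph $G$ such that $c_\eps(G)\le k$ if and only if $G'$ contains a clique of size $k$.

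Construction: let $G$ be the vertex-disjoint union of $G'$, a complete bipartite graph $K_{a,a}$, and $M$ isolated vertices, where $a$ is a suitably large integer polynomial in $|V(G')|, k, 1/\eps$, and $M$ is a non-negative integer chosen so that
$$ \eps \cdot d(G) \in \Bigl(k-1-\tfrac{2}{k},\; k-1\Bigr]. $$
Picking $a$ large enough that $a^2$ dominates $e(G')$, the spacing of the achievable values $d(G)=2(e(G')+a^2)/(|V(G')|+2a+M)$ as $M$ varies is much smaller than the window width $2/(k\eps)$, so taking $M:=\lceil 2\eps(e(G')+a^2)/(k-1)\rceil - |V(G')| - 2a$ lands $\eps d(G)$ inside the prescribed interval.

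Correctness. If $G'$ contains $K_k$, then this $K_k$ is a subgraph of $G$ on $k$ vertices with density $k-1\ge \eps d(G)$, so $c_\eps(G)\le k$. Conversely, assume $H\subseteq G$ has $|H|\le k$ and $d(H)\ge \eps d(G)>k-1-\tfrac{2}{k}$. Since $d(H)\le |H|-1$, this forces $|H|=k$. Let $j=|V(H)\cap V(G')|$ and $\ell=k-j$ be the contribution from $V(K_{a,a})$ (any isolated vertices of $G$ lying in $H$ carry no edges and may be discarded). Since $K_{a,a}$ is bipartite,
$$ e(H) \le \binom{j}{2} + \bigl\lfloor\tfrac{\ell}{2}\bigr\rfloor\bigl\lceil\tfrac{\ell}{2}\bigr\rceil \le \frac{j(j-1)}{2} + \frac{\ell^2}{4}, $$
so $d(H) \le f(\ell)/k$ where $f(\ell):=j(j-1)+\ell^2/2$ and $j=k-\ell$. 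The function $f$ is convex in $\ell$, so on $\ell\in\{1,\dots,k\}$ its maximum is attained at an endpoint: $f(1)=k^2-3k+\tfrac52$ and $f(k)=k^2/2$, both of which are at most $k^2-k-2$ whenever $k\ge 4$. Hence $d(H)\le k-1-2/k$ in every case with $\ell\ge 1$, contradicting the assumption. Therefore $\ell=0$, i.e., $H\subseteq V(G')$, and then $d(H)>k-1-2/k$ with $|H|=k$ forces $e(H)\ge \binom{k}{2}$, so $H$ is a $k$-clique of $G'$.

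The main point requiring care is the tightness of the upper endpoint $k-1-\tfrac{2}{k}$: replacing it by a cruder bound like $k-2$ would fail because a near-clique on $k$ vertices missing one edge would still satisfy the density condition without being a true clique. Once this tight window is fixed, the two endpoint computations $f(1),f(k)\le k^2-k-2$ and the polynomial bookkeeping for $a$ and $M$ are routine.
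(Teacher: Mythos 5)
Your reduction is correct and proves the statement. The key observations — that for $d_0\in(k-1-2/k,\,k-1]$ any $H$ with $d(H)\geq d_0$ has $|H|\geq k$, and that a $k$-vertex $H$ with $d(H)>k-1-2/k$ must be a $k$-clique — are the same pivot as the paper's argument, which uses the slightly tighter window $(k-1-1/k,\,k-1]$. Where you genuinely diverge is the construction that tunes $\eps d(G)$ into the target window. The paper preserves $\omega(G)=\omega(G')$ exactly: in the high-density case it takes disjoint copies of $G'$ plus isolated vertices, and in the low-density case it first pumps up the degree by adding a ``matching blow-up'' between copies of $G'$ (a triangle-free gadget) and then falls back to the first case. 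This requires a case split and a verification that no spurious triangles are created. Your construction --- disjoint union of $G'$, a single $K_{a,a}$, and isolated vertices --- is one uniform recipe: $K_{a,a}$ supplies arbitrarily many edges, the isolated vertices fine-tune the density downward, and you never need $\omega(G)=\omega(G')$ because you show directly (via the convexity bound on $f(\ell)=(k-\ell)(k-\ell-1)+\ell^2/2$) that no $k$-set mixing $G'$-vertices with $K_{a,a}$-vertices can be dense enough. The price for this simplification is that your argument needs $k\geq 4$ (so $f(k)=k^2/2\leq k^2-k-2$), which is harmless since CLIQUE is still NP-hard there. Two small points you should nail down in a final write-up: (i) the isolated vertices cannot actually occur in a size-$k$ crux, because discarding one would produce a set of size $k-1$ with density exceeding $k-1-2/k>k-2$, which is impossible --- this is what justifies writing $j+\ell=k$; and (ii) the step-size estimate for the $M$-tuning needs $e(G')+a^2\gtrsim k(k-1)^2/\eps$, which is exactly the ``$a$ polynomial in $|V(G')|,k,1/\eps$'' you assert, but it is worth stating the explicit inequality once.
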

\begin{proof}
		We will give a reduction from $\textsc{clique}$. More precisely, for a given graph $G$ and an integer $k \geq 3$, we construct a graph $G'$ in polynomial-time such that $G$ has a $k$-clique if and only if $c_\eps(G') \leq k$. Hence, the problem of finding a $k$-clique in $G$ is reducible in polynomial time to the problem of deciding $c_{\eps}(G)\leq k$, implying that this problem is NP-hard.
		
		First observe that for a graph $G$ and $d_0 \in (k-1-\frac{1}{k}, k-1]$, $\min\{|H| \mid H \subseteq G, d(H) \geq d_0\} $ cannot be strictly smaller than $k$. Thus, $G$ has a $k$-clique if and only if $\min\{|H| \mid H \subseteq G, d(H) \geq d_0\} \leq k$.
		
		Now, let $(G, k)$ be an input instance of $\textsc{clique}$ and let $d=d(G)$, $n=|G|$.
		By the previous observation, it is sufficient to construct a graph $G'$ satisfying $\omega(G)=\omega(G')$ and $d(G') \in \frac{1}{\eps} \cdot(k-1-\frac{1}{k}, k-1]$ in polynomial-time. 
		Assume that $3 \leq k \leq n$, $G$ is nonempty, and $d \notin \frac{1}{\eps} \cdot (k-1-1/k, k-1]$.

		Case 1: If $d > \frac{k-1}{\eps}$, let $G'$ be a disjoint union of $m_1=\lceil \frac{kd}{n} \rceil$ copies of $G$ and $m_2$ isolated vertices where $m_2$ is the smallest integer such that $\frac{m_1nd}{m_1n+m_2}  \leq \frac{k-1}{\eps}$.
		Then $m_1 \leq O(n^2)$ and $0< m_2 \leq m_1nd \leq O(n^4)$.
		Also, we have $$m_1nd\left(\frac{1}{m_1n+m_2-1}-\frac{1}{m_1n+m_2}\right) = \frac{m_1nd}{(m_1n+m_2-1)(m_1n+m_2)} \leq 
		\frac{d}{m_1n} \leq \frac{1}{k}.$$
		Thus, $ \frac{k-1-1/k}{\eps}  < \frac{k-1}{\eps}-\frac{1}{k} \leq \frac{m_1nd}{m_1n+m_2}$.
		As $d(G')=\frac{m_1nd}{m_1n+m_2}$, we have $d(G') \in \frac{1}{\eps}\cdot (k-1-\frac{1}{k}, k-1]$.
		Thus, $c_{\eps}(G') \leq k$ if and only if $G$ has a $k$-clique and clearly $\omega(G) = \omega(G')$.

		Case 2: If $d \leq \frac{k-1-1/k}{\eps}$, let $m$ be the smallest integer such that $d+m \geq \frac{k-1}{\eps}$.
		Note that $m = O(n)$.
		Construct a graph $G''$ as follows. Let $G_1, G_2, \dots, G_{2m}$ be vertex-disjoint copies of $G$.
		For each $1 \leq i \leq m$ and $m+1 \leq j \leq 2m$, add a complete matching between $G_i$ and $G_j$.
		Then we have $d(G'')=d+m \geq \frac{k-1}{\eps}$.
		Also, every clique of $G''$ of size at least $3$ should be contained in one copy of $G$ so $\omega(G) = \omega(G'')$ unless $G$ is empty.
		If $d(G'') = \frac{k-1}{\eps}$, then we can take $G'=G''$.
		Otherwise, follow the process of Case 1 with $G''$ instead of $G$ to obtain desired $G'$. Hence, this shows that determining if $c_{\eps}(G')\leq k$ is as difficult as determining if $\omega(G)=k$.
	\end{proof}

	\medskip
	
	\noindent\textbf{Organisation.} The rest of the paper is organised as follows. In Section~\ref{sec: sublinear-expander} we introduce sublinear expanders. Then in Section~\ref{sec_proof}, we give an outline of the proof of Theorem~\ref{main_thm}, packaging the main steps into Lemmas~\ref{lem_dense}--\ref{lem_nc_expander}; and present the proof sketches for these main lemmas and derive Theorem~\ref{thm_omega_sqrt_d}. In Section \ref{sec_lemmas}, we collect technical lemmas. The proofs of the main lemmas are given in Sections \ref{sec_ed_expander}, \ref{sec_d2_expander}, and \ref{sec_nc_expander}. Concluding remarks are given in Section \ref{sec_conclusion}.

\section{Sublinear expanders}\label{sec: sublinear-expander}
A main tool we use in this paper is the sublinear expander notion introduced by Koml\'os and Szemer\'{e}di \cite{Komlos1994, Komlos1996}. Let $N^i_G(X)$ be the set of vertices which are distance exactly $i$ from $X$. In particular, $N^0_G(X)=X$. We write $N_G(X)$ to denote $N^1_G(X)$ and we write $B^i_G(X) = \bigcup_{j\leq i}N^i_G(X)$. For given $\eps, k$, we define $\rho(x)=\rho(x, \eps, k)$ as 
	\begin{align*}
	\rho(x)= 
	\begin{cases}
	0 & \text{if } x<\frac{k}{5} \\
	\frac{\eps}{\log^2(15x/k)} & \text{if } x \geq \frac{k}{5}
	\end{cases}
	\end{align*}
Note that $\rho(x)$ is a decreasing function and $x \rho(x)$ is an increasing function for $x \geq \frac{k}{2}$. 
Koml\'os and Szemer\'edi introduced the notion of $(\eps,k)$-expander, which is a graph in which every set of appropriate size has not too small external neighbourhood. 
Haslegrave, Kim and Liu \cite{Haslegrave2021a} slightly generalised this notion to a robust version. 
Roughly speaking, a graph $G$ is a robust-expander if every set of appropriate size has not too small external neighbourhood even after deleting a small number of vertices and edges.
For an edge set $F\subseteq E(G)$, we write $G\setminus F$ to denote the graph with the vertex set $V(G)$ and the edge set $E(G)\setminus F$.
	
	\begin{defn}[\cite{Haslegrave2021a}]
		For $\eps>0, k>0$, a graph $G$ is \emph{$(\eps, k)$-robust-expander} if for every subset $X \subseteq V(G)$ of size $\frac{k}{2} \leq |X| \leq \frac{|V(G)|}{2}$ and an edge set $F \subseteq E(G)$ with $|F| \leq d(G) \rho(|X|)|X|$, we have $|N_{G - F}(X)| \geq \rho(|X|)|X|$.
	\end{defn}

We note that a different notion of ``robust-expander'' was also used by K\"{u}hn, Osthus and Treglown~\cite{Kuhn2010} to define a different type of expansion that the number of vertices that has `many' neighbors in $X$ is large.  

This notion of the sublinear expander is very useful in the following two aspects. 
	\begin{itemize}
		\item Every graph contains a robust-expander subgraph with almost the same average degree.
		\item This  provides a short connection between any two large sets while avoiding a relatively small set of vertices and edges.
	\end{itemize}
These two aspects are captured in the following two results. For a vertex set $W$, we write $G-W$ to denote the induced subgraph $G[V(G)\setminus W]$.

	\begin{theorem}\label{thm_robust_expander}{\cite{Haslegrave2021a,Komlos1994,Komlos1996}}
		If $0<\eps<\frac{1}{8 \log 3}$ and $k\geq 1$ then every graph $G$ contains an $(\eps, k)$-robust-expander $H$ with $d(H) \geq d(G)/2$ and $\delta(H) \geq d(H)/2$ as a subgraph.
	\end{theorem}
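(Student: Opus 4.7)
The plan is to carry out the classical weighted-density extremal argument of Komlós--Szemerédi~\cite{Komlos1994,Komlos1996}, with the edge-robust refinement of Haslegrave, Kim and Liu~\cite{Haslegrave2021a}. The strategy is to choose a single subgraph $H \subseteq G$ by an extremal property, and then read off all three conclusions (the average degree bound, the minimum degree bound, and the robust-expansion property) simultaneously from that extremality.

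Concretely, I would introduce an increasing weight function $w:\mathbb{N}\to\mathbb{R}_{>0}$ of the form $w(n)=\prod_{j=\lceil k/5\rceil}^{n-1}(1-c\rho(j))^{-1}$ for a suitable constant $c>0$. The hypothesis $\int_1^{\infty}\rho(x)/x\,dx<1/8$ guarantees that $w(n)/w(m)\le 2$ for all $k\le m\le n$, so the weight moves only within a bounded multiplicative range. Pick $H\subseteq G$ maximising $\Phi(H):=e(H)/w(|H|)$. Comparing $\Phi(H)\ge\Phi(G)$ with $|H|\le|G|$ immediately gives $d(H)\ge d(G)/2$. For the minimum-degree bound, if some vertex $v\in V(H)$ had $\deg_H(v)<d(H)/2$, then $e(H-v)/w(|H|-1)>e(H)/w(|H|)$: the loss of fewer than $d(H)/2$ edges is more than compensated by the drop $w(|H|-1)/w(|H|)=1-c\rho(|H|-1)$, contradicting extremality.

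The heart of the argument is verifying the robust-expansion property. Suppose for contradiction that some $X\subseteq V(H)$ with $k/2\le|X|\le|H|/2$ and some $F\subseteq E(H)$ with $|F|\le d(H)\rho(|X|)|X|$ witness $|N_{H-F}(X)|<\rho(|X|)|X|$; choose such a witness with $|X|$ minimum. Set $B:=X\cup N_{H-F}(X)$, so $|B|<(1+\rho(|X|))|X|$. By the very definition of $N_{H-F}(X)$, every edge from $X$ leaving $B$ must lie in $F$, so at most $|F|\le d(H)\rho(|X|)|X|$ edges incident to $X$ are lost when we pass from $H$ to $H[B]$. The delicate point is to bound the edges lost between $B\setminus X$ and $V(H)\setminus B$; this is handled by using the minimality of $|X|$, which forces $H[X]$ itself to already carry most of the edges incident to $X$ and hence keeps the ``wasted'' contribution of $B\setminus X$ small. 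Adding these estimates yields $e(H[B])\ge e(H)(1-O(\rho(|X|)))$, and comparing with $w(|B|)/w(|X|)\le 1+O(\rho(|X|))$ produces $\Phi(H[B])>\Phi(H)$, contradicting extremality.

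The main obstacle is precisely this last accounting step: one has to make the ratio between edge loss and weight gain quantitatively strict enough to overpower the factor $(1-c\rho)^{-1}$ built into $w$. This is where the full strength of the integral bound $\int \rho(x)/x\,dx<1/8$ enters, together with a careful choice of the constant $c$ and the use of the minimum-size witness $X$. Once this inequality is pinned down, the three conclusions fall out of the single extremal choice without any further iteration.
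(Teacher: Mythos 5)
The overall shape of your argument---pick an extremal subgraph with respect to a weighted density, read off the degree bounds, then derive expansion by contradiction---is the right one and is what Koml\'os--Szemer\'edi and Haslegrave--Kim--Liu actually do. But two of the quantitative ingredients, as you have set them up, do not work.

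First, the weight function. You define $w(n)=\prod_{j=\lceil k/5\rceil}^{n-1}(1-c\rho(j))^{-1}$, and claim that $\int_1^\infty\rho(x)/x\,dx<1/8$ forces $w(n)/w(m)\le 2$. This is not true: $\log\bigl(w(n)/w(m)\bigr)$ is comparable to $c\sum_{j=m}^{n-1}\rho(j)$, and since $\rho(j)$ only decays like $\eps/\log^2 j$ this sum diverges linearly in $n$; the hypothesis controls $\sum\rho(j)/j$, not $\sum\rho(j)$. The standard choice is a function $\lambda$ defined by $\lambda'/\lambda=\rho(x)/(2x)$, so that $\lambda(x)=\exp\bigl(\tfrac12\int^x\rho(t)/t\,dt\bigr)\in[1,e^{1/16}]$. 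Moreover, the right extremal quantity is $d(H)/\lambda(|H|)$ (equivalently $e(H)/(|H|\lambda(|H|))$), not $e(H)/w(|H|)$ with $w$ bounded: with a bounded $w$, removing a low-degree vertex decreases $e(H)$ by roughly $e(H)/|H|$ while $w$ barely moves, so $\Phi$ decreases and your minimum-degree argument produces no contradiction. Only with the extra factor of $|H|$ in the denominator does deleting a vertex of degree $<d(H)/2$ strictly increase the ratio.

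Second, and more seriously, the expansion step. You set $B=X\cup N_{H-F}(X)$ and assert that $e(H[B])\ge e(H)\bigl(1-O(\rho(|X|))\bigr)$. This cannot be correct: $|B|<(1+\rho(|X|))|X|\le(1+\rho)|H|/2$, so $H[B]$ lives on roughly half the vertices of $H$, and there is no reason the remaining half carries only an $O(\rho)$ fraction of the edges. The ``minimality of $|X|$'' device does not repair this: a minimal violating $X$ gives information about proper subsets of $X$ of size at least $k/2$, not about how many edges of $H$ lie inside $B$. The actual contradiction in the Koml\'os--Szemer\'edi argument comes from comparing \emph{both} $H_1:=H[B]$ and $H_2:=H-X$ against $H$. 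One observes that $e(H_1)+e(H_2)\ge e(H)-|F|\ge e(H)-d(H)\rho(|X|)|X|$ (the only edges missed by both are those from $X$ directly to $V(H)\setminus B$, which must lie in $F$), while $|H_1|+|H_2|\le|H|+\rho(|X|)|X|$; since $|H_1|$ is substantially smaller than $|H|$, $\lambda(|H_1|)$ is strictly smaller than $\lambda(|H|)$, and a short computation shows that if neither $H_1$ nor $H_2$ beats $H$ in the ratio $d(\cdot)/\lambda(|\cdot|)$ then these inequalities are inconsistent. You need this two-sided comparison; considering $H[B]$ alone, as you propose, is not enough.
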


	\begin{lemma}\label{short_path}\cite{Haslegrave2021a,Komlos1996}
		Let $G$ be an $n$-vertex $(\eps, k)$-robust-expander.
		Then for any two vertex sets $X_1, X_2$ of size at least $ x \geq \frac{k}{2}$, and a vertex set $W$ of size at most $ \frac{x \rho(x)}{4}$, 
		there exists a path between $X_1$ and $X_2$ in $G-W$ of length at most $ \frac{2}{\eps}\log^3(\frac{15n}{k})$.
	\end{lemma}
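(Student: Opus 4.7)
The plan is to perform a simultaneous breadth-first search from $X_1$ and $X_2$ inside $G-W$ and iterate the robust-expansion property until the two BFS balls meet. I would set $A_0 := X_1$ and $B_0 := X_2$, and at each step extend whichever of the two current balls is smaller, letting $A_{i+1} := A_i \cup N_{G-W}(A_i)$ (and similarly for $B$). The target is to show that each ball reaches size exceeding $n/2$ within $\tfrac{1}{\eps}\log^3(15n/k)$ steps, which forces the two balls to intersect and produces the desired $X_1$--$X_2$ path of total length at most $\tfrac{2}{\eps}\log^3(15n/k)$.

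The key single-step expansion estimate comes from applying the robust-expander hypothesis with $F$ taken to be the set of all edges of $G$ incident to $W$. After a mild preprocessing that bounds the maximum degree by a constant multiple of $d(G)$, we have $|F| \leq O(|W|\cdot d(G))$, and the hypothesis $|W| \leq x\rho(x)/4$ combined with the monotonicity of $y\mapsto y\rho(y)$ on $[k/2,\infty)$ gives $|F| \leq d(G)\,\rho(|A_i|)\,|A_i|$ for every $A_i$ with $|A_i|\geq x$. The robust-expander definition then yields $|N_{G-F}(A_i)| \geq \rho(|A_i|)|A_i|$, which, after absorbing the size of $W$ into the slack, translates into
\[
|A_{i+1}| \geq \bigl(1 + \tfrac{1}{2}\rho(|A_i|)\bigr)|A_i|.
\]

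To bound the number of iterations, I would take logarithms and sum. Since $\rho(y) = \eps/\log^2(15y/k)$, a standard integral comparison shows
\[
\sum_{i<t} \log\!\bigl(1+\tfrac{1}{2}\rho(|A_i|)\bigr) \; \geq \; \tfrac{\eps}{4}\int_{x}^{|A_t|} \frac{dy}{y\log^2(15y/k)},
\]
and one checks that $t \leq \tfrac{1}{\eps}\log^3(15n/k)$ steps are enough to drive $|A_t|$ above $n/2$; the same bound holds for $B$. As soon as $|A_i|, |B_j| > n/2$, the two balls must share a vertex, concatenating the two BFS paths yields an $X_1$--$X_2$ path in $G-W$ of length at most $\tfrac{2}{\eps}\log^3(15n/k)$.

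The main obstacle is the clean passage from the edge-deletion formulation of robust expansion to the vertex-avoidance statement we actually want. One must either pass first to a subgraph with bounded maximum degree (which is standard and costs only constants in $d(G)$), or invoke a vertex-robust variant of the expansion property; the numerical factor $1/4$ in the hypothesis $|W| \leq x\rho(x)/4$ is precisely what provides the headroom needed for both (i) covering all edges incident to $W$ inside the $\rho$-budget and (ii) subtracting $|W|$ from the external neighbourhood while still retaining a $(1+\tfrac{1}{2}\rho)$-growth. Once this reduction is in place, the remaining argument is a routine iteration of the single-step expansion bound.
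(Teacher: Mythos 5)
Your template is the right one: grow balls from $X_1$ and $X_2$ inside $G-W$, and iterate the expansion so that each ball exceeds $n/2$ within $\tfrac{1}{\eps}\log^3(15n/k)$ steps; the iteration count via $\int \frac{dy}{y\log^2(15y/k)}$ is also sound. The problem is the single-step estimate. You take $F$ to be every edge of $G$ incident to $W$ and need $|F|\le d(G)\rho(|A_i|)|A_i|$, which forces $|F|=O(|W|\,d(G))$ and hence a maximum-degree bound on $G$. The hypotheses give no such bound, and the ``mild preprocessing'' you invoke does not supply one here: passing to a subgraph $G'$ with $d(G')\ge d(G)/2$ and $\Delta(G')=O(d(G))$ is a well-known step in other contexts, but it is not known (and is in general false) that $G'$ inherits the $(\eps,k)$-robust-expander property, which is stated relative to $d(G)$ and quantified over all large subsets of $V(G)$; moreover the lemma demands a path in $G-W$ itself, so you cannot quietly replace $G$ by a pruned subgraph before starting.

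The fix is to drop the edge-deletion route entirely; your option (ii) already closes the argument. Apply the expansion hypothesis with $F=\varnothing$ to get $|N_G(A_i)|\ge\rho(|A_i|)|A_i|$, and then
\[
|N_{G-W}(A_i)|\ \ge\ |N_G(A_i)|-|W|\ \ge\ \rho(|A_i|)\,|A_i|-\tfrac14\,x\rho(x)\ \ge\ \tfrac34\,\rho(|A_i|)\,|A_i|,
\]
using the monotonicity of $y\mapsto y\rho(y)$ on $[k/2,\infty)$. Note also that your ``headroom for both (i) and (ii)'' framing is internally inconsistent: if $F$ really were all edges incident to $W$, then $N_{G-F}(A_i)$ would already be disjoint from $W$ and there would be nothing left to subtract. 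The edge-set robustness built into the definition is not what this lemma exploits (it is needed elsewhere in the theory, for example to avoid previously-built paths in addition to $W$); here the vertex deletion is handled by plain subtraction, and the factor $1/4$ is there solely to leave the $\tfrac34\rho$ growth after that subtraction.
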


    We defer more technical expansion properties we need for sublinear expanders to Section~\ref{sec:technical-expansion}.

	\section{Outlines of the main steps}\label{sec_proof} 
	Assume that $G$ and $t$ are as in Theorem~\ref{main_thm}.
	To construct a desired $K_t$-subdivision, we have to overcome both the degree constraint and the space constraint. As a first step, we have to find many vertices of degree at least $t$ (the degree constraint) and connect them with short paths(the space constraint). 
	
	Using Theorem~\ref{thm_robust_expander}, we can assume that our graph is an expander. In an expander, the edge distributions are somewhat uniform, so we can use the property of expander to overcome the degree constraint as long as $t\le \frac{d}{(\log d)^{O(1)}}$. If $t$ is even closer to $d$, we might have to `recycle' some used vertices (see Section~\ref{subsec_lemmas} for more explanations).
	
	To see what the space constraint means, note that a $K_t$-subdivision has $\Omega(t^2)$ paths and at most $n$ vertices in total.
	Hence the average length of the paths in the subdivision must be at most $O(\frac{n}{t^2})$. 
	We plan to use sublinear expander properties to build such paths, and the paths we obtain from Lemma~\ref{short_path} are only guaranteed to have length $O(\log^{3}n)$. Hence this is feasible only when $t=\frac{\sqrt{n}}{(\log n)^{O(1)}}$.
	However, we aim for $t= \sqrt{\frac{n}{\log n}}/(\log\log n)^{O(1)}$. For this, we need to be able to build paths of length $\log n(\log\log n)^{O(1)}$, which is much shorter than the ones guaranteed from the property of sublinear expanders. 
	
	Stronger expansion properties are needed to obtain shorter paths. Here is where the crux kicks into play as it provides good expansion for small sets as pointed out in Section~\ref{sec:crux-SSE}. That is, any set of size smaller than $c(G)$ must induce a subgraph of small average degree, hence sending many edges outside, yielding a large external neighbourhood as long as the graph has not too small minimum degree. This enhanced expansion can be used to construct shorter paths.
	
Thus, we want to use the sublinear expander property to obtain a good expansion for large sets and use the definition of crux to obtain a good expansion for small sets. 
Where then is the cutting point of these `large' and `small' sets?  
Note that the first expansion depends on the choice of $k$ on an $(\eps,k)$-expander, and the later expansion depends on the value of $c(G)$.
Hence, in order to combine these two expansions in a well-coordinated way, we need to take $(\eps,k)$-expander with several different values of $k$ in coordination with the value of $c(G)$ to cover different regimes to obtain Theorem~\ref{main_thm}. 

We will first introduce the following two lemmas which help us to put some assumptions on how big $c(G)$ can be. 
In what follows, if we claim a result holds if $0<a\ll b \ll c,d <1$, then it means there exist non-decreasing functions $f,g: (0,1]\rightarrow (0,1]$ such that the result holds when $a< f(b)$ and $b< g(c,d)$. We will not compute these functions explicitly.
\begin{lemma}\label{lem_dense}
	Suppose $0<\frac{1}{n} \ll \frac{1}{T} \ll \eps <\frac{1}{100}$ and $d \geq \exp(\log^{1/6} n)$.
	If an $n$-vertex $(\eps, \eps d)$-robust-expander $G$ has average degree at least $d/2$ and minimum degree $\delta(G) \geq \frac{d}{4}$, then $G$ contains a $K_t$-subdivision where $t= \min\{\frac{d}{(\log d)^{60}}, \frac{\sqrt{n}}{(\log (Tn/d))^{11}}\}$.
\end{lemma}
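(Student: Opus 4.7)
The plan is to follow the classical Bollob\'as--Thomason--Koml\'os--Szemer\'edi framework adapted to the robust sublinear expander setting: first identify $t$ \emph{core} (branch) vertices of high degree, then attach to each a small \emph{expansion unit} supplying $t-1$ internally disjoint launch pads, and finally greedily connect the $\binom{t}{2}$ pairs of cores by internally vertex-disjoint short paths furnished by Lemma~\ref{short_path}. Throughout, the hypothesis $\delta(G)\ge d/4 \ge t(\log d)^{60}/4$ supplies the local branching needed to build launch pads, while the $(\eps,\eps d)$-robust expander property supplies the global structure needed for the connection step.

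I would start by picking any $t$ core vertices $v_1,\ldots,v_t$ and, for each $v_i$, building a unit $U_{v_i}$ that contains $v_i$ together with $t-1$ internally disjoint short launch paths ending in launch sets $S_{v_i}^{(1)},\ldots,S_{v_i}^{(t-1)}$. Each launch set is grown by iterating the robust expansion guaranteed by Theorem~\ref{thm_robust_expander}: starting from a distinct small chunk of $v_i$'s neighborhood, one repeatedly takes neighborhoods (respecting the growing forbidden set of vertices already used by other units) until the set reaches a prescribed target size $x\ge\eps d/2$. Once all $t$ units are in place, enumerate the pairs $\{v_i,v_j\}$ and, for each pair in turn, apply Lemma~\ref{short_path} with $X_1=S_{v_i}^{(j)}$, $X_2=S_{v_j}^{(i)}$ and $W$ equal to the set of vertices used so far, producing a path of length at most $\tfrac{2}{\eps}\log^3(15n/(\eps d))$; attaching this path to $v_i$ and $v_j$ through their launch paths completes one edge of the $K_t$-subdivision.

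The main technical obstacle is the interplay between two competing budgets: the total volume of the construction (launch sets plus all connecting paths) must fit inside the $n$ vertices of $G$, while Lemma~\ref{short_path} requires the forbidden set to satisfy $|W|\le x\rho(x)/4$, yet $|W|$ can reach $\Theta(t^2\log^3(n/(\eps d)))$ by the end of the connection phase. A naive choice of $x\sim\eps d$ is insufficient, because then $x\rho(x)$ is too small to absorb the accumulated $W$; instead one must calibrate $x$ to simultaneously respect the total-volume bound and the expansion-budget bound, exploiting the sublinearity of $\rho$ and apportioning vertices carefully between the units and the later paths. A secondary obstacle arises when the degree bound $t\sim d/(\log d)^{60}$ is binding: producing $t$ vertex-disjoint launch pads from a core vertex of degree only $\Theta(d)$ requires ``recycling'' neighbors through several additional rounds of expansion, with each neighbor playing the role of the root of its own expanding subtree. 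Controlling the nested vertex-and-edge budget across all these layers, while ensuring that the units built at the start remain unperturbed during the connection phase, is the heart of the argument and is where the polylog exponents $60$ and $11$ in the statement are ultimately calibrated.
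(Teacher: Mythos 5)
The high-level structure you propose (build units around core vertices, then greedily connect via Lemma~\ref{short_path}) matches the paper's, but your construction of the units has a genuine gap and the space-budget analysis is off. You propose to grow each of the $t-1$ launch pads of a core $v_i$ starting from a \emph{distinct small chunk} of $N(v_i)$ and iterate robust expansion until reaching target size $x\ge\eps d/2$. In the binding regime $t\sim d/(\log d)^{60}$, splitting a neighbourhood of size $\Theta(d)$ into $t-1$ disjoint chunks leaves chunks of size $\sim(\log d)^{60}$, far below the threshold $\eps d/2$ at which the $(\eps,\eps d)$-robust-expander property even applies; there is simply no mechanism in the definition allowing such tiny sets to grow. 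Your fallback of ``recycling neighbours, each rooting its own expanding subtree'' does not save this: either the initial sets are still too small to expand, or, if you start from full neighbourhoods, the subtrees overlap heavily and it is unclear how to peel off internally disjoint launch paths. Moreover, even if you could grow $t(t-1)$ disjoint launch pads to size $\eps d/2$, their total volume $\sim t^2\eps d/2$ exceeds $n$ in the space-constrained regime $t\sim\sqrt{n}/(\log(Tn/d))^{11}$, so the pads would necessarily have to overlap across cores in a way your framework does not address.

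The paper's proof avoids all of this by \emph{decoupling} the launch pads from the cores. It first places $\Theta(t)$ exterior stars $K_{1,h_2}$ with $h_2 \sim t\log^7(Tn/d)$ directly in $G$ (Claim~\ref{disj_stars}), not grown from any fixed vertex. It also places many candidate stars $S_1,\dots,S_q$ and uses Corollary~\ref{cor_single_large_set} to find at least one $S_i$ whose $r$-ball, avoiding all forbidden vertices, is still large enough to reach the exterior stars; paths are routed one at a time, and the maximality argument guarantees one $S_i$ eventually gets $10t$ paths to distinct exterior stars, yielding a unit. The core vertex of the unit is thus the centre of this $S_i$ — it is \emph{not} fixed in advance. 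Two further points resolve the tensions you flagged but did not settle: (i) the short-path lemma is applied with set size $x\sim t^2\log^7(Tn/d)$ (the exterior of a unit), not $\eps d/2$, so the budget $x\rho(x)/4$ comfortably absorbs the $\Theta(t^2\log^4(Tn/d))$ vertices of forbidden interior and path material; and (ii) exterior stars are \emph{shared} across units — only the interiors need be disjoint — so the total volume fits in $n$. Your proposal as written neither identifies the independent exterior-star idea nor the Corollary~\ref{cor_single_large_set} mechanism for selecting a good core, both of which are essential.
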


	\begin{lemma}\label{lem_sparse}
		Suppose $0<\frac{1}{n} \ll b \ll \eps < \frac{1}{100}$ and $d \leq \exp(\log^{1/6} n)$.
		If an $n$-vertex $(\eps, \eps d)$-expander $G$ have average degree at least $d/2$ and minimum degree $\delta(G) \geq \frac{d}{4}$, then $G$ contains a $K_{bd}$-subdivision.
	\end{lemma}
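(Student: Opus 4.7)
The plan is to build the $K_{bd}$-subdivision by choosing $bd$ branch vertices and then inserting a short, internally disjoint $v_i$--$v_j$ path for every pair $\{i,j\}$ via a short-path lemma for expanders such as Lemma~\ref{short_path}. The crucial feature of the regime $d \le \exp(\log^{1/6} n)$ is that $bd \le d = n^{o(1)}$ and hence $\binom{bd}{2}\cdot O(\log^3 n) = n^{o(1)}$, so the entire subdivision uses only $n^{o(1)}$ vertices of $G$, leaving essentially all of $V(G)$ as a playground and eliminating any space bottleneck.

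I would first pick $bd$ branch vertices $v_1,\dots,v_{bd}$ of degree in $[d/4, 2d]$ with pairwise graph-distance at least $3$, found greedily since each second-ball $B^2(v_i)$ has size $O(d^2)$ and $bd \cdot d^2 = n^{o(1)} \ll n$; in particular the neighborhoods $N(v_i)$ are pairwise disjoint. For each $v_i$ I set a port set $P_i \subseteq N(v_i)$, potentially enlarged to a BFS port ball of size $M$ around $v_i$ if extra slack is needed, keeping all $P_i$'s pairwise disjoint (possible since $bd \cdot M \ll n$). Then, enumerating the $\binom{bd}{2}$ pairs in arbitrary order, for each $\{i,j\}$ I invoke Lemma~\ref{short_path} to connect $P_i$ to $P_j$ by a path of length $O(\log^3 n / \eps)$ in $G - W$, where
\[
W \;=\; \{v_\ell : \ell \ne i,j\}\, \cup\, \bigcup_{\ell \ne i, j} P_\ell\, \cup\, I,
\]
with $I$ the union of interior vertex sets of paths already constructed. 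Prepending $v_i$ and appending $v_j$ (together with the short internal tree paths inside $P_i, P_j$ if these are balls rather than bare neighborhoods) yields the subdivision path connecting $v_i$ and $v_j$.

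The hard part will be verifying, at every iteration, the two preconditions of Lemma~\ref{short_path}: that $|P_i \setminus W|, |P_j \setminus W| \ge k/2 = \eps d/2$, and that $|W|\le x\rho(x)/4$ for $x$ equal to the smaller source size. Including $P_\ell$ for $\ell \ne i,j$ in $W$ ensures that no path interior enters another port set, so $P_i$ is depleted only by its at most $bd-1$ endpoint uses, well below its $\Omega(d)$ size, settling the first precondition. The second precondition is the bottleneck, since $|W|$ is dominated by $\sum_{\ell \ne i,j}|P_\ell| = O(bd\cdot |P_\ell|)$ together with $|I| = O(b^2 d^2 \log^3 n)$. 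To accommodate this, I would take the port-ball size $M$ to be moderately large (polynomial in $d$ and polylogarithmic in $n$) so that $M\rho(M) \asymp \eps M/\log^2(M/d)$ comfortably dominates both contributions, using $d^{O(1)} = n^{o(1)}$ to keep the balls disjoint. The main technical work lies in this quantitative calibration of $M$ and the choice of $b$ so that the two preconditions of Lemma~\ref{short_path} hold throughout the $\binom{bd}{2}$ iterations; once this is arranged, the iterative construction delivers the $K_{bd}$-subdivision as desired.
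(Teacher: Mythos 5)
Your plan breaks at the central step. You put $\bigcup_{\ell\neq i,j}P_\ell$ into the avoidance set $W$, which forces $|W|\ge (bd-2)M$ where $M=|P_\ell|$. But Lemma~\ref{short_path} demands $|W|\le x\rho(x)/4$ with $x=|P_i\setminus W|\le M$, and since $\rho(x)=\eps/\log^2(15x/(\eps d))<\eps<1/100$, the threshold is $x\rho(x)/4<M/400$. So $|W|$ exceeds the allowed bound by a factor $\Omega(bd)$, and \emph{no choice of $M$ can fix this}: your intended inequality ``$M\rho(M)$ dominates $bd\cdot |P_\ell|$'' reads $M\rho(M)\gg bd\cdot M$, i.e.\ $\rho(M)\gg bd$, which is false since $\rho<1$. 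Dropping the other port sets from $W$ does not save the argument either: then paths between $P_a$ and $P_c$ may pass through $B^r(v_\ell)$, and when you later try to route the $\lesssim bd$ internal ``tree'' paths from $v_\ell$ to the endpoints landing in $P_\ell$, you need those tree paths to be internally disjoint from everything previously built — this is precisely the problem of constructing internally-disjoint \emph{units}, whose difficulty the paper discusses explicitly in the proof sketch of Lemma~\ref{lem_dense}. A secondary gap: ``each second-ball $B^2(v_i)$ has size $O(d^2)$'' is not guaranteed here, since the lemma gives no maximum-degree bound; a vertex $v_i$ of degree $\le 2d$ can have neighbours of arbitrarily high degree, making $B^2(v_i)$ as large as $\Theta(n)$.

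The paper's actual proof is a short reduction rather than a direct construction: it first invokes Lemma~\ref{lem_bounded_maxdeg} (which, in the absence of a $K_{bd}$-subdivision, supplies an $(\eps/2,\eps d)$-robust-expander subgraph $H$ with $\delta(H)\ge d/4$ and $\Delta(H)\le 10d^2\log^{10}n$, fixing exactly the degree issue above), and then quotes Lemma~\ref{lem_sparse_blackbox} — a parameter-adjusted version of Lemma~5.1 of Liu--Montgomery~\cite{Liu2017} — to embed the $K_{bd}$-subdivision in the bounded-degree expander $H$. The genuinely hard combinatorial work (building and connecting units in the regime $d\le\exp(\log^{1/6}n)$) is encapsulated in that cited lemma; your proposal attempts to redo it from scratch, and the naive ``reserve all port balls'' strategy cannot work.
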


Note that Theorem~\ref{thm_omega_sqrt_d} follows immediately from Lemma~\ref{lem_dense} and Lemma~\ref{lem_sparse}. Lemma~\ref{lem_sparse} is a strengthening of Lemma~2.10 in~\cite{Liu2017}; we include its proof in the arXiv appendix.

We will use Theorem~\ref{thm_robust_expander} to take an $(\eps, \eps d)$-expander subgraph $G_1\subseteq G$.
If both $d$ and $\sqrt{n}$ are a polylog-factor larger than $\sqrt{c(G)}$, then the above two lemmas already yield the desired $K_t$-subdivision. 
If $\sqrt{c(G)} = \tilde{\Omega}(d)$,  then we will use Theorem~\ref{thm_robust_expander} to find an $(\eps, d^2)$-robust-expander $G_2 \subseteq G_1$. With this choice, the following lemma will provide a desired $K_t$-subdivision within $G_2$.

	\begin{lemma}\label{lem_d2_expander}
		Suppose $0<\frac{1}{n} \ll b \ll \eps \ll 1$ and $\exp(\log^{1/6} n) \leq d\leq\frac{\sqrt{n}}{(\log n)^{1000}}$.
		If an $n$-vertex $(\eps, d^2)$-robust-expander $G$ satisfies $d(G) \geq \frac{d}{4}$ and $\delta(G) \geq \frac{d}{8}$, then $G$ contains a $K_{t}$ subdivision with $t=b\cdot \frac{d}{(\log \log d)^4}$.
	\end{lemma}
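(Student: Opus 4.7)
Write $t := b d / (\log\log d)^4$. The plan is a Liu--Montgomery-style expansion-unit construction, bootstrapped by the nested-scale expander idea from Haslegrave--Kim--Liu~\cite{Haslegrave2021a}. I would (i) carve out $t$ pairwise vertex-disjoint \emph{units} $U_1, \dots, U_t$ inside $G$, each $U_i$ anchored at a \emph{centre} $v_i$ and carrying $t-1$ designated \emph{exits} $e_i^{(1)}, \dots, e_i^{(t-1)}$ joined to $v_i$ by internally vertex-disjoint paths inside $U_i$; and then (ii) link each pair $(e_i^{(k)}, e_j^{(k')})$ --- one pair per element of $\binom{[t]}{2}$, using each exit exactly once --- by an internally vertex-disjoint short path in $G \setminus \bigcup_\ell U_\ell$. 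Declaring the $v_i$'s as core vertices and concatenating each linking path with its two unit-internal $v \leftrightarrow e$ extensions yields the required $K_t$-subdivision.

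For step (i), I would greedily peel units off $G$: each new centre $v_i$ is a vertex of degree $\ge d/8$ not yet claimed by a previous unit, a controlled BFS from $v_i$ produces the $t-1$ internally vertex-disjoint trunks, and $U_i$ is then removed from the pool. The hypothesis $\delta(G) \ge d/8 \ge t$ supplies $t-1$ fresh initial branches at each $v_i$; keeping each trunk only a few levels deep makes $|U_i| = O(d \cdot \text{polyloglog}(d))$, so the total unit footprint $t \cdot |U_i|$ stays in $o(n)$ thanks to $d \le \sqrt{n}/(\log n)^{1000}$.

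For step (ii), the natural tool Lemma~\ref{short_path} requires endpoint sets of size $\ge d^2/2$, so one cannot apply it to a single exit. Instead, for each linking task I would ``fatten'' the two exits on demand into BFS-balls of size $\ge d^2/2$ inside the currently available vertex pool, apply Lemma~\ref{short_path} to connect the balls via a short path of length at most $\tfrac{2}{\eps}\log^3(15n/d^2)$, and finally route inside each ball from the connection endpoint back to the original exit. The $(\eps, d^2)$-robust-expansion drives both the ball-fattening (each step multiplies the ball by $1 + \rho(\cdot) = 1 + 1/\text{polylog}$) and the short-path stage; the robustness tolerance lets each successive invocation treat previously used vertices as the forbidden set $W$, provided $|W|$ stays below $d^2 \rho(d^2)/4$.

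The main obstacle will be the quantitative book-keeping that delivers the sharp exponent $4$ in $(\log\log d)^4$. Three losses must be simultaneously tamed: (a) the polyloglog cost of fattening an exit to a $d^2/2$-sized ball via min-degree $d/8$ followed by $(\eps, d^2)$-robust-expansion; (b) the cube from the path-length polylog $\log^3(n/d^2)$, which is $O((\log\log d)^3)$ in the space-tight regime $d \approx \sqrt n/(\log n)^{1000}$ but can reach $O(\log^3 n)$ at the opposite end $d \approx \exp(\log^{1/6}n)$; and (c) coordinating all $\binom{t}{2}$ linkings so that the accumulated vertex usage stays well below the robustness threshold of Lemma~\ref{short_path}. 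Making these three losses multiply out to precisely $(\log\log d)^{4}$ --- rather than some larger polylog --- is likely where the ``three nested layers of expanders'' strategy alluded to in the introduction is most delicate, and I would expect this to be the technical heart of the proof.
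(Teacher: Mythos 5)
Your plan is unit-based: carve out $t$ internally disjoint units, each with $t-1$ exits, and route one short path per pair of exits in $G\setminus\bigcup_\ell U_\ell$. The paper does not do this for this lemma; it goes through the heavier \emph{web} structure precisely because a unit-only approach breaks down in exactly the place you flag as delicate. Concretely, Lemma~\ref{short_path} (or its generalisation Lemma~\ref{lem: short_path_general}) can only avoid a forbidden set $W$ of size about $x\rho(x)/4 = \Theta(\eps d^2)$ when the endpoint balls have size $x\approx d^2$. But you are building $\binom{t}{2}\approx d^2/(\log\log d)^{8}$ links, and every successful invocation of the short-path lemma returns a path of length at least on the order of $\eps^{-1}\log^3(n/d^2)$, which at the low end $d\approx\exp(\log^{1/6}n)$ is $\Theta(\log^{3}n)$, not $\Theta((\log\log d)^{3})$. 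The accumulated path footprint is therefore $\gg \eps d^2$ long before you finish, and at that point neither a fresh invocation of Lemma~\ref{short_path} nor your ball-fattening step can avoid the already-used vertices. The web construction in the paper resolves this by \emph{not} demanding that paths avoid the interiors of other webs --- paths only avoid the much smaller \emph{cores} of webs. Webs have built-in redundancy (each web has $h_4$ units, each unit $h_1$ branches, each branch $h_2$ leaves), so a web survives as long as fewer than half its units have been wrecked; a careful accounting (Lemma~\ref{lem_conn_webs}) shows only $o(t)$ webs are lost. Your units-with-one-exit-per-pair have no such slack.

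There is a second, independent gap: fattening a single exit vertex into a ball of size $d^2/2$ is not something the $(\eps,d^2)$-robust expansion supports. The expansion property is vacuous for sets of size below $d^2/2$, and the minimum degree $\delta(G)\ge d/8$ only carries you one BFS step, to a set of size $\approx d/8$. Between size $d/8$ and size $d^2/2$ a single BFS ball can stall completely (nothing in the hypotheses rules out a near-clique around the exit, for instance). The paper never tries to expand a single small set; it always works with a \emph{collection} of $q$ small sets whose union is large, and uses Lemma~\ref{many_large_sets}/Corollary~\ref{cor_single_large_set} (or Lemma~\ref{single_large_set}) to extract at least one set whose truncated BFS ball is guaranteed large. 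This ``collectively large $\Rightarrow$ one individually large'' step is central and is missing from your plan.

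Finally, you never address where the large anchor stars come from. For $h_4\approx t\approx d/(\log\log d)^4$ branches per web centre you need many disjoint stars $K_{1,\Omega(d)}$, but the generic ``deleted a small $W$, average degree of $G-W$ is still $\Omega(d/\log^2 n)$'' bound (Lemma~\ref{lem_avg_deg}) only yields stars of size $d/\operatorname{polylog}(n)$, which is too small. The paper needs the extra move of passing (via Lemma~\ref{lem_bounded_maxdeg}) to a subgraph with $\Delta=O(d^2\log^{10}n)$ and then splitting into a \emph{uniform} case (where greedily pulling out $K_{1,d/100}$ stars works) and a \emph{skewed} case (where a small set $Q$ carries almost all edges and the max-degree bound forces $Q$ to have $\Omega(d)$ usable star centres, Claim~\ref{heavy_q}). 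Without this dichotomy the degree constraint blocks the construction.
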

	
	In the remaining case, $d$ is very close to $\sqrt{c(G)}$ up to a polylogarithmic factor. Then we find instead an $(\eps,c(G)/100)$-robust-expander $H\subseteq G_2$ and apply the following lemma. Here, $d$ being very close to $\sqrt{c(G)}$ provides a well-coordination between two expansions coming from the sublinear expander and the definition of the crux.
	
	\begin{lemma}\label{lem_nc_expander} 
	 Suppose $0<\frac{1}{n} \ll b \ll \eps \ll 1$.
	 Let $G$ be an $n$-vertex graph with $d(G)=d$ and $H \subseteq G$ be an $(\eps, \frac{c(G)}{100})$-robust-expander satisfying $\frac{d}{16} \leq \delta(H)\leq  d(H)$.
	 If $d \geq \frac{\sqrt{|H|}}{\log^{1000}|H|}$ and $\frac{1500|H|}{c(G)} \leq \log^{3000} |H|$, then $H$ contains a $K_t$-subdivision where $t=  \frac{b t'}{(\log\log t')^6} \enspace \text{ and }	 \enspace
t' = \min\left\{d, \sqrt{\frac{c(G)}{\log c(G) }}\right\}.$
	\end{lemma}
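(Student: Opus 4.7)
The proof should exploit two distinct expansion regimes in $H$. For small sets $S \subseteq V(H)$ with $|S| < c(G)/100$, the minimality in the definition of crux forces $d(G[S]) < d/100$, which combined with $\delta(H) \geq d/16$ yields $e_H(S, V(H)\setminus S) = \Omega(d|S|)$ and hence $|N_H(S)| = \Omega(|S|)$ --- linear expansion. For larger sets, the $(\eps, c(G)/100)$-robust-expander property provides a growth factor $\rho(|S|) \geq \Omega(1/(\log\log|H|)^2)$, since the hypothesis $|H|/c(G) \leq \log^{3000}|H|$ bounds the argument of $\rho$. My first step would be to package these two regimes into a single BFS-type lemma: starting from any modest-sized set, the ball of radius $O(\log c(G))$ has size $\geq c(G)/100$, and the ball of radius $O(\log c(G) + (\log\log|H|)^{3})$ covers a positive fraction of $V(H)$, even after a controlled number of vertex and edge deletions.

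Next, I would build the skeleton of the subdivision. Since $d(H) \geq d/16 \geq t/16$, at least $\Omega(d)$ vertices have degree $\geq t$ in $H$. Following the unit-based framework of Liu--Montgomery~\cite{Liu2017}, I would construct $t$ disjoint \emph{units} --- small, structurally flexible gadgets centred at high-degree vertices --- to serve as the branch vertices of the target $K_t$-subdivision; their total size will be $o(|H|)$. Then for each of the $\binom{t}{2}$ pairs of units I iteratively find an internally disjoint path by growing BFS balls from the two endpoints using the combined two-regime expansion. Each such ball reaches a positive fraction of $V(H)$ within $O(\log c(G) \cdot (\log\log|H|)^{O(1)})$ steps, so two balls inevitably meet, producing a path of that length. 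The total vertex usage is $O(t^{2} \log c(G) \cdot (\log\log|H|)^{O(1)}) = o(|H|)$, which just fits since $t^{2} \lesssim c(G)/\log c(G)$ and $|H| \leq c(G)\log^{3000}|H|$.

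The principal obstacle is the \emph{robust} aspect: expansion must be preserved as we iteratively delete the vertices and edges used in earlier paths. At each stage the forbidden set must stay below $d(H)\rho(|S|)|S|$ in all relevant size ranges, and must simultaneously be small enough that the crux-based small-set expansion continues to hold for the BFS from the current pair. Coordinating this across all $\binom{t}{2}$ iterations requires careful bookkeeping, very likely combined with the idea of splitting each path into chunks so that the forbidden set can be partially refreshed, in the spirit of the iterative nested-expander technique of Haslegrave--Kim--Liu~\cite{Haslegrave2021a}. I expect this balancing act, together with the fact that each application of the sublinear expander property costs a $(\log\log|H|)^{2}$ factor in the growth rate, to be the precise source of the $(\log\log t')^{6}$ loss in the final bound on $t$.
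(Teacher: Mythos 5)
Your identification of the two expansion regimes is exactly right and matches the paper's Claim~6.1 (``cl: expansion in c100''): for $|S|<c(G)/100$, the crux minimality together with $\delta(H)\ge d/16$ forces $|N_H(S)|=\Omega(|S|)$, and for larger sets the robust-expander property gives $\rho\approx\eps/\log^2(|H|/c(G))=\Omega(1/(\log\log|H|)^2)$ thanks to the hypothesis $|H|/c(G)\le\log^{3000}|H|$. The short-path machinery you describe is also the right engine.

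However, there is a genuine gap in the skeleton step, and it is precisely the point where you flag a ``principal obstacle'' without resolving it. You propose to take $t$ disjoint \emph{units} as branch-vertex gadgets and then, for each of the $\binom{t}{2}$ pairs, find a path that avoids \emph{all} unit interiors and all previously built paths. In this bottom-layer regime the space budget is too tight for that to work: the total interior of the $t$ units is on the order of $t\cdot h_1h_3$, and to avoid it via the short-path lemma one needs a unit exterior $h_1h_2 \gtrsim \rho^{-1}\cdot(t\,h_1h_3 + \text{paths so far})$, i.e.\ $h_2\gtrsim \rho^{-1}t h_3$. Since $t'\approx\sqrt{c(G)/\log c(G)}$ and each star can have at most $O(d)$ leaves, this inequality cannot be satisfied in the tight case $d\approx t'$ --- the exteriors would have to be $\mathrm{polyloglog}$-larger than the degree allows. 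The paper's resolution, which is absent from your proposal, is the \emph{web} structure (a unit of units, introduced in Section~3). When connecting webs, the $\binom{t}{2}$ paths are only required to avoid the web \emph{cores} (size $\approx 1+h_4h_5$), not the full web interiors; the paths are allowed to collide with interior branches, and each web's many internal units provide the redundancy to absorb a bounded amount of such damage (Lemma~4.9/\ref{lem_conn_webs}, together with Claims~6.3 and~6.4 which assemble units into webs). Your suggestion of ``splitting paths into chunks so the forbidden set can be partially refreshed'' in the spirit of Haslegrave--Kim--Liu does not supply this extra level of hierarchy; the idea you need is the web gadget from Kim--Liu--Sharifzadeh--Staden, and without it the construction does not close in the regime where both $d$ and $\sqrt{c(G)/\log c(G)}$ are within polyloglog factors of $t$.

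Two smaller points: your path-length estimate wavers between an additive form $O(\log c(G)+(\log\log|H|)^3)$ and a multiplicative form $O(\log c(G)\cdot(\log\log|H|)^{O(1)})$; the correct account is multiplicative (the paper takes $h_5=20\log n_c(\log K)^{10}$ with $\log K=O(\log\log n_c)$), since each application of the short-path lemma in the sublinear-expander regime costs a factor $\rho^{-1}\approx(\log\log)^2$. Also, the ``just fits'' calculation is fine only after the loss of $(\log\log t')^6$ has already been built into $t$; it is circular to cite the fit as the source of the loss without first fixing the web-based parameters ($h_1,\dots,h_5$) that produce exponent $6$.
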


With these four lemmas, Theorem~\ref{main_thm} follows.
\begin{proof}[Proof of Theorem \ref{main_thm}]
Choose small numbers $b,\eps$ and large integer $n_0$ with $0< \frac{1}{n_0} \ll b, \frac{1}{T} \ll \eps \ll 1$ so that Lemma~\ref{lem_dense}--\ref{lem_nc_expander} hold for $n\geq n_0$.
By taking $\beta = \min\{\frac{1}{100n_0}, 10^{-10}\}$, we also assume that $d$ is at least $\max\{10^5, T\}$ and $c(G)$ is at least $n_0$. 
To simplify the notation, let $n_c=c(G)$.

Let $G$ be an $n$-vertex graph with $d(G)=d$.
By Theorem~\ref{thm_robust_expander} we have an $(\eps, \eps d)$-robust-expander $G_1 \subseteq G$ with $d(G_1) \geq \frac{d}{2}$, $\delta(G_1) \geq \frac{d}{4}$ and let $n_1:=|G_1|.$
If $d \leq \exp(\log^{1/6} n_1)$ then by Lemma~\ref{lem_sparse}, $G_1$ contains a $K_{\beta d}$-subdivision as $\beta \leq b$. As $\beta d \geq \beta t/(\log \log t)^6$, we obtain the desired subdivision in this case. Hence we may assume $d > \exp(\log^{1/6} n_1)$.
With this assumption, Lemma~\ref{lem_dense} implies that  $G_1$ contains a $K_s$-subdivision where $s=\min\Big\{\frac{d}{(\log d)^{60}}, \frac{\sqrt{n_1}}{(\log n_1)^{11}}\Big\}.$
If $s$ is bigger than $t \geq \beta t/(\log \log t)^6$, then we are done. Otherwise, we have either 
\begin{align}\label{eq: A or B}
	\sqrt{\frac{n_c}{\log n_c}} \geq \frac{d}{(\log d)^{60}} \enspace \text{ or }\enspace 
		\sqrt{\frac{n_c}{\log n_c}}\geq \frac{\sqrt{n_1}}{(\log n_1)^{11}}.
\end{align}

By Theorem~\ref{thm_robust_expander}, we have an $(\eps,  d^2)$-robust-expander $G_2 \subseteq G_1$ with $d(G_2) \geq \frac{d}{4}$, $\delta(G_2) \geq \frac{d}{8}$, and let $n_2:=|G_2|.$ If $d \leq \frac{\sqrt{n_2}}{(\log n_2)^{1000}}$, then Lemma~\ref{lem_d2_expander} implies that $G_2$ contains a $K_{bd/(\log \log d)^4}$-subdivision as desired.
Hence, we may assume that $d > \frac{\sqrt{n_2}}{(\log n_2)^{1000}}$.

Finally, by Theorem~\ref{thm_robust_expander} we have an $(\eps,  n_c/100)$-robust-expander $H \subseteq G_2$ with $d(H) \geq \frac{d}{8}, \enspace \delta(H) \geq \frac{d}{16}, \enspace \text{ and let } \enspace n_H:=|H|.$
Then we have 
\begin{align}\label{eq: d condition}
d > \frac{\sqrt{n_2}}{(\log n_2)^{1000}} \geq \frac{\sqrt{n_H}}{(\log n_H)^{1000}}.	
\end{align}

We claim that $\frac{n_H}{n_c} \leq (\log n_H)^{3000}$. Indeed,  
if the first case of \eqref{eq: A or B} holds, then $\frac{\sqrt{n_c}}{\sqrt{\log n_c}} \geq \frac{d}{(\log d)^{60}} \geq \frac{\sqrt{n_H}}{(\log n_H)^{1060}},$ implying $\frac{n_H}{n_c} \leq (\log n_H)^{3000}$.
If the second case of \eqref{eq: A or B} holds, then $\sqrt{n_c} \geq \frac{\sqrt{n_1}}{(\log n_1)^{11}} \geq \frac{\sqrt{n_H}}{(\log n_H)^{11}}$
as $n_H\leq n_1$, so $\frac{n_H}{n_c} \leq (\log n_H)^{22}$. 
This together with \eqref{eq: d condition} implies that we can apply Lemma~\ref{lem_nc_expander} to $H$, to obtain a $K_{\beta t/(\log \log t)^6}$-subdivision as desired.
	\end{proof}
\begin{proof}[Proof of Theorem~\ref{thm_omega_sqrt_d}]
	Choose small numbers $\eps, \frac{1}{T}, b$ and large integer $n_0$ with $0< \frac{1}{n_0} \ll \frac{1}{T}, b \ll \eps \ll 1$ so that Lemma~\ref{lem_dense} and \ref{lem_sparse} hold for $n\geq n_0$.

It suffices to show that the following holds for all $d,a>1$: if $d(G)=d$ and $c(G)\geq ad$, then $G$ contains a $K_{t}$-subdivision with $t= \min\{\frac{bd}{(\log d)^{60}},  \frac{\sqrt{a}\cdot\sqrt{d}}{\log^{11} (Ta)} \}$. Note that as $a$ and $d$ grows, both terms defining $t$ are $\omega(\sqrt{d})$.

	Let $G$ be an $n$-vertex graph with $d(G)=d$ and $c(G)\geq ad$.
	By Theorem~\ref{thm_robust_expander} we have an $(\eps, \eps d)$-robust-expander $G_1 \subseteq G$ with $d(G_1) \geq d/2$ and $\delta(G_1) \geq d/4$.
	By the definition, we have $|G_1| \geq c(G)$.
	If $d \leq \exp(\log^{1/6} |G_1|)$, then by Lemma~\ref{lem_sparse}, $G_1$ contains a $K_{bd}$-subdivision and $bd\geq t$.
	When $d \geq \exp(\log^{1/6} |G_1|)$, then by Lemma~\ref{lem_dense}, $G_1$ has a clique subdivision of size at least $\min\left\{\frac{d}{(\log d)^{60}}, \frac{\sqrt{c(G)}}{\left(\log(\frac{Tc(G)}{d})\right)^{11}}\right\} = \min\left\{\frac{d}{(\log d)^{60}},  \frac{\sqrt{a}\cdot\sqrt{d}}{\log^{11} (Ta)}\right\}\geq t.$
	This concludes the proof.
\end{proof}
	
	For proving Lemmas~\ref{lem_dense}, \ref{lem_d2_expander}, and \ref{lem_nc_expander}, we will use the following two concepts of units and webs. See Figure~\ref{fig:1} and Figure~\ref{fig:2}
 	
	\begin{defn}[Unit]
		Consider $h_1$ vertex-disjoint copies of stars $K_{1,h_2}$ with center vertices $v_1,\dots v_{h_1}$. Consider another vertex $v$ and add internally vertex-disjoint paths of length at most $h_3$ connecting $v$ and $v_i$ for each $i\in [h_1]$ which does not intersect with any copies of $K_{1,h_2}$ except the vertex $v_i$. We call the resulting graph an \emph{$(h_1,h_2,h_3)$-unit}.
		
		The vertex $v$ is called the \emph{center} of the unit and the non-leaf vertices of the unit are called \emph{interior vertices}. The \emph{interior} of the unit is the set of interior vertices of the unit. The set of leaf vertices is called the \emph{exterior} of the unit. We also call each copy of $K_{1, h_2}$ an \emph{exterior star} of the unit.
		For an exterior star of the unit, there is a unique path from the center to the exterior star.
		The union of the exterior star together with the path except the center is called a \emph{branch} of the unit.
		A collection of units are \emph{internally disjoint} if the interiors of units are pairwise disjoint.
	\end{defn}
	
	Note that an $(h_1,h_2,h_3)$-unit has at most $1+h_1h_3$ vertices in the interior and $h_1h_2$ vertices in its exterior.
	
	\begin{defn}[Web]
	 Consider $h_4$ many vertex-disjoint $(h_1, h_2, h_3)$-units with center vertices $v_1, \cdots, v_{h_4}$.
	 Add a new vertex $v$ which is not contained in any of the units and add internally disjoint paths of length at most $h_5$ connecting $v$ and $v_i$ for $i \in [h_4]$ which does not intersect any of units except at $v_i$.
	 We say the resulting graph is an \emph{$(h_4, h_5, h_1,h_2,h_3)$-web}.

	 The vertex $v$ is called the \emph{center} of the web and the set of vertices in the paths connecting $v$ and $v_i$'s is called the \emph{core} of the web.
	 The set of leaves in the web is called the \emph{exterior} of the web and the set of non-leaf vertices is called the \emph{interior} of the web.
	 For a copy of the unit inside the web, there is a unique path from the center to the unit.
	 	The union of the unit together with the path except the center is called a \emph{branch} of the web. 
	 A collection of webs are \emph{internally disjoint} if the interiors of webs are pairwise disjoint.
	\end{defn}
	Note that an $(h_4,h_5,h_1,h_2,h_3)$-web has at most $1+h_4h_5$ core vertices and at most $1+h_4h_5+h_1h_3h_4$ interior vertices. On the other hand, it has  $h_1h_2h_4$ vertices in its exterior.
	 
	\begin{figure}
		\centering
		\minipage{0.37\textwidth}
  		\includegraphics[width=\linewidth]{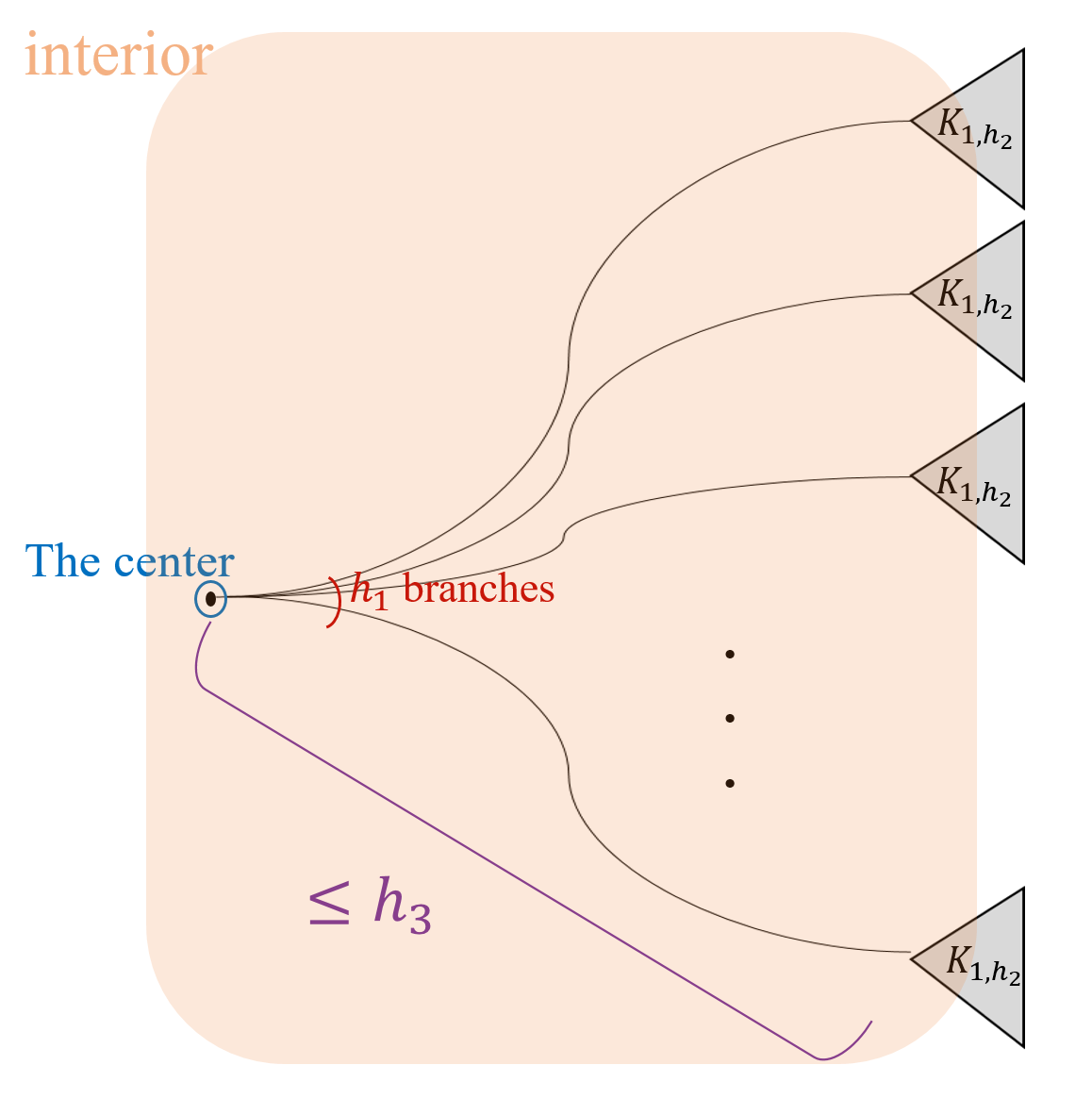}
  		  		\caption{$(h_1, h_2, h_3)$-unit}
  		\label{fig:1}
		\endminipage\hfill
		\minipage{0.55\textwidth}
  		\includegraphics[width=\linewidth]{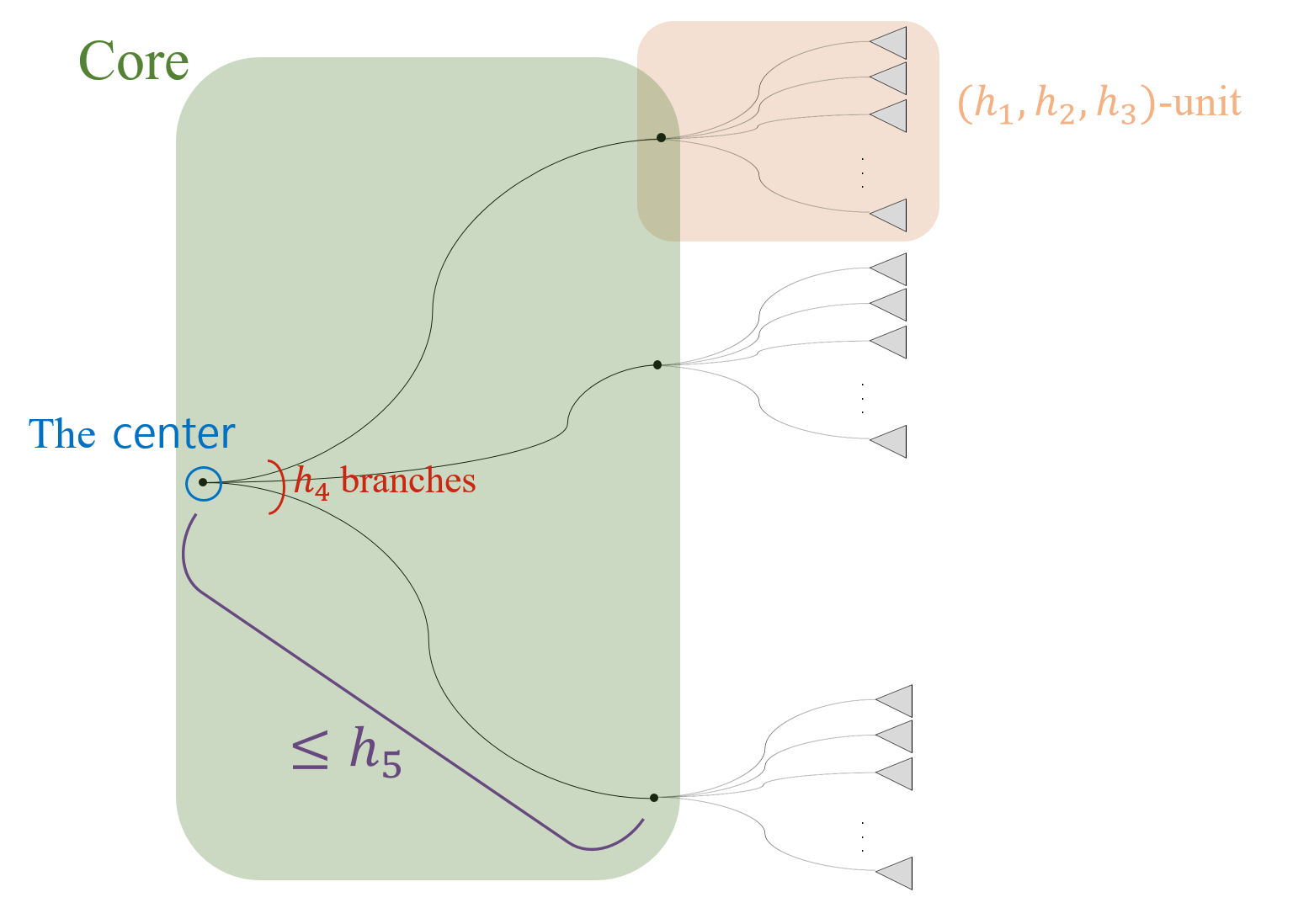}
  		  		\caption{$(h_4, h_5, h_1, h_2, h_3)$-web}
  			\label{fig:2}
		\endminipage
	\end{figure}

	\subsection{Proof Sketch of Lemmas}\label{subsec_lemmas}
	
\noindent	{\bf A proof sketch of Lemma~\ref{lem_dense}.} 
	Our plan is to find many internally disjoint $( h_1,h_2,h_3)$-units with appropriate $h_1, h_2$ and $h_3$. If we have $t$ such units with large enough exteriors, then we can use Lemma~\ref{short_path} to greedily connect two exteriors while avoiding the interior of other units and previously built paths. This will yield the desired clique subdivision. 
	
	In order to build a subdivision iteratively, we need $h_1$ to be at least the degree of the core vertices of the subdivision, $h_1 = \Omega(t)$.
	As we plan to use Lemma~\ref{short_path}, we will have $h_3 = (\log n)^{\Omega(1)}$.
	When we build a $K_t$-subdivision using $t$ units, we have to connect the exterior of two units while avoiding the interior of $t-2$ other units. Hence, the size $h_1h_2$ of the exterior of a unit must be logarithmically larger than the interior size $h_1h_3$ of a unit times $t-2$, hence we need $h_2 \geq t h_3 = t (\log n)^{\Omega(1)}$.
	
	However, building $t$ such $(h_1,h_2,h_3)$-units with disjoint interiors turns out to be challenging. We will find many stars $K_{1,h_2}$ disjoint from already-built units using the definition of the robust expander (see Lemmas~\ref{lem_avg_deg}~and~\ref{disj_stars}). Let $W$ be the set of interior vertices of $t-1$ already built internally disjoint units.  Then $W$ may contain up to $th_1h_3 = t^2 (\log n)^{\Omega(1)}  = \omega(h_2)$ vertices. 
	Hence, connecting two stars $K_{1,h_2}$ while avoiding $W$ is not what Lemma~\ref{short_path} can offer.
	To avoid this issue, one natural way is to collect $q=t(\log n)^{\Omega(1)}$ many stars $S_1,\dots, S_q$ and $q'=\Omega(t)$ many stars $R_1,\dots, R_{q'}$ and build many vertex-disjoint paths between $S_i$'s and $R_j$'s.
	Collectively, $\bigcup_{i\in [q]} S_i$ and $\bigcup_{j\in [q']}R_j$ are both large, so we can find many paths between them and this yields many paths from one $S_i$ to $\bigcup_{j\in [q']}R_j$. However, we need to ensure that those paths have the other ends in all different $R_j$'s. 
	
	In order to ensure this, we do not directly find paths between those stars.
	Instead, we prove that at least one star $S_i$ have large $r$-balls $B^r_{G}(S_i)$ around it even after avoiding $W$ and previously built paths between $S_i$'s and $\bigcup_{j\in [q']}R_j$ (see Corollary~\ref{cor_single_large_set}).
	As this $r$-ball alone is large enough, we can use Lemma~\ref{short_path} to connect $S_i$ with $\bigcup_{j\in K} R_j$ while avoiding already used vertices, where $K$ is the collection of $j$ where $R_j$ is not used by any previously built paths from $S_i$. This will add one more path while ensuring all paths from $S_i$ has their other ends in different $R_j$'s.
	Repeating this will produce more paths and we obtain a desired unit at the end (see Claim~\ref{disj_units}). \newline

\noindent {\bf Proof sketches of Lemmas~\ref{lem_d2_expander}~and~\ref{lem_nc_expander}} 
	For Lemma \ref{lem_d2_expander} and Lemma \ref{lem_nc_expander}, we will construct many internally disjoint $(h_4, h_5, h_1, h_2, h_3)$-webs with appropriate parameters instead of units. 
	If we have $t$ many internally disjoint webs with large exteriors, we can apply Lemma~\ref{short_path} to greedily connect them while avoiding all the previous paths and cores of webs.

	In this greedy connection, one difference from the proof of Lemma~\ref{lem_dense} is the following: when we build paths to connect the centers of the internally disjoint webs, we only avoid using the vertices in the cores of webs. Each path may intersect with the interior of other webs. Indeed, the interiors of the webs are too large to avoid completely. However, as each unit in the web has many branches, these units can still provide the desired connection even if some vertices are already used by previously built paths. 
	So, we delete the units only when too many vertices were used, and we delete the webs only when too many units in the web have been deleted.
	In this way, we are able to show that we only delete $o(t)$ webs in this process and  get a desired $K_{t}$-subdivision.

Note that Lemma~\ref{lem_d2_expander} deals with the case where $t$ is very close to $d$
 and Lemma~\ref{lem_nc_expander} deals with the case where $t$ is very close to $\sqrt{\frac{|H|}{\log |H|}}$.
So, the degree constraint is more serious obstacle in Lemma~\ref{lem_d2_expander}, while the space constraint is the main problem in Lemma~\ref{lem_nc_expander}.

	In order to build $(h_4,h_5,h_1,h_2,h_3)$-webs, we will first find many disjoint stars $K_{1,h_2}$ and build many disjoint $(h_1,h_2,h_3)$-units from the stars, and then we take more stars $K_{1,h_4}$ and build $(h_4,h_5,h_1,h_2,h_3)$-webs by finding short paths from the stars $K_{1,h_4}$ to the $(h_1,h_2,h_3)$-units. Here, we want $h_4=\Omega(t)$ as the center of webs will correspond to the core vertices of the final subdivision. Unlike the case of Lemma~\ref{lem_dense}, $h_1$ does not have to be too large.
	
	In Lemma~\ref{lem_nc_expander}, the degree constraint is less severe, so we can use the definition of robust expander (see Lemma~\ref{lem_avg_deg}) to find many stars $K_{1,h_2}$ and $K_{1,h_4}$ with $h_2, h_4= \frac{d}{(\log n)^{O(1)}}$. In contrast, in Lemma~\ref{lem_d2_expander}, we have strong degree constraint as $t = \frac{d}{(\log\log d)^{O(1)}}$ is very close to $d$. This poses a problem as we want $h_4 = \Omega(t)$, but  the definition of robust expander does not yield many disjoint such large stars $K_{1,h_4}$.
	To resolve this problem, we separate proof into two cases. The first case is when the edges of the graph are uniformly distributed so that $d(G-W) = \Omega(d)$ for every small $W$. In this case, as $d=\Omega(h_4)$, by repeatedly taking a copy of $K_{1,h_4}$, we can easily obtain the desired collection of copies of $K_{1,h_4}$ and use them to find desired webs (see Claim~\ref{claim_uniform_webs}). 
	The second case is when there exists a small set $W$ satisfying $d(G-W) = o(d)$. In such a case, almost all edges are incident with $W$, and the minimum degree condition ensures that $W$ has size at least $\Omega(t)$. 
	Then, by taking stars with center vertices within $W$, we are able to find $\Omega(t)$ many vertex-disjoint stars $K_{1,h_4(\log n)^{\Omega(1)}}$ (see Lemma~\ref{heavy_q}).

    From the obtained vertex disjoint stars $K_{1,h_2}$, we want to build many disjoint $(h_1, h_2, h_3)$-units.
	This is easy in the proof of Lemma~\ref{lem_d2_expander} as the space constraint is not severe. However, for Lemma~\ref{lem_nc_expander}, we only have very tight space to find paths. As we saw before at the beginning of Section~\ref{sec_proof}, paths of length $(\log n)^{\Omega(1)}$ is too long for constructing $\Omega(t)$ internally disjoint webs, so $h_3$ must be $\log n(\log \log n)^{O(1)}$. 
	For this, we can take $h_1 = \log n(\log\log n)^{\Omega(1)}$ and $h_2= \frac{t}{(\log\log n)^{O(1)}}$. 
	In order to obtain paths of length at most $h_3$, we have to overcome the issue that small set expands poorly. For this, we will take many small sets. As they together form a big set, the union enjoys good expansion. From this fact, we want to pick some of them with good expansion. Moreover, we also want some flexibility to control which $S_i$ and $R_j$ we connect so that we can avoid connecting already connected parts again. Overall heuristics of the proof is similar to what we explained in the above proof sketch of Lemma~\ref{lem_dense}, but we need some more technical elements to overcome these issues (see Lemma~\ref{lem_units} and Claim~\ref{cl: units in c100 expander}).
	
	Lastly, we want to build $(h_4,h_5,h_1,h_2,h_3)$-webs from many given $(h_1,h_2,h_3)$-units and many given copies of $K_{1,h_4}$.
	Again this step is easier in Lemma~\ref{lem_d2_expander} as the space constraint is not severe. For Lemma~\ref{lem_nc_expander}, we want to find short paths of length at most $h_5= \log n (\log\log n)^{O(1)}$ between the stars and units. For this, we need to coordinate two different expansions, one from the definition of crux and the other from the sublinear expander theory. Again, using these two expansions in a well-coordinated way, we are able to prove that the process similar to the one in the previous paragraph can be carried out (see Lemma~\ref{lem_webs} and Claim~\ref{cl: webs in c100 expander}).
	 

\section{Technical lemmas}\label{sec_lemmas}
 In this section, we collect some useful lemmas. We write $[n]=\{1,\dots, n\}$. We omit floors and ceilings and treat numbers as integers whenever it does not affect the argument. 

In proving Lemmas~\ref{lem_dense}--\ref{lem_nc_expander}, we want to build units or webs. For this, we first need to be able to find many disjoint stars, which will be the building blocks for our units or webs. The following lemma will be useful for this. It asserts that in an $(\eps, k)$-robust-expander, even if we delete a set of vertices (which will be the vertices in the previously-taken stars or units or webs), the remaining graph still has a large average degree.

	\begin{lemma}\label{lem_avg_deg}
	Suppose $0<\frac{1}{n} \ll \eps <1$ and $k< \frac{n}{10}$.
		Let $G$ be an $n$-vertex $(\eps, k)$-robust-expander. Then 
		for every $W\subseteq V(G)$ with $|W|\leq \frac{1}{20} \rho(n, \eps, k) \cdot n$, we have $d(G- W) \geq \frac{1}{20} \rho(n, \eps, k) \cdot d$. 
	\end{lemma}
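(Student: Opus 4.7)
My plan is to argue by contradiction, choosing the forbidden edge set $F$ in the robust-expansion property so cleverly that the expansion is forced to push too many external neighbours into $W$.

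Write $\rho := \rho(n,\eps,k)$ and suppose toward a contradiction that $d(G - W) < \rho d/20$. Setting $U := V(G)\setminus W$, the bound $|W| \leq \rho n/20 \leq \eps n/20$ guarantees $|U| > n/2$. I would then pick any $X \subseteq U$ with $|X| = \lfloor n/2 \rfloor$; the hypothesis $k < n/10$ ensures $k/2 \leq |X| \leq n/2$, so $X$ has admissible size for the $(\eps,k)$-robust-expander definition.

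The crucial move is to take $F := E(G[U])$, i.e.~exactly the edges of $G-W$. The assumption $d(G-W) < \rho d/20$ translates into $|F| = e(G-W) < \rho d|U|/40 \leq \rho dn/40$. Since $\rho$ is decreasing one has $\rho(|X|) \geq \rho(n) = \rho$, and hence $d\,\rho(|X|)|X| \geq \rho d n/2 > |F|$; so the robust-expansion property applies to $X$ and $F$, giving
\[
|N_{G-F}(X)| \;\geq\; \rho(|X|)\,|X| \;\geq\; \rho n/2.
\]
By the choice of $F$, however, any edge of $G - F$ from $X$ to some $u \in U \setminus X$ would have to lie in $E(G[U]) = F$, a contradiction; so $N_{G-F}(X) \subseteq W$, which forces $|W| \geq \rho n/2$, contradicting $|W| \leq \rho n/20$.

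The only nonobvious step is the selection of $F$. The naive candidate -- all edges incident to $W$ -- is typically far too large to satisfy $|F| \leq d\,\rho(|X|)|X|$. Letting $F$ instead be the edges inside $U$ is feasible precisely because we are assuming $d(G-W)$ is small, which is exactly what we are trying to refute; the hypothetical bad case is thereby turned into the very reason the robust-expansion property can be invoked.
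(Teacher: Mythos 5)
Your proof is correct, and it takes a genuinely different route from the paper's. The paper argues by contradiction via degree distribution: it lets $A$ be a set of exactly $n/10$ vertices of $H := G-W$ with $d_H(v) \le \frac{1}{10}\rho d$ (assuming at least $n/10$ such vertices exist), takes $F$ to be the edges of $H$ incident to $A$, and observes that then $N_{G\setminus F}(A) \subseteq W$, contradicting the expander property; this intermediate step shows that most vertices of $H$ in fact have degree at least $\frac{1}{10}\rho d$, from which the average-degree bound follows. You instead skip the vertex-by-vertex accounting: you take $X$ to be any $\lfloor n/2\rfloor$-subset of $U := V(G)\setminus W$ and delete \emph{all} of $E(G[U])$ as your forbidden edge set $F$, which is permissible precisely because the contradiction hypothesis $d(G-W) < \frac{1}{20}\rho d$ makes $|F| = e(G-W)$ small enough; the containment $N_{G-F}(X) \subseteq W$ is then immediate, with no need to identify a low-degree subset first. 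Your version is a bit more direct and conceptually cleaner, while the paper's version establishes as a byproduct the (unused) stronger statement that at least $n/2$ vertices of $H$ have degree $\ge \frac{1}{10}\rho d$. Both arguments hinge on the same two facts: that $\rho$ is decreasing so $\rho(|X|)\ge\rho(n)$, and that $k < n/10$ keeps the chosen set within the admissible size window $[k/2, n/2]$. Your closing remark that the naive choice $F = \{$edges incident to $W\}$ would typically violate $|F|\le d\rho(|X|)|X|$ is accurate and is exactly why both you and the authors must reach for a less obvious $F$.
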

	\begin{proof}
		Let $H=G - W$. We claim that $H$ contains fewer than $n/10$ vertices $v$ with $d_{H}(v)\leq \frac{1}{10} \rho(n, \eps, k)\cdot d$.  
		Suppose that this is not the case. Then we can take $A$ to be a set of exactly $\frac{n}{10} \geq k$ such vertices. Let $F$ be the set of edges of $H$ incident to a vertex in $A$, then $|F| \leq  \frac{1}{10} \rho(n, \eps, k)d \cdot |A|.$ 
  
  Observe that $N_{G\setminus F}(A)\subseteq W$ as $A$ is an independent set in $H\setminus F = (G - W)\setminus F$. Thus we have 
		$N_{G\setminus F}(A) \leq |W| \leq \frac{1}{20} \rho(n,\eps,k) n,$
		contradicting the assumption that $G$ is an $(\eps,k)$-robust-expander.
Thus at least $\frac{9}{10}n -|W| \geq n/2$ vertices of $H$ have degree at least $\frac{1}{10} \rho(n, \eps, k)d$. Consequently,
		$d(G-W)\geq \frac{1}{n} \cdot \frac{n}{20}\cdot d\rho(n, \eps, k) \geq \frac{1}{20} \rho(n, \eps, k)\cdot d.$
	\end{proof}

In some situations, we need the following lemma to control the maximum degree of the graph. Its proof follows the strategy of Lemma~2.8 in~\cite{Liu2017}; we include it in the arXiv appendix.

	\begin{lemma}\label{lem_bounded_maxdeg}
	Suppose $0<1/d \ll b \ll \eps \ll 1$ and $\eps d \leq k \leq d^2 \log^9 n$. Let $G$ be an $n$-vertex $(\eps, k)$-robust-expander with $\delta(G) \geq \frac{d}{2}$. If $G$ does not contain a subdivision of $K_{bd}$, then $G$ contains an $(\eps/2, k)$-robust-expander subgraph $H$ with at least $\frac{1}{2}n$ vertices such that $\delta(H) \geq \frac{1}{4}d$ and $\Delta(H) \leq 10 d^2 (\log n)^{10}$.
	\end{lemma}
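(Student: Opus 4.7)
My plan is to remove the set $L$ of very-high-degree vertices from $G$ and take $H := G - L$; the assumed absence of a $K_{bd}$-subdivision will force $|L|$ to be so small that all the claimed properties of $H$ transfer easily from $G$. Concretely, define
\[
L := \{v \in V(G) : d_G(v) > 10\, d^2 (\log n)^{10}\}.
\]
Granted that $|L| < bd$, which I argue below, the bound $d \leq n-1$ together with $b \ll 1$ gives $|V(H)| \geq n - bd \geq n/2$; for any $v \in V(H)$ one has $d_H(v) \geq d_G(v) - |L| \geq d/2 - bd \geq d/4$; and $\Delta(H) \leq 10\, d^2(\log n)^{10}$ by the very definition of $L$.

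To justify $|L| < bd$, I assume for contradiction that $L$ contains vertices $v_1, \dots, v_{bd}$ and construct a $K_{bd}$-subdivision with these as core vertices. The branches $P_{ij}$ are built one pair at a time via Lemma~\ref{short_path}: letting $U$ be the set of previously used internal vertices and $W := U \cup \{v_k : k \notin \{i,j\}\}$, I apply Lemma~\ref{short_path} to the sets $X_i := N_G(v_i) \setminus W$ and $X_j := N_G(v_j) \setminus W$. Since each short path contributes at most $(2/\eps)\log^3(15n/k)$ vertices to $U$ and there are $\binom{bd}{2}$ pairs in total, $|U|$ stays at $O(b^2 d^2 \log^3 n / \eps)$ throughout, so $|X_i|, |X_j| \geq 9\, d^2 (\log n)^{10}$. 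Choosing $x = 9\, d^2 (\log n)^{10}$, the hypotheses $x \geq k/2$ (from $k \leq d^2 \log^9 n$) and $|W| \leq x\rho(x,\eps,k)/4$ (from $b \ll \eps$, giving ample slack in a $\log$-polynomial ratio) are both met, so Lemma~\ref{short_path} produces a path of length $O(\log^3 n /\eps)$ between $X_i$ and $X_j$ in $G - W$; extending by the edges to $v_i$ and $v_j$ yields the desired $P_{ij}$. Iterating over all pairs produces the forbidden $K_{bd}$-subdivision.

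It remains to verify that $H$ is an $(\eps/2, k)$-robust-expander. Fix $X \subseteq V(H)$ with $k/2 \leq |X| \leq |V(H)|/2$ and $F \subseteq E(H)$ with $|F| \leq d(H)\, \rho(|X|, \eps/2, k)\, |X|$. The bound $d(H) \leq 2d$ (from $e(H) \leq nd/2$ and $|V(H)| \geq n/2$) combined with the identity $\rho(x, \eps/2, k) = \rho(x, \eps, k)/2$ gives $|F| \leq d\, \rho(|X|,\eps,k)\, |X|$, so the $(\eps,k)$-robust-expansion of $G$ yields $|N_{G-F}(X)| \geq \rho(|X|,\eps,k)\, |X|$. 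Noting that $N_{H-F}(X) = N_{G-F}(X) \setminus L$, we obtain
\[
|N_{H-F}(X)| \geq \rho(|X|,\eps,k)\, |X| - |L|,
\]
so it suffices to show $|L| \leq \rho(|X|,\eps,k)\, |X|/2$ for all valid $X$. This is the main tension in the argument: the subdivision loop only outputs $|L| < bd$, while expansion preservation needs $|L|$ to be small relative to $\rho(|X|,\eps,k)\,|X|$. The saving grace is that $x\, \rho(x,\eps,k)$ is increasing on $x \geq k/2$ with minimum $\Theta(\eps k) \geq \Theta(\eps^2 d)$ (using $k \geq \eps d$), so taking $b$ small enough compared to $\eps$ (the choice $b \leq c\eps^2$ with a small absolute constant $c$, consistent with $b \ll \eps$) forces $bd \leq \rho(|X|,\eps,k)\, |X|/2$ uniformly over all valid $|X|$, and the required expansion of $H$ follows.
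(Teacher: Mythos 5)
Your proposal follows exactly the same three-step strategy as the paper's proof: define $L$ as the set of vertices with degree exceeding $10d^2(\log n)^{10}$, show $|L|<bd$ by greedily connecting any $bd$ such high-degree vertices pairwise via Lemma~\ref{short_path} to force a forbidden $K_{bd}$-subdivision, and then take $H=G-L$ and transfer the robust-expansion from $G$ to $H$ by absorbing $|L|$ into the expansion slack. The size, minimum-degree, and maximum-degree checks, the choice of $x$ on the order of $d^2(\log n)^{10}$ for the path-finding step, and the use of the monotonicity of $x\rho(x,\eps,k)$ together with $k\ge\eps d$ to bound $|L|<bd\le\tfrac12\rho(|X|,\eps,k)|X|$ all mirror the paper.

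One small inaccuracy to tidy up: you justify $d(H)\le 2d$ via ``$e(H)\le nd/2$,'' but the hypothesis only gives $\delta(G)\ge d/2$, not $d(G)\le d$, so $e(G)$ (and hence $e(H)$) need not be bounded by $nd/2$. What you actually need, and what the deletion step gives cleanly, is $d(H)\le 2d(G)$ (from $e(H)\le e(G)$ and $|H|\ge n/2$); combined with $\rho(\cdot,\eps/2,k)=\tfrac12\rho(\cdot,\eps,k)$ this puts any allowed $F$ for $H$ within the allowed budget $d(G)\rho(|X|,\eps,k)|X|$ for $G$, which is exactly the comparison the definition of robust-expander requires. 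The paper is equally loose at this point (writing $d$ where $d(G)$ or $d(H)$ is meant), so this does not affect the validity of the approach.
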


	\subsection{Expansion lemmas}\label{sec:technical-expansion}
Recall that we plan on combining two different types of expansions, one coming from the sublinear expansion and the other coming from the definition of $c(G)$. These two different reasons ensure expansion for sets with different sizes. To efficiently distinguish the expansions on different regimes of the sizes of sets, we introduce the following definition.
	
	\begin{defn}
	Suppose that $0<a\leq b\leq n$ and $\rho >0$.
 	We say that an $n$-vertex graph $G$ has $(a, b, \rho)$-expansion-property if every vertex set $X \subseteq V(G)$ with $|X| \in [a, b]$ satisfies $|N_G(X)| \geq \rho |X|$.
\end{defn}
	Note that an $n$-vertex $(\eps, k)$-robust-expander graph $G$ has $(k, m, \rho(m))$-expansion-property for any $k\leq m\leq n/2$. For this notion of expansion, we have a similar small diameter property as Lemma~\ref{short_path}.
\begin{lemma}\label{lem: short_path_general}
	Suppose that $G$ has $(a, \frac{n}{2}, \rho)$-expansion-property for some $\rho \in (0, 1]$.	Then for every sets $X, Y \subseteq V(G)$ with $|X|, |Y| \geq x \geq 2a$ and $|W| \leq \rho x/4$, there exists a path connecting $X$ and $Y$, which avoids $W$, having length at most $10\rho^{-1}\log(n/x)$. 
\end{lemma}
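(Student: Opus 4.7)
The plan is to adapt the BFS ball-growth argument underlying Lemma~\ref{short_path} to the setting of a constant expansion ratio $\rho$, in place of the scale-dependent function $\rho(\cdot,\eps,k)$. First I would replace $X$ and $Y$ by $X\setminus W$ and $Y\setminus W$; since $|W|\leq \rho x/4\leq x/4$ and $x\geq 2a$, the new sets still have size at least $3x/4\geq \tfrac{3}{2}a\geq a$, so they lie inside the range where the $(a,n/2,\rho)$-expansion property applies. I then define BFS balls inside $G-W$, namely $B_X^{0}:=X\setminus W$ and $B_X^{i+1}:=B_X^{i}\cup (N_G(B_X^i)\setminus W)$, and analogously $B_Y^{j}$ starting from $Y\setminus W$, keeping track of a BFS shortest-path tree in each case.

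The heart of the argument is the growth claim: while $a\leq |B_X^i|\leq n/2$, one has $|B_X^{i+1}|\geq (1+2\rho/3)\,|B_X^i|$. By the $(a,n/2,\rho)$-expansion property, $|N_G(B_X^i)|\geq \rho |B_X^i|$; and since $|B_X^i|\geq |X\setminus W|\geq 3x/4$ throughout the growth phase, we have $|W|\leq \rho x/4\leq \tfrac{1}{3}\rho|B_X^i|$. Hence
$|B_X^{i+1}|\geq |B_X^i|(1+\rho)-|W|\geq |B_X^i|(1+2\rho/3).$
Using the elementary inequality $\log(1+2\rho/3)\geq \rho/3$ for $\rho\in(0,1]$, the ball $B_X^i$ reaches size exceeding $n/2$ after at most $(3/\rho)\log(2n/(3x))\leq 5\rho^{-1}\log(n/x)$ steps, and the same bound holds for $B_Y^j$.

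Once $|B_X^i|,|B_Y^j|>n/2$, the sum of their sizes exceeds $n\geq |V(G-W)|$, so they must share some vertex $v$. Concatenating the BFS paths from $X$ to $v$ and from $v$ to $Y$ inside $G-W$ then yields a path avoiding $W$ of length at most $i+j\leq 10\rho^{-1}\log(n/x)$, as required. The only subtle point is the bookkeeping on how much $W$ eats into each expansion step; the argument works precisely because the budget $|W|\leq \rho x/4$ remains a small constant fraction of $\rho|B_X^i|$ throughout, which is enforced by the seed size $|X\setminus W|\geq 3x/4$. Everything else is a routine translation of the Koml\'os--Szemer\'edi ball-growing proof of Lemma~\ref{short_path} to the new expansion notion.
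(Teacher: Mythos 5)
Your argument is correct and is exactly the route the paper intends: the paper gives no written proof of this lemma, stating only that it follows by the same Koml\'os--Szemer\'edi ball-growing argument as Lemma~\ref{short_path}, which is precisely what you carry out (grow BFS balls in $G-W$ from $X\setminus W$ and $Y\setminus W$, use the $(a,n/2,\rho)$-expansion together with $|W|\le\rho x/4\le\tfrac13\rho|B_X^i|$ to obtain the multiplicative growth factor $1+2\rho/3$, and conclude once both balls exceed $n/2$). The only nitpick is a benign off-by-one in the step count (the first ball to cross $n/2$ may require one extra step beyond $3\rho^{-1}\log(2n/(3x))$), which is comfortably absorbed by the slack in the factor $10$.
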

\begin{proof}
    For every $r \geq 0$, we have either $|B_{G-W}^r(X)| > n/2$ or 
    $$|B_{G-W}^{r+1}(X\setminus W)| \geq |B_{G-W}^{r}(X\setminus W)| + |N_G(B_{G-W}^{r}(X\setminus W))| - |W| \geq \left(1+\frac{\rho}{2}\right)|B_{G-W}^{r}(X\setminus W)|.$$
    Thus, there exists $r \leq 5\rho^{-1}\log(n/x)$ such that $|B_{G-W}^{r}(X\setminus W)| > n/2$.
\end{proof}

	In many cases, our plan for building units (or webs) are as follows. Let $W$ be the set of vertices in previously built units (or webs). 
	Take many disjoint stars (or units) and let $X$ be the vertices in them. While one star (or unit) $S$ is much smaller than $W$, the union $X$ is much larger than $W$ so that we can show $B^r_{G-W}(X)$ is large.
	As $B^{r}_{G-W}(X)$ is large, there must be many stars (or units) $S$ where $B^{r}_{G-W}(S)$ is large. Then these stars (or units) $S$ together with $B^r_{G-W}(S)$ are good building blocks for constructing additional units (or webs).
	
	In this subsection, we will prove several lemmas that provide a single small set $A_i$ from a large collection $A_1,\ldots,A_q$ such that $B^{r}_{G-W}(A_j)$ is large. In fact, for each $j$, we want to ensure $B^{r}_{G-W}(A_j)$ is large even when there is a specific set $W_j$ that $B^1_{G-W}(A_j)$ has to avoid.

\begin{lemma} \label{collectively_large}
Let $a,b,q,k \in \mathbb{N}$ and $x, \rho \in \mathbb{R}$ satisfying $a\leq kx \leq \frac{b}{3(1+\rho/4)}$. 
    Let $G$ be an $n$-vertex graph having $(a, b, \rho)$-expansion-property.
    Suppose that we have vertex sets $W,W_1,\dots, W_q$, and $A_1, \dots A_q \subseteq V(G)$ with $|W| \leq \frac{1}{2}\rho kx$ satisfying the following:
\stepcounter{propcounter}
\begin{enumerate}[label = {\bfseries \emph{\Alph{propcounter}\arabic{enumi}}}]
        \item For each $i\in [q]$, we have
        $|A_i|\geq x$ and $|W_i| \leq \frac{\rho x}{4}$.\label{mandu1}
        \item For each $i\in [q]$, we have $W_i \cap A_i = \varnothing$ and $W \cap A_i = \varnothing$.\label{mandu2}
        \item For each $I\subseteq [q]$ with $k\leq |I|\leq 3k$, we have $|\bigcup_{i \in I} A_i| \geq |I|x$.\label{collectively_large_B}
    \end{enumerate} 
    Then, there exists $J \subseteq [q]$ with $|J|\geq q-k$ such that the following holds.
\stepcounter{propcounter}
\begin{enumerate}[label = {\bfseries \emph{\Alph{propcounter}\arabic{enumi}}}]
        \item For each $j \in J$, we have $|B^1_{G - (W\cup W_j)}(A_j)| \geq (1+\rho/4)x$. \label{each_large}
        \item For each $I\subseteq J$ with $k\leq |I|\leq 3k$, we have $\left|\bigcup_{i \in I}B^1_{G - (W\cup W_i)}(A_i)\right| \geq (1+\rho/4)|I|x$. \label{collectively_large_2}
    \end{enumerate}
\end{lemma}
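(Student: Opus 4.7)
My plan is to set $B := \{j \in [q] : |B^1_{G-(W\cup W_j)}(A_j)| < (1+\rho/4)x\}$ and take $J := [q]\setminus B$. Then property \ref{each_large} is built into the definition of $J$, so everything reduces to proving the single key estimate
$$\Bigl|\bigcup_{i\in I} B^1_{G-(W\cup W_i)}(A_i)\Bigr| \geq (1+\rho/4)\,|I|\,x \qquad\text{for every } I\subseteq [q] \text{ with } k \leq |I| \leq 3k. \qquad (\star)$$
Indeed, $(\star)$ applied to any $I \subseteq J$ is exactly \ref{collectively_large_2}; and if $|B| \geq k$, picking any $I \subseteq B$ with $|I|=k$ yields the crude upper bound $\sum_{i\in I}|B^1_{G-(W\cup W_i)}(A_i)| < k(1+\rho/4)x$, which contradicts $(\star)$. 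Hence $|B| < k$ and $|J| \geq q-k$ follow.

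To prove $(\star)$, let $U := \bigcup_{i\in I} A_i$; by \ref{collectively_large_B}, $|U| \geq |I|x \geq kx \geq a$. If $|U| > b$, then since each $A_i$ is disjoint from $W\cup W_i$ by \ref{mandu2}, the union already contains $U$, and therefore has size at least $|U| > b \geq 3(1+\rho/4)kx \geq (1+\rho/4)|I|x$, using $|I|\leq 3k$ together with the hypothesis $kx \leq b/(3(1+\rho/4))$. Otherwise $|U| \in [a,b]$, so the $(a,b,\rho)$-expansion property gives $|B^1_G(U)| \geq (1+\rho)|U|$; combining with $|W| \leq \frac{\rho kx}{2} \leq \frac{\rho |I|x}{2}$ (since $|I|\geq k$),
$$\bigl|B^1_G(U) \setminus W\bigr| \geq (1+\rho)|I|x - \tfrac{\rho}{2}|I|x = \bigl(1+\tfrac{\rho}{2}\bigr)|I|x.$$

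The subtle step is passing from the estimate on $B^1_G(U)\setminus W$ to the union appearing in $(\star)$, and this is where I expect the main care to be needed. A vertex $v \in B^1_G(U)\setminus W$ that fails to belong to $\bigcup_i B^1_{G-(W\cup W_i)}(A_i)$ must lie outside $U$, must have at least one neighbor in some $A_i$ (as $v \in B^1_G(U)$), yet must also satisfy $v \in W_i$ for \emph{every} such index $i$ (otherwise $v$ would contribute to the union through that particular branch). Therefore the set of missing vertices is contained in $\bigcup_{i \in I} W_i$, which by \ref{mandu1} has size at most $|I|\rho x/4$. Combining,
$$\Bigl|\bigcup_{i \in I} B^1_{G-(W\cup W_i)}(A_i)\Bigr| \geq \bigl(1+\tfrac{\rho}{2}\bigr)|I|x - \tfrac{|I|\rho x}{4} = \bigl(1+\tfrac{\rho}{4}\bigr)|I|x,$$
which establishes $(\star)$. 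Once the discrepancy analysis above is in place, the remaining calculations are routine applications of the expansion hypothesis and the size bounds $|W|\leq \rho kx/2$, $|W_i| \leq \rho x/4$.
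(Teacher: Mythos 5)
Your proof is correct, and the core estimate---applying the $(a,b,\rho)$-expansion property to $A_I=\bigcup_{i\in I}A_i$ and then subtracting $|W|$ and $|\bigcup_{i\in I}W_i|$---is the same one the paper uses to prove \emref{collectively_large_2}. What differs is how you obtain $|J|\geq q-k$. You observe that your estimate $(\star)$ holds for \emph{every} $I\subseteq[q]$ with $k\leq|I|\leq 3k$, not only those inside $J$, and then a simple union bound over any size-$k$ subset of the bad set $B$ forces $|B|<k$. The paper instead runs a separate averaging argument for \emref{each_large}: it fixes a size-$k$ set $K$ of bad indices, uses the badness itself to guarantee $|A_K|\leq\sum_{i\in K}|B^1_{G-(W\cup W_i)}(A_i)|<(1+\rho/4)kx\leq b$, applies expansion to $A_K$ to get $|N_{G-(W\cup W_K)}(A_K)|\geq\rho kx/4$, and then averages over $i\in K$ to find one index that is not actually bad. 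Your unification is slightly cleaner in that it proves one inequality and reads both conclusions off it; the price is that you must treat the $|U|>b$ case inside $(\star)$, whereas the paper's argument for \emref{each_large} sidesteps this automatically because the upper bound on $|A_K|$ comes for free from the badness of $K$. Both are valid, and the underlying expansion calculation is identical.
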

\begin{proof}
    For \emref{each_large}, it suffices to show that $|B^1_{G -(W \cup W_i)}(A_i)| < (1+\rho/4)x$ holds for at most $k$ indices $i\in [q]$. Suppose  there are at least $k$ such indices and let $K$ be a set of exactly $k$ such indices. For each $I\subseteq [q]$, we write $A_I =\bigcup_{i\in I} A_i$ and $W_I = \bigcup_{i\in I} W_i$.
		
    By the assumption and \emref{collectively_large_B}, we have 
    $$|A_K| \geq kx \geq a \enspace\text{ and }\enspace
    |A_K|  \leq \sum_{i \in K} |B^1_{G - (W \cup W_i) }(A_i)| <\Big(1+\frac{\rho}{4}\Big) kx \leq b.$$
    As $G$ has the $(a, b, \rho)$-expansion property, this ensures that 
    $$|N_{G - (W\cup W_K)}(A_K)| \geq |N_G(A_K)| - |W|- |W_K| \geq \rho kx - |W| -  |W_K|  \geq \rho kx/4.$$
    By averaging, there exists $i \in K$ such that 
    $|N_{G - (W\cup W_K)}(A_i)|\geq x \rho/4$. Hence we have
    $$|B^1_{G-(W\cup W_i)}(A_i)|\geq |B^1_{G-(W\cup W_K)}(A_i)| \geq |A_i| + x \rho /4  \geq (1+ \rho/4 ) x, $$ 
    a contradiction to the choice of $K$. 
    Therefore all but less than $k$ indices $i \in [q]$ satisfies 
    $|B^1_{G -(W \cup W_i)}(A_i)| \geq (1+\rho/4)x$.
    Let $J$ be the set of such $i\in [q]$ for which \emref{each_large} holds.

    To see \emref{collectively_large_2}, choose a subset $I \subseteq J$ of size at least $k$ and at most $3k$. 
    If $|A_I|  \geq b$, then \emref{collectively_large_2} is trivial as $b \geq (1+\rho/4) |I|x$. Otherwise, from the expansion property of $G$, \emref{mandu1} and \emref{collectively_large_B}, we have 
    $$\Big|\bigcup_{i\in I} B_{G- (W\cup W_i)}^1(A_i)\Big| \geq |A_I| + |N_{G}(A_I)|-|W|-|W_I| \geq |I|x + |I|\rho x  - |W|-|W_I| \geq |I|x (1+\rho/4).$$
    Hence \emref{collectively_large_2} holds.
\end{proof}
By repeatedly applying the above lemma, we can show that in an expander, among many disjoint sets $S_1, \cdots, S_q$, most of the sets have the exponential expansion, i.e. $B_{G-(W \cup W_i)}^r(S_i)$ grows exponentially in terms of $r$. This is captured in the following lemma.

\begin{lemma}\label{many_large_sets}
Let $q, k, r, x \in \mathbb{N}$ and $\rho \in \mathbb{R}$ with $q >2kr$, $a \leq kx < \frac{b}{20}$, and $k \geq (1+\rho/4)^r$.
 Suppose that $G$ has $(a, b, \rho)$-expansion property and we have vertex sets $W,W_1,\dots, W_q$, $A_1, \dots A_q \subseteq V(G)$ with $|W| \leq \frac{1}{2} \rho kx$ satisfying \emref{mandu1}--\emref{collectively_large_B}. 
    Then, there exists $J \subseteq [q]$ with $|J|\geq q-2kr$ such that for each $j \in J$, we have $|B^r_{G - (W\cup W_j)}(A_j)| \geq (1+\rho/4)^rx$.
\end{lemma}
\begin{proof}
 For each $0 \leq \ell \leq r$ and $i \in [q]$, let
 $$x_\ell:=(1+\rho/4)^\ell x, \enspace k_\ell:=\left\lceil \frac{k}{(1+\rho/4)^\ell} \right\rceil \enspace \text{ and } \enspace A_i^\ell := B^\ell_{G - (W\cup W_i)}(A_i).$$
 For $J\subseteq [q]$, let $A_J^{\ell} = \bigcup_{j\in J} A_j^{\ell}$.
 Let $I_0=[q]$ and we inductively construct $I_\ell$ satisfying the following.
 \stepcounter{propcounter}
  \begin{enumerate}[label = {\bfseries \Alph{propcounter}\arabic{enumi}}]
	\item$\mkern-10mu {}_{\ell}$ $I_\ell \subseteq I_{\ell-1}$ and $|I_\ell| \geq |I_{\ell-1}| - 2k$; \label{condition_2}
	\item$\mkern-10mu {}_{\ell}$ for each $i \in I_\ell$, we have $|A^{\ell}_i| \geq x_\ell$; \label{condition_3}
	\item$\mkern-10mu {}_{\ell}$ for each $J \subseteq I_{\ell}$ with $k_\ell \leq |J| \leq 3k_{\ell}$, we have $|A_J^{\ell}| \geq |J| x_\ell$. \label{condition_4}
  \end{enumerate}
  When $\ell=0$, \emref{mandu1} and \emref{collectively_large_B} imply that \ref{condition_3}$_0$ and \ref{condition_4}$_0$ hold.
  Now, assume that we already construct $I_{\ell}$ satisfying three conditions for some $\ell<r$.
  
  Note that $kx \leq x_\ell k_\ell\leq 2kx < \frac{b}{6}$. 
So we can apply Lemma~\ref{collectively_large} with $x_\ell$, $k_\ell$ and $A_i^\ell$'s for $i \in I_\ell$ playing the roles of $x,k$, and $A_i$, respectively, to obtain a subset $I'_{\ell+1} \subseteq I_\ell$ of size at least $|I_\ell|- k_{\ell} \geq |I_\ell|-k$ such that for each $i\in I'_{\ell+1}$, we have $|A^{\ell+1}_i|\geq x_{\ell+1}$, hence \ref{condition_3}$_{\ell+1}$ holds; and 
\begin{itemize}
    \item[$(\ast)$] for each $J\subseteq I'_{\ell+1}$ with $k_{\ell} \leq |J|\leq 3k_{\ell}$, we have $|\bigcup_{i\in J} A^{\ell+1}_i |\geq |J| x_{\ell+1}$. 
\end{itemize}

  If $I'_{\ell+1}$ satisfies \ref{condition_4}$_{\ell+1}$ for all sets $J \subseteq I'_{\ell+1}$ with $k_{\ell+1} \leq |J| \leq 3k_{\ell+1}$, then we are done by taking $I_{\ell+1}=I'_{\ell+1}$.
  If not, there is a set $J \subseteq I'_{\ell+1}$ with $k_{\ell+1} \leq |J| < 3k_{\ell+1}$ and $|A_J^{\ell+1}| < |J| x_{\ell+1}$.
  By ($\ast$), we can further assume that $|J| < k_\ell$.
  Let $I_{\ell+1} = I'_{\ell+1} \setminus J$.
  Then for any $J' \subseteq I_{\ell+1}$ with $k_{\ell+1} \leq |J'| < k_{\ell}$, we have $|J\cup J'|\geq 2k_{\ell+1} > k_{\ell}$, so ($\ast$) implies
  $$ |A_{J'}^{\ell+1}| \geq |A_{J \cup J'}^{\ell+1}| - |A_J^{\ell+1}| \geq |J \cup J'|x_{\ell+1} - |J|x_{\ell+1} \geq |J'|x_{\ell+1}.$$
  This together with ($\ast$) entails that \ref{condition_4}$_{\ell+1}$ holds with this choice of $I_{\ell+1}$. Note that $|I_{\ell+1}|\geq |I_\ell|-k_{\ell} - |J|\geq |I_{\ell}|-2k$, hence \ref{condition_2}$_{\ell+1}$ also holds. This proves the lemma.
\end{proof}
The following corollary provides a form which is convenient for us to use.
\begin{cor}\label{cor_single_large_set}
Let $G$, $a, b, x, \rho$, $A_i, W_i, W$ be as in Lemma~\ref{many_large_sets}, $q > 20k\log k/\rho$, $r \geq 10\log k/\rho$, and $0<\rho<1$.
 Then there exists $i \in [q]$ such that $|B_{G-(W \cup W_i)}^r(A_i)| \geq kx/2$.
\end{cor}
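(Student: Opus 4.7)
The plan is to reduce the corollary to Lemma~\ref{many_large_sets} by choosing an intermediate radius $r^*$ that is simultaneously small enough to satisfy $k \geq (1+\rho/4)^{r^*}$ (so the lemma applies) and large enough to satisfy $(1+\rho/4)^{r^*} \geq k/2$ (so the quantitative conclusion is strong enough), and then upgrade from $r^*$ to the actual $r$ by simple monotonicity of the balls $B^{\cdot}$. The only delicate point is to make sure one integer $r^*$ lies in the narrow window where both inequalities are satisfied.

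First I would set $r^* = \lceil \log(k/2)/\log(1+\rho/4) \rceil$. By construction $(1+\rho/4)^{r^*} \geq k/2$, while $(1+\rho/4)^{r^*-1} < k/2$ gives
$$(1+\rho/4)^{r^*} < \frac{k(1+\rho/4)}{2} \leq \frac{5k}{8} < k,$$
using the assumption $\rho \leq 1$; hence $k \geq (1+\rho/4)^{r^*}$ as well. Combining with the elementary inequality $\log(1+y) \geq y/2$ for $0 < y \leq 1$, which yields $\log(1+\rho/4) \geq \rho/8$, I get
$$r^* \leq 1 + \frac{\log(k/2)}{\rho/8} \leq \frac{10 \log k}{\rho} \leq r.$$
In particular the corollary's hypothesis $q > 20 k \log k/\rho$ implies $q > 2 k r^*$, so all the numerical hypotheses of Lemma~\ref{many_large_sets} are met with $r^*$ playing the role of $r$. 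The structural hypotheses on $W$, $W_i$, $A_i$ and the range $a \leq kx < b/20$ are inherited verbatim from the corollary.

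Applying Lemma~\ref{many_large_sets} with parameter $r^*$ then produces a subset $J \subseteq [q]$ with $|J| \geq q - 2kr^* \geq 1$, such that for every $j \in J$,
$$|B^{r^*}_{G - (W \cup W_j)}(A_j)| \geq (1+\rho/4)^{r^*} x \geq \tfrac{kx}{2}.$$
Picking any $i \in J$ and using $B^r_{G - (W \cup W_i)}(A_i) \supseteq B^{r^*}_{G - (W \cup W_i)}(A_i)$ whenever $r \geq r^*$ completes the proof. The main (mild) obstacle is exactly the calibration of $r^*$: Lemma~\ref{many_large_sets} cannot be invoked directly with the given $r$ because $r$ is much larger than $\log k / \log(1+\rho/4)$, and the conclusion at the maximal admissible radius is what is being packaged here; the hypothesis $r \geq 10 \log k/\rho$ in the corollary is tailored so that $r^*$ fits below it with room to spare.
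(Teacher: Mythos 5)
Your proof is correct and takes essentially the same approach as the paper: both identify an intermediate radius in the window where $(1+\rho/4)^{\cdot}$ lies between $k/2$ and $k$ so that Lemma~\ref{many_large_sets} applies and yields the bound $\geq kx/2$, and then pass from this intermediate radius to the given $r$ by monotonicity of $B^{\cdot}$. The only cosmetic difference is that the paper takes $r'$ to be the \emph{largest} integer with $(1+\rho/4)^{r'} \leq k$ while you take the ceiling of $\log(k/2)/\log(1+\rho/4)$, but the two choices serve the identical purpose and the surrounding calculations are equivalent.
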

\begin{proof}
Let $r'$ be the largest $r$ such that $(1+\rho/4)^{r'}\leq k$ is true, then $(1+\rho/4)^{r'} \geq k/2$ as $(1+\rho/4)\leq 2$. 
 Then as $q-2kr'\ge 1$, by Lemma \ref{many_large_sets}, there exists an index $i \in [q]$ such that $|B_{G-(W \cup W_i)}^{r'}(A_i)| \geq x(1+\rho/4)^{r'} \geq kx/2$.
As $(1+\rho/4)^r \geq k$, we have $r'\leq r$, hence $|B_{G-(W \cup W_i)}^r(A_i)| \geq |B_{G-(W \cup W_i)}^{r'}(A_i)| \geq kx/2$.
\end{proof}

The following lemma will be useful for building webs later. 
This shows that even if our expansion is weak, if the sets $W_i$ we want to avoid are not so congested around $A_i$, then we can find an index $i$ such that $B_{G-(W \cup W_i)}^r(A_i)$ is large.

\begin{lemma}\label{single_large_set}
Let $\rho\in (0,1)$ and suppose $G$ is an $n$-vertex graph with $(a, b, \rho)$-expansion-property and $q, x\in \mathbb{N}$ with $a \leq qx \leq \frac{b}{4}$.
 Let $W, W_1, W_2, \cdots, W_q,$ $A_1, A_2, \cdots, A_q$ be sets of vertices with $|A_i| \geq x$. Let $A_i^\ell = B_{G-(W \cup W_i)}^{\ell}(A_i)$ and suppose
 \stepcounter{propcounter}
 \begin{enumerate}[label = {\bfseries \emph{\Alph{propcounter}\arabic{enumi}}}]
         \item $A_i \cap A_j = \varnothing$ if $i \neq j$.\label{A-disj}
		 \item $A_i \cap W_i = \varnothing$ and $A_i \cap W = \varnothing$ for all $i \in [q]$.
		 \item $|W| \leq \frac{1}{2} \rho qx$ and for every $\ell \geq 1$, we have $|W_i \cap N_{G-W}(A_i^{\ell-1})| \leq \frac{1}{40}\ell x\rho^2$.\label{eq: thin layer}
 \end{enumerate} 
 Then, for $r = \frac{50\log q}{\rho}$, there exists $i \in [q]$ such that $|A_i^{r}| \geq qx$.
\end{lemma}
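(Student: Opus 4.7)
My plan is to argue by contradiction: suppose $|A_i^r|<qx$ for every $i\in[q]$, and set $U_\ell:=\bigcup_{i\in[q]}A_i^\ell$. By the disjointness of the $A_i$'s we have $|U_0|\geq qx$, and under the contradictory hypothesis $\sum_i|A_i^\ell|<q^2 x$ for every $\ell\leq r$, so in particular $|U_\ell|<q^2 x$. The goal is to force $|U_r|\geq q^2 x$ via expansion, giving the contradiction via pigeonhole on $\sum_i|A_i^r|$.

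The key ingredient is a one-step growth estimate. Any $v\in N_{G-W}(U_\ell)$ is a neighbour (in $G-W$) of some $A_j^\ell$, and lies outside $U_{\ell+1}$ only if $v\in W_j$ for \emph{every} such $j$; hence $N_{G-W}(U_\ell)\setminus U_{\ell+1}\subseteq\bigcup_i(W_i\cap N_{G-W}(A_i^\ell))$, a set of size at most $q(\ell+1)x\rho^2/40$ by the layer-thinness hypothesis. Together with $|W|\leq\rho qx/2$ and the $(a,b,\rho)$-expansion property applied to $U_\ell$ (valid while $|U_\ell|\in[a,b]$), this yields
\[|U_{\ell+1}|\;\geq\;(1+\rho)|U_\ell|-\tfrac{\rho qx}{2}-\tfrac{q(\ell+1)x\rho^2}{40}.\]

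I would then show inductively that $|U_\ell|\geq T_\ell:=qx(1+\rho/4)^\ell$ for all $\ell\leq r$ (as long as $|U_\ell|\leq b$). The base case is immediate, and the inductive step reduces via the recurrence to the elementary inequality
\[(1+\rho/4)^\ell\cdot\tfrac{3}{4}\;\geq\;\tfrac{1}{2}+\tfrac{(\ell+1)\rho}{40},\]
which holds at $\ell=0$ and is preserved for $\ell\geq 1$ because the LHS grows exponentially in $\ell$ while the RHS only linearly. Taking $\ell=r=50\log q/\rho$ and using $\log(1+\rho/4)\geq\rho/5$ for $\rho\in(0,1)$, one gets $T_r\geq qx\cdot e^{r\rho/5}= qx\cdot q^{10}\gg q^2 x$; since the hypothesis bounds $|U_r|\leq\sum_i|A_i^r|<q^2 x$, this is a contradiction.

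The main obstacle is the boundary regime in which $|U_\ell|$ reaches $b$ before $T_\ell$ has grown past $q^2x$ (which can happen under $qx\leq b/4$ whenever $b<q^2x$). There the full-union expansion step stops applying. I would address this by switching to an individual-expansion phase: once $|U_\ell|\geq b/2\geq 2qx$, a pigeonhole selects $i^*$ with $|A_{i^*}^\ell|\geq b/(2q)\geq 2x$, and one iterates the expansion of $A_{i^*}^\ell$ directly, noting that both $|W|\leq\rho qx/2$ and the layer bound $|W_{i^*}\cap N_{G-W}(A_{i^*}^{\ell'-1})|\leq\ell' x\rho^2/40$ become negligible relative to $\rho|A_{i^*}^{\ell'}|$ once $|A_{i^*}^{\ell'}|$ passes $qx$, so a direct single-index iteration drives $|A_{i^*}^r|\geq qx$ within the remaining $O(\log q/\rho)$ steps. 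Coordinating this transition, and verifying that the per-layer $W_i$ bound continues to play the role of the uniform $|W_i|\leq\rho x/4$ hypothesis used in the proofs of Lemmas~\ref{collectively_large}--\ref{many_large_sets}, is the technical heart of the argument.
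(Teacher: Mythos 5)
Your one-step growth estimate for the full union $U_\ell=\bigcup_i A_i^\ell$ is correct: the recurrence $|U_{\ell+1}|\geq(1+\rho)|U_\ell|-\rho qx/2-q(\ell+1)x\rho^2/40$ and the elementary inequality you deduce from it are both valid, and this does give geometric growth $|U_\ell|\geq qx(1+\rho/4)^\ell$ \emph{as long as the expansion property applies to $U_\ell$, i.e.\ as long as $|U_\ell|\leq b$}. But your handling of the boundary regime is where the argument breaks, and this is not a minor technicality: the hypothesis only gives $qx\leq b/4$, so $b$ may be as small as $4qx$, and the full-union phase necessarily stalls essentially immediately (after $O(1/\rho)$ steps) while $|U_\ell|$ is still far below $q^2x$. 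Your proposed fix --- pigeonhole to pick $i^*$ with $|A_{i^*}^\ell|\geq b/(2q)$ and then iterate single-index expansion --- fails for two reasons. First, $b/(2q)$ need only be $\geq 2x$, and $2x$ may well be below $a$, so the expansion hypothesis may not even apply to $A_{i^*}^\ell$. Second, and more fundamentally, the error term $|W|\leq\rho qx/2$ in the single-index expansion step is of order $\rho qx$, which completely dominates $\rho|A_{i^*}^\ell|=O(\rho x)$ when $|A_{i^*}^\ell|$ is still only $O(x)$. Your claim that the errors "become negligible once $|A_{i^*}^{\ell'}|$ passes $qx$" is circular: reaching $qx$ is exactly the conclusion you are trying to prove, and single-index expansion cannot drive a set of size $O(x)$ up to $qx$ against an obstruction of size $\rho qx/2$.

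The paper's proof resolves precisely this tension by tracking neither the full union (too large for expansion) nor a single $A_i^\ell$ (too small to beat $|W|$), but a union $A_{I_\ell}^{r_\ell}=\bigcup_{i\in I_\ell}A_i^{r_\ell}$ over a shrinking index set $I_\ell$ chosen so that the union size always stays in $[qx,4qx]\subseteq[a,b]$. This window is large enough that the $\rho|W|$-loss and the per-layer $W_i$-losses are beaten by expansion, yet small enough to stay inside the expansion regime. When the union grows past $4qx$, the paper repartitions $I_\ell$ into blocks with union size in $[qx,3qx]$ and, via an averaging argument, passes to a block that preserves the accumulated geometric gain (encoded in the invariant $|A_{I_\ell}^{r_\ell}|\geq(\tfrac14\rho r_\ell+1)x|I_\ell|$). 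Because $|I_\ell|$ drops by a factor of at least $3/4$ at each round and each round costs at most $10/\rho$ layers, after at most $\log_{4/3}q$ rounds one is forced to $|I_s|=1$ with $r_s\leq r$, giving $|A_i^{r}|\geq qx$ for the surviving index. You would need to import this partition-and-select mechanism to bridge the gap in your argument; without it, the proof does not close.
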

\begin{proof}
Assume that we have $|A_i^r|<qx$ for each $i\in [q]$. We will derive a contradiction from this assumption to prove the lemma.

For each $I\subseteq [q]$ and $\ell \in [r]$, let $A_I^{\ell} = \bigcup_{i\in I} A_i^{\ell} \enspace \text{and} \enspace W_I = \bigcup_{i\in I} W_i.$
 By deleting vertices if necessary, we assume that $|A_i|=x$ for all $i\in[q]$ and so by \emref{A-disj}, $|A_{[q]}|=qx$. Let $I_0=[q]$ and $r_0=0$. Consider a collection $I_0,\dots, I_s$ of nonempty sets and numbers $r_0,\dots, r_s$ satisfying the following for all $\ell\in [s]$. Among all possible collections, we consider one with maximum $s$. 
 \stepcounter{propcounter}
 \begin{enumerate}[label = {\bfseries \Alph{propcounter}\arabic{enumi}}]
	\item $|I_{\ell}| \leq \frac{3}{4}|I_{\ell-1}|$ if $\ell \geq 1$. \label{condition_size}
    \item $r_{\ell} \leq r_{\ell-1} + \frac{10}{\rho}$ if $\ell\geq 1$. \label{condition_rell}
	\item $|A_{I_\ell}^{r_\ell}| \geq \max\{ qx, ( \frac{1}{4} \rho r_\ell + 1) x|I_{\ell}| \}$. \label{condition_5}
	\item $|A_{I_\ell}^{r_\ell}| \leq 4 qx $. \label{condition_6}
 \end{enumerate}
 Such a choice of $I_0,\dots, I_s$ exists as the trivial collection $\{ I_0 \}$ itself satisfies \ref{condition_5} and \ref{condition_6} for $\ell=0$. 

For such a maximum choice, \ref{condition_size} and \ref{condition_rell} ensures that $r_s\leq \frac{50\log{q}}{\rho}= r$. Hence, our initial assumption together with \ref{condition_5} implies that $|I_s|>1$. We will derive a contradiction to the maximality of $s$. Let $\ell \in \mathbb{N}$ be the minimum number satisfying $|A^{r_{s}+\ell}_{I_s}| > 4qx,$
if it exists; otherwise let it be $\infty$.

We first show that for each $0 \leq \ell' \le \min\{\ell, \frac{10}{\rho} \}$, we have
\begin{align}\label{eq: expand rho4 ell'}
    |A_{I_s}^{r_s+\ell'}|\geq (1+\rho/4)^{\ell'}|A_{I_s}^{r_s}|.
\end{align}
If $\ell'=0$, this is vacuous. Otherwise, $qx \leq |A_{I_s}^{r_s}| \leq |A_{I_s}^{r_s+\ell'-1}|\leq  4qx$ and $[qx,4qx]\subseteq [a,b]$, $(a,b,\rho)$-expansion property of $G$ implies $|N_{G-W}(A^{r_s+\ell'-1}_{I_s})| \geq \rho|A^{r_s+\ell'-1}_{I_s}|-|W| \geq \frac{\rho}{2}|A^{r_s+\ell'-1}_{I_s}|$. With this and \emref{eq: thin layer}, we obtain
\begin{align*}
    |A^{r_s+\ell'}_{I_s}| &\geq  |A_{I_s}^{r_s+\ell'-1}| + |N_{G-W}(A^{r_s+\ell'-1}_{I_s})| - \sum_{i\in I_{s}} \left|W_i \cap N_{G-W}( B^{r_{s}+\ell'-1}_{G-(W\cup W_i)}(A_i))\right|  \\
    &\geq \left(1+\frac{\rho}{2}\right)|A_{I_s}^{r_s+\ell'-1}| - \frac{1}{40} (r_{s}+\ell') x \rho^2 |I_s| \\
    &\geq \left(1+\frac{\rho}{4}\right)|A_{I_s}^{r_s+\ell'-1}| + \frac{\rho}{4}|A_{I_s}^{r_s}|  - \frac{1}{40} (\rho r_{s} + \ell'\rho) x \rho |I_s| \\
    &\geq \left(1+ \frac{\rho}{4}\right)|A_{I_s}^{r_s+\ell'-1}|.
\end{align*}
The final inequality holds by \ref{condition_5} and the assumption $\rho\ell' \leq 10$.
This verifies \eqref{eq: expand rho4 ell'}. If $\ell> \frac{10}{\rho}$, this would imply $|A^{r_s+ \frac{10}{\rho}}_{I_s}| \geq (1+ \frac{\rho}{4})^{\frac{10}{\rho}} qx > 4qx$, contradicting the definition of $\ell$. Thus, $\ell\leq \frac{10}{\rho}$.

Let $r_{s+1} = r_s + \ell$, then \ref{condition_rell} holds for $s+1$, and by definition of $\ell$, $|A^{r_{s+1}}_{I_s}|\ge 4qx$.
From our initial assumption, we know $|A^{r_{s+1}}_j|\leq qx$ for each $j\in I_s$.
As $\sum_{j\in J}|A^{r_{s+1}}_j| \geq |A^{r_{s+1}}_J|$ holds for any index set $J\subseteq [q]$,  we can partition $I_s$ into $I'_1,\dots, I'_B$ with $qx\leq |A^{r_{s+1}}_{I'_k}| \leq 3qx$ for all $k\in [B]$.
To see this, we can add an index one by one to $I'_1$ until $|A^{r_{s+1}}_{I'_1}|\geq qx$. Note that $qx\leq |A^{r_{s+1}}_{I'_1}|\leq 2qx$. Then add remaining indices one by one to $I'_2$ until $qx\leq |A^{r_{s+1}}_{I'_2}|\leq 2qx$. Repeating this until the union of sets corresponding to the remaining indices has size less than $qx$, add them to an arbitrary part, then we can ensure $qx\leq |A^{r_{s+1}}_{I'_k}|\leq 3qx$ for all $k\in [B]$. 

As $\sum_{k\in [B]}|A^{r_{s+1}}_{I'_k}|\geq |A^{r_{s+1}}_{I_s}|,$ 
there exists $I_{s+1}\in \{I'_1,\dots, I'_B\}$ such that 
$$\frac{3}{4} |A^{r_{s+1}}_{I_s}| \geq 3qx\geq |A_{I_{s+1}}^{r_{s+1}}| \geq \frac{|I_{s+1}|}{|I_{s}|} \cdot |A^{r_{s+1}}_{I_s}|.$$
This choice of $I_{s+1}$ satisfies \ref{condition_size} and \ref{condition_6}. As $I_{s}$ satisfies \ref{condition_5}, by \eqref{eq: expand rho4 ell'}, we have 
\begin{align*}
|A_{I_{s+1}}^{r_{s+1}}| &\geq \frac{|I_{s+1}|}{|I_{s}|} \cdot \left(1+\frac{\rho}{4}\right)^{\ell} |A_{I_s}^{r_s}| \geq \left(1+\frac{\rho}{4} \right)^{\ell} \cdot  \left(\frac{1}{4}\rho r_s + 1\right) x|I_{s+1}| \\
&\geq \left(1+ \frac{\ell \rho}{4}\right) \left(\frac{1}{4}\rho r_s + 1 \right) \cdot  x|I_{s+1}| 
\geq \left( \frac{1}{4} \rho r_s+ \frac{1}{4}\rho \ell + 1\right) \cdot  x|I_{s+1}| =  \left(\frac{1}{4} \rho r_{s+1} + 1\right) x|I_{s+1}|.
\end{align*}
Thus $I_{s+1}$ satisfies \ref{condition_5}.
Therefore, we obtained a set $I_{s+1}$ satisfying \ref{condition_size}--\ref{condition_6}, a contradiction to the maximality of $s$. This proves that there must be an index $i\in [q]$ with 
$|A_i^r|\geq qx$.
\end{proof}

\subsection{Building webs and subdivisions}
In this subsection, we state and prove lemmas useful for constructing a desired subdivision. 
In the following three lemmas, there are complicated-looking conditions that several parameters have to satisfy.
This is necessary as we need to apply these lemmas to graphs with different expansion properties. For ease of verification, in actual applications, the parameters will be chosen so that the inequalities hold with large gaps.

\begin{lemma}\label{lem_units}
Let $\rho\in (0,1)$ be a real number and suppose $0<1/T \ll 1$. Let $n,p,x,k,h_1,h_2,h_3$ be integers larger than $T$. Suppose the following hold.
\begin{align}
   x &\geq  \frac{200 ph_1h_3}{\rho^2 k} + h_2, &     p&\geq 4kh_3 + 2\rho k h_1,\label{eq: conditions on lem_units1}\\
   &4\left( \frac{nh_1}{px} \right)^3 \leq k \leq \frac{n}{30x}, &    h_3 &\geq \frac{30}{\rho} \log\left(\frac{10 n}{\rho k x} \right).\label{eq: conditions on lem_units2}
\end{align}
 Let $G$ be an $n$-vertex graph with $(\frac{kx}{2}, \frac{n}{2}, \rho)$-expansion-property and $W \subseteq V(G)$ be a set of vertices of size at most $\frac{1}{100} \rho^2 kx$.
 If $G-W$ contains $2p$ vertex-disjoint stars each with $x$ leaves, then $G-W$ contains an $(h_1, h_2, h_3)$-unit.
\end{lemma}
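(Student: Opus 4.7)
Label the $2p$ vertex-disjoint stars in $G-W$ as $S_1,\dots,S_{2p}$ with centres $u_1,\dots,u_{2p}$ and leaf sets $L_1,\dots,L_{2p}$ (each $|L_i|=x$). My plan is to designate $v:=u_{2p}$ as the centre of the desired $(h_1,h_2,h_3)$-unit, permanently reserve $L_{2p}$ (never using its leaves in any path) as a launchpad adjacent to $v$, and iteratively construct internally disjoint paths $P_1,\dots,P_{h_1}$ of length at most $h_3$ in $G-W$, each connecting $v$ to the centre $u_{j_t}$ of one of $h_1$ distinct other stars. Since the total number of leaves used as internal vertices of these paths is at most $h_1h_3$, the hypothesis $x-h_2\ge 200ph_1h_3/(\rho^2k)$ together with $p\ge 2\rho k h_1$ forces $h_1h_3/(x-h_2)\le \rho^2 k/(200p)<1$, so no single star loses more than $x-h_2$ of its leaves; in particular each chosen $S_{j_t}$ retains at least $h_2$ unused leaves, which together with $u_{j_t}$ furnish the $K_{1,h_2}$ exterior stars of the unit.

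At iteration $t$, set $U_t:=W\cup\bigcup_{s<t}V(P_s)$; using $|W|\le \rho^2 k x/100$ and $h_1h_3\le \rho^2 kx/200$ gives $|U_t|\le \rho^2 kx/50\ll kx$. Call $j\in[2p-1]\setminus\{j_1,\dots,j_{t-1}\}$ \emph{fresh} if $|L_j\setminus U_t|\ge x/2$; since $\sum_j|L_j\cap U_t|\le |U_t|$, at most $\rho^2k/25$ indices fail freshness, so the fresh set $\mathcal{F}_t$ has size $\ge p$ under the parameter hypotheses. Apply Corollary~\ref{cor_single_large_set} to $\{A_j:=L_j\setminus U_t\}_{j\in\mathcal{F}_t}$ with $W_j=\varnothing$ and parameters rescaled so the output ball has size $\ge kx$; the required $q=|\mathcal{F}_t|\gtrsim k\log k/\rho$ and $r\gtrsim \log k/\rho$ follow from $p\ge 4kh_3$ and $h_3\ge (30/\rho)\log(10n/(\rho kx))$. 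This yields an index $j^*\in\mathcal{F}_t$ with $|B^r_{G-U_t}(A_{j^*})|\ge kx$ for some $r\le h_3/3$.

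For the source side near $v$, take the launchpad $Y_t:=L_{2p}\cup\bigcup_{j\in\mathcal{F}_t}L_j$; since $|Y_t|\ge (|\mathcal{F}_t|+1)\cdot x/2\ge kx/2$, it lies in the expansion range $[kx/2,n/2]$, and iterating expansion in $G-U_t$ for $r$ steps gives $|B^r_{G-U_t}(Y_t)|\ge kx$. Apply Lemma~\ref{lem: short_path_general} to the two size-$\ge kx=2\cdot(kx/2)$ sets $B^r_{G-U_t}(Y_t)$ and $B^r_{G-U_t}(A_{j^*})$ in $G-U_t$ (the forbidden set has size $|U_t|\le \rho^2 kx/50<\rho(kx)/4$), obtaining a connecting path of length at most $(10/\rho)\log(n/(kx))$. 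Back-tracing through the BFS layers on both sides extends this to a $v$-to-$u_{j^*}$ path $P_t$ of total length at most $2r+(10/\rho)\log(n/(kx))+O(1)\le h_3$ by the hypothesis on $h_3$. Set $j_t:=j^*$ and iterate $h_1$ times.

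The main technical obstacle is that the source-side back-trace may terminate at a leaf of some $L_j\subseteq Y_t$ with $j\ne 2p$ rather than at $L_{2p}$, leaving the path stranded at $u_j$ instead of reaching $v$. I address this by organising the BFS on the source side in layers starting from $L_{2p}$ alone, treating $Y_t\setminus L_{2p}$ only as a size-boosting scaffold invoked at the threshold check for the $(kx/2,n/2,\rho)$-expansion property; the back-trace then roots at $L_{2p}$ by construction. Alternatively, should the back-trace reach $u_j$ for some $j\ne 2p$, a further short sub-path $u_j$-to-$v$ is constructed by a symmetric expansion argument and fits inside the slack between $h_3$ and $(10/\rho)\log(n/(kx))$. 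The remaining verifications --- that the parameter substitutions satisfy $4(nh_1/(px))^3\le k\le n/(30x)$, and that the $h_2$-subset chosen from each $L_{j_t}$ can be selected disjoint from the incidental uses of $L_{j_t}$ by other paths --- are routine arithmetic under the stated inequalities.
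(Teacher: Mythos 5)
Your proposal has a genuine gap that cannot be repaired by the two remedies you sketch. The central problem is that you commit to $v=u_{2p}$ as the centre of the unit from the outset and then need to build $h_1$ internally disjoint paths from $v$ (launched through $L_{2p}$, of size $x$) to other stars. But the only expansion hypothesis is the $(\tfrac{kx}{2},\tfrac{n}{2},\rho)$-expansion property, which says nothing about sets of size below $kx/2$. Since $|L_{2p}|=x$, which is a factor $k/2\gg 1$ below the threshold, the balls $B^\ell_{G-U_t}(L_{2p})$ are not guaranteed to grow at all; indeed a graph with this expansion property can still contain a small, poorly expanding cluster around $v$. Your first fix --- ``treating $Y_t\setminus L_{2p}$ only as a size-boosting scaffold invoked at the threshold check'' --- is not coherent: the expansion property is a statement about the neighbourhood of the set $X$ you apply it to, so you cannot use $Y_t$ to certify expansion of the much smaller set $L_{2p}$. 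And if you actually apply the expansion to $Y_t$, the resulting path back-traces to an arbitrary vertex of $Y_t$, not to $L_{2p}$. Your second fix merely relocates the same problem: a ``symmetric expansion argument'' connecting $u_j$ to $v$ again requires expansion from the small leaf sets of two single stars, which is exactly what is unavailable.

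The paper sidesteps this entirely by \emph{not} fixing the centre in advance. It splits the $2p$ stars into two families $S_1,\dots,S_p$ and $R_1,\dots,R_p$, builds a maximal matching-like family $I$ of short paths between leaves of $S_i$'s and leaves of $R_j$'s, and argues by contradiction that some $S_i$ (or $R_j$) must accumulate $h_1$ paths. The single-star smallness problem is handled by introducing the set $Q$ of vertices reachable, after $r_1$ steps avoiding the obstacle set, from \emph{at least $h_1$ different} $S_i$'s simultaneously: Lemma~\ref{many_large_sets} certifies that $Q$ is large because the union of the $S_i$'s is large, and then any path from $Q$ to $\bigcup_j R_j$ found via Lemma~\ref{lem: short_path_general} has the freedom to be routed back to one of $h_1$ candidate $S_i$'s, at least one of which is not yet connected to the relevant $R_j$. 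This multiplicity/pigeonhole structure is what your proposal is missing, and without it the argument does not go through. You would need to restructure along these lines rather than committing to a fixed centre.
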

\begin{proof}
	Let $S_1, S_2, \cdots, S_p, R_1, R_2, \cdots, R_p$ be $2p$ vertex-disjoint stars each having exactly $x$ leaves. Let $C$ be the set of center vertices of these stars. Note that the existence of such stars implies $px \leq n$.
	Let $I\subseteq [p]\times [p]$ 
	and $\mathcal{P} = \{P_{i,j}: (i,j)\in I\}$ be a collection of paths where $I$ is a maximal set among all $(I,\mathcal{P})$ satisfying the following.
 \begin{enumerate}[label = {\bfseries I\arabic{enumi}}]
		\item\label{cond: I1} For each $(i,j)\in I$, the path $P_{i,j}$ avoids the vertices in $C$ and has length at most $h_3-2$ connecting a leaf of $S_i$ and a leaf of $R_j$. 
		\item\label{cond: I2} For two distinct tuples $(i,j),(i',j')\in I$, $P_{i,j}$ and $P_{i',j'}$ are vertex-disjoint.
	\end{enumerate}
	For $\ell\in [2]$ and $i\in [p]$, let $w_\ell(i)$ be the number of tuples in $I$ whose $\ell$-th coordinate is $i$.
	If some $\ell\in [2]$ and $i\in [p]$ satisfies $w_\ell(i)\geq h_1$, then take those paths $\{ P_{i,j}: (i,j)\in I\}$ if $\ell=1$ or $\{P_{j,i}: (j,i)\in I\}$ if $\ell=2$. 
	These paths together with the corresponding stars form a desired unit. Note that in this unit, some paths may use up leaves from some stars. However, each star still has $x - h_1h_3$ leaves disjoint from the paths. As $p\geq \rho kh_1\geq \rho k$ and $\rho<1$, we have $x \geq \frac{200ph_1h_3}{\rho^2 k} + h_2 \geq h_1h_3 + h_2.$
	Hence, $x-h_1h_3\geq h_2$ and the unit we obtained is an $(h_1,h_2,h_3)$-unit as desired.

	Now we assume that for all  $\ell\in [2]$ and $i\in [p]$, we have
	\begin{align}\label{eq: at most h1}
	    w_{\ell}(i) < h_1.
	\end{align}
	We will show that this assumption yields a contradiction to the maximality of $I$ by finding a tuple $(i,j)$ and a path $P_{i,j}$ which we can add to $I$ and $\mathcal{P}$ while keeping the condition \ref{cond: I1} and \ref{cond: I2}.
	
For each $i\in [p]$, let
$P_i = \bigcup_{j:(i,j)\in I} V(P_{i,j})$, $P=\bigcup_{i\in [p]} P_i$ and $\widetilde{W} = W\cup P \cup C.$
As each $V(P_{i, j})$ contains at most $h_3$ vertices, \eqref{eq: conditions on lem_units1} together with~\eqref{eq: at most h1} imply that 
\begin{align}\label{eq: PiPQ size}
\begin{split}
    |P_i|&\leq h_1 h_3, \enspace \enspace   |P|\leq ph_1h_3, \enspace \text{and} \enspace \\ |\widetilde{W}|&\leq \frac{1}{100}\rho^2 kx + ph_1h_3 + 2p \leq \frac{1}{100}\rho^2 kx +\frac{1}{200}\rho^2 kx + 2p \leq \frac{1}{50}\rho^2 kx.
    \end{split}
\end{align}
Let $r_1 = \frac{10}{\rho} \log\left(\frac{2nh_1}{px}\right) \enspace \text{ and } \enspace r_2 = h_3 -r_1 -2.$
Note that \eqref{eq: conditions on lem_units1} implies that we have 
\begin{align}\label{eq: nrhokx}
    \frac{n}{\rho kx} \geq \frac{n}{px/(2h_1)}= \frac{2nh_1}{px}.
\end{align}
As $px\leq n$, we get $\frac{nh_1}{px} \geq 100$. This together with \eqref{eq: conditions on lem_units2} implies that 
\begin{align}\label{eq: at most k/2}
    \frac{4 nh_1}{px}  \leq   (1+\frac{\rho}{4})^{r_1}  \leq 2\left( \frac{nh_1}{px} \right)^3 \leq k/2.
\end{align}
On the other hand, we also have
\begin{align}\label{eq: at least n/rhokx}
r_2 \geq \frac{30}{\rho} \log \left(\frac{10n}{\rho kx}\right) - \frac{10}{\rho} \log\left(\frac{2nh_1}{px}\right) -2 
\geq \frac{10}{\rho} \log \left(\frac{10 n}{\rho kx}\right).
\end{align}
The final inequality holds by \eqref{eq: nrhokx}.

For each $i\in [p]$, consider $B^{r_1}_{G-\widetilde{W}}(V(S_i)\setminus \widetilde{W})$.
If we can find a path of length $r_2$ from this set to some $R_j$ avoiding $\widetilde{W}$, then we obtain a path between $S_i$ and $R_j$. However, this does not yet yield a contradiction to the maximality of $I$ as the tuple $(i,j)$ might already be in $I$. In order to find $(i,j)$ not in $I$, we define $Q$ as follows:
$$Q = \{ v\in V(G)\setminus\widetilde{W}: v\in B^{r_1}_{G-\widetilde{W}}(V(S_i)\setminus \widetilde{W}) \text{ for at least $h_1$ choices of } i\in [p] \}.$$
We claim that
\begin{align}\label{eq: Q size}
       |Q| \geq \frac{1}{10} \rho k x.
\end{align}
Indeed, suppose that \eqref{eq: Q size} does not hold. 
We aim to apply Lemma~\ref{many_large_sets} to obtain vertices outside $Q$ belonging to $B^{r_1}_{G-\widetilde{W}}(V(S_i)\setminus \widetilde{W})$ for at least $h_1$ choices of $i\in [p]$ to derive a contradiction.

Let $W' = \widetilde{W}\cup Q$, then by \eqref{eq: PiPQ size}, we have $|W'|\leq \frac{1}{5}\rho kx$. 
Let $x'=x/2$ and we count the sets $V(S_i)\setminus W'$ having at least $x'$ vertices. As $S_1,\dots, S_p$ are vertex-disjoint, $|V(S_i)\cap W'|\geq x/2$ for at most $\frac{2|W'|}{x}\leq k$ many choices of $i\in [p]$. 
Hence, by letting $I' \subseteq [p]$ be the set of $i\in [p]$ with $|V(S_i)\setminus W'| > x/2$, we have $|I'|\geq p - k \geq 3kh_3$.

As $G$ has $(\frac{kx}{2},\frac{n}{2},\rho)$-expansion property and $\frac{kx}{2}\leq kx' \leq \frac{n}{40}$, the second and third inequalities of \eqref{eq: at most k/2} allow us to apply Lemma \ref{many_large_sets} with $|I'|, \{S_i\setminus W' : i\in I'\}, W', \varnothing$ and $r_1$ playing the roles of $q, \{A_i:i\in [q]\}, W, W_i$ and $r$, respectively. So there exists $I'' \subseteq [p]$ with $|I''| \geq p-2k-2kr_1$ such that for all $i \in I''$, we have $|B_{G-W'}^{r_1}(V(S_i) \setminus W')| \geq x' \Big(1+ \frac{\rho}{4}\Big)^{r_1}.$
   However, by the first inequality of \eqref{eq: at most k/2}, this means 
	\begin{align*}
	   \sum_{i \in [p]}|B_{G-W'}^{r_1}(V(S_i) \setminus W')| & \geq |I''| \cdot x' \Big(1+ \frac{\rho}{4}\Big)^{r_1} \geq nh_1.
	\end{align*}
The final inequality holds as \eqref{eq: conditions on lem_units1},  \eqref{eq: conditions on lem_units2}  and \eqref{eq: nrhokx} imply 
$|I''|\geq p-2k-2kr_1 \geq p - 2k -kh_3 \geq p/2$.
   Therefore, there exists a vertex $v$ out of $W'$, hence also $Q$, belonging to $B_{G-W'}^{r_1}(V(S_i) \setminus W')$ for at least $h_1$ choices of $i\in [p]$, contradicting the definition of $Q$. This proves \eqref{eq: Q size}.

By \eqref{eq: PiPQ size} and \eqref{eq: Q size}, we know that 
$$ |\widetilde{W}| \leq \frac{1}{50}\rho^2 kx <
\frac{\rho}{4} \cdot \frac{1}{10}\rho kx  \leq \frac{\rho}{4} \min\{ |Q|,\big|\bigcup_{j\in [p]} R_j\setminus C\big|\}.$$
Here, the final inequality holds as $\rho kx< px \leq |\bigcup_{j\in [p]} R_j\setminus C|$ by \eqref{eq: conditions on lem_units1}.
Hence, by Lemma~\ref{lem: short_path_general} and \eqref{eq: at least n/rhokx}, there exists a path $\widetilde{P}$ connecting $Q$ and $\bigcup_{j\in [p]} R_j\setminus C$ while avoiding all vertices in $\widetilde{W}$ which has length at most $10\rho^{-1}\log(\frac{n}{\rho kx/10}) \leq r_2$.

	Let $v$ be an endpoint of $\widetilde{P}$ in $Q$ and $R_j$ be the star that contains another endpoint of $P$.
	By the definition of $Q$, there are $h_1$ indices $i \in [p]$ such that there exists a path connecting $v$ and $S_i$ of length at most $r_2$ which avoids $\widetilde{W}$.
	As we assume that $w_2(j) < h_1$, there are less than $h_1$ choices of $i$ with $(i,j)\in I$. Hence we can choose $i$ such that $(i,j)\notin I$ and there exists a path connecting $v$ and $S_i$ of length at most $r_2$ avoiding $\widetilde{W}$.
	By combining $\widetilde{P}$ and this path, we obtain a walk connecting $S_i$ and $R_j$ while avoiding $\widetilde{W}$.
	This contains a path $P_{i,j}$ connecting $S_i$ and $R_j$ of length at most $h_3-2$ avoiding $\widetilde{W}$. Then $(I\cup \{(i,j)\},\mathcal{P}\cup \{P_{i,j}\})$ satisfies \ref{cond: I1} and \ref{cond: I2}, contradicting the maximality of $I$.
   \end{proof}
   
   We can obtain many units using the previous lemma. The following lemma provides a way to assemble these units into a web. Note that we have two different parameters $\rho_1$ and $\rho_2$ here. When we use this lemma in the proof of Lemma~\ref{lem_nc_expander}, the  $(px,4px,\rho_1)$-expansion will be obtained from the definition of crux and $(px,\frac{n}{2},\rho_2)$-expansion will be obtained from the definition of the sublinear expander.
   \begin{lemma}\label{lem_webs}
   Suppose $0<1/T, c\ll 1$ and let $p, q, x, h_1, h_2, h_3, h_4, h_5$ be integers larger than $T$ and $\rho_1, \rho_2 \in (0, 1]$. 
Suppose that the following hold.
   \begin{align*}
h_1 \geq 4 h_5,  \enspace h_4 \leq \min\Big\{  cx \rho_1^2, \frac{1}{8}q\Big\}, \enspace
h_5 \geq h_3 + 50\rho_1^{-1}\log q + 10 \rho_2^{-1}\log\left(\frac{2n}{px}\right) + 10\rho_2^{-1}\log\left(\frac{4n}{qh_1h_2}\right).
\end{align*}
   	Let $G$ be an $n$-vertex graph and $W \subseteq V(G)$ is a set of vertices satisfying 
\begin{align} |W| \leq \frac{1}{32}\rho_1 p x - p \enspace \text{and} \enspace |W|+p+2h_4h_5 \leq \rho_2 \cdot\min\Big\{\frac{1}{16}qh_1h_2, \frac{1}{8}px\Big\}.\label{eq: lem_webs condition 2}
\end{align}
Suppose $G$ has $(px, 4px, \rho_1)$-expansion-property and $(px, \frac{n}{2}, \rho_2)$-expansion-property.
	If $G-W$ contains pairwise vertex-disjoint subgraphs $S_1, \dots, S_p, R_1, \dots, R_q$ where each $S_i$ is a star with $x$ leaves and each $R_j$ is an $(h_1, h_2, h_3)$-unit for all $i \in [p]$ and $j \in [q]$. Then $G-W$ contains a $(h_4, h_5, h_1/2, h_2/2, h_3)$-web which contains a center vertex of $S_1, \dots, S_p$ only at its center and whose exterior is contained in $\bigcup_{j \in [q]} V(R_j)$.
   \end{lemma}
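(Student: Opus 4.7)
The plan is to follow the maximality template of Lemma~\ref{lem_units}, modified for webs and for coordinating the two expansion regimes. We consider a maximal collection $I \subseteq [p]\times[q]$ together with paths $\mathcal{P} = \{P_{i,j} : (i,j)\in I\}$ such that: (a) each $P_{i,j}$ has length at most $h_5 - h_3 - 2$ and connects a leaf of $S_i$ to a leaf of an exterior star of $R_j$; (b) each $P_{i,j}$ avoids $W$, the centre of every $S_{i'}$ with $i' \neq i$, and the interior vertices of every unit $R_{j'}$; and (c) for each fixed $i$, the paths $\{P_{i,j}\}_{j \in I_i}$ (with $I_i := \{j : (i,j) \in I\}$) are internally disjoint.

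If some index $i^*$ satisfies $|I_{i^*}| \geq h_4$, we extract the web as follows: take any $h_4$ of the paths $\{P_{i^*,j}\}$, prepend to each the edge from its $S_{i^*}$-endpoint to the centre of $S_{i^*}$, and append the path through the branch of $R_j$ containing the $R_j$-endpoint leaf down to the centre of $R_j$. The resulting $h_4$ paths have length at most $h_5$; by (b) they remain internally disjoint after the extensions (each extension lies inside the interior of a distinct $R_j$, which no other path visits); and each targeted $R_j$ loses just the one branch used by its own extension, retaining $h_1 - 1 \geq h_1/2$ branches, each with $\geq h_2/2$ intact leaves. This is precisely the desired $(h_4, h_5, h_1/2, h_2/2, h_3)$-web centred at the centre of $S_{i^*}$, with exterior contained in $\bigcup_j V(R_j)$.

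Otherwise $|I_i| < h_4$ for every $i$, and we contradict maximality by producing a new admissible path. For each $i$, let $A_i$ be the set of unused leaves of $S_i$, so $|A_i| \geq x - h_4 \geq x/2$. Put $W^* = W \cup C \cup \bigcup_{j'}\mathrm{int}(R_{j'})$, where $C$ is the set of centres of the stars $S_{i'}$ with $i' \neq i$, and for each $i$ let $W_i = \bigcup_{j\in I_i}V(P_{i,j})$, so $|W_i|\leq h_4 h_5$. We first invoke the crux-like $(px, 4px, \rho_1)$-expansion via Lemma~\ref{single_large_set} to obtain some $i^*$ for which $|B^{r_1}_{G - (W^* \cup W_{i^*})}(A_{i^*})| \geq px$ with $r_1 = 50\rho_1^{-1}\log q$; the interiors of the $q$ pairwise-disjoint units are fed into the thin-layer hypothesis of Lemma~\ref{single_large_set} rather than its common-set $W$, so that we avoid a global bound on the total interior. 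We then apply the sublinear $(px, n/2, \rho_2)$-expansion via Lemma~\ref{lem: short_path_general} to connect this ball to $\bigcup_{j : (i^*,j)\notin I}E_j$, which has size at least $(q - h_4)h_1h_2 \geq qh_1h_2/2$, through a path of length at most $10\rho_2^{-1}\log(2n/(px)) + 10\rho_2^{-1}\log(4n/(qh_1h_2))$. The concatenation yields a new admissible path $P_{i^*,j^*}$ with $(i^*, j^*)\notin I$, contradicting maximality of $I$.

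The main obstacle will be verifying the hypotheses of the two expansion steps simultaneously. The bound $|W| \leq \rho_1 px/32 - p$ controls the small-set $\rho_1$-expansion regime, while $|W| + p + 2h_4 h_5 \leq \rho_2 \min(qh_1h_2/16, px/8)$ controls the sublinear $\rho_2$-regime. The contribution $\sum_{j'}|\mathrm{int}(R_{j'})|\leq q(1+h_1h_3)$ is not bounded by either hypothesis directly, which is why we must route it into the thin-layer clause of Lemma~\ref{single_large_set} using the vertex-disjointness of the units. Once this accounting is in place, the length budget $h_5 \geq h_3 + 50\rho_1^{-1}\log q + 10\rho_2^{-1}\log(2n/(px)) + 10\rho_2^{-1}\log(4n/(qh_1h_2))$ matches the cost of the two-step expansion plus the $h_3$-length branch traversal, and the extraction step goes through.
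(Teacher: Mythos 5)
Your proposal deviates from the paper's proof in a way that breaks the accounting in two key places, and the patch you suggest (feeding unit interiors into the thin-layer clause) is not sound.

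\textbf{The avoidance set is too large.} You require every admissible path $P_{i,j}$ to avoid the interior of \emph{every} unit $R_{j'}$, and correspondingly set $W^* = W\cup C\cup\bigcup_{j'}\mathfrak{int}(R_{j'})$. The total interior size $\bigl|\bigcup_{j'}\mathfrak{int}(R_{j'})\bigr|$ can be of order $q\,h_1h_3$, and nothing in the hypotheses of the lemma bounds $q\,h_1 h_3$ against either $\rho_1 px$ or $\rho_2\min\{qh_1h_2,px\}$. In particular the common set $W$ in Lemma~\ref{single_large_set} must satisfy $|W|\leq\frac12\rho_1 px$, and the set to avoid in Lemma~\ref{lem: short_path_general} must be at most $\frac{\rho_2}{4}\min\{|R|, px\}$; both break once $\bigcup_{j'}\mathfrak{int}(R_{j'})$ is included. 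The paper sidesteps this by \emph{not} avoiding unit interiors at all: the admissible paths go from $c_i$ to the unit \emph{centers} $c'_k$, avoiding only $W\cup C$ and the earlier paths $\bigcup_{j'<j}V(P_{i,j'})$, and the contamination of unit interiors by paths is handled later at extraction time.

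\textbf{The thin-layer hypothesis is not established.} You propose to ``route'' $\bigcup_{j'}\mathfrak{int}(R_{j'})$ into the thin-layer clause of Lemma~\ref{single_large_set}, invoking vertex-disjointness of the units. But condition~\emref{eq: thin layer} is a distance-layer constraint $\bigl|W_i\cap N_{G-W}(A_i^{\ell-1})\bigr|\leq\frac{1}{40}\ell x\rho^2$ --- it is not a cardinality bound, and pairwise disjointness of the units gives no control on how many interior vertices sit in the $\ell$-th neighbourhood layer of $A_i$; they could all pile up in one layer. Moreover, even setting the interiors aside, you still need the thin-layer bound for $W_i=\bigcup_{j\in I_i}V(P_{i,j})$, whose total size ($\leq h_4h_5$) is far larger than the simple bound $\rho_1 x/4$ used in Corollary~\ref{cor_single_large_set}. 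The paper obtains the thin-layer bound precisely because each $P_{i,j}$ is chosen (\ref{cond_P2}) to be a \emph{shortest} path from $c_i$ to the remaining centers; this forces the $\ell$-th layer of the ball around $S'_i\setminus P_i$ to meet $P_i$ only in the first $\ell+2$ vertices of each path, giving $(\ell+2)\cdot 2h_4\leq\frac{1}{40}\ell x'\rho_1^2$. Your admissibility conditions are too weak to derive anything of this sort.

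\textbf{The extraction step needs a buffer.} Because in the paper's setup paths may intersect unit interiors, the paper collects $2h_4$ paths and argues (using $h_1\geq 4h_5$) that at most $h_4$ of the targeted units receive $\geq h_1/2$ path vertices; it keeps the other $h_4$ and trims their contaminated branches. You only collect $h_4$ paths on the grounds that no interior is ever visited, but that assumption is exactly what could not be paid for. If you abandon the blanket avoidance, the $h_4$ paths alone leave no room for the pruning argument. You would need to collect $2h_4$ as in the paper.

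In short, the missing ideas are (i) connecting to unit \emph{centers} rather than leaves and not forbidding passage through unit interiors, (ii) using the shortest-path choice to derive the thin-layer hypothesis of Lemma~\ref{single_large_set}, and (iii) oversampling $2h_4$ paths and discarding at most $h_4$ contaminated targets at extraction.
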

   \begin{proof}
	For each $i\in [p]$ and $j\in [q]$, let $c_i$ be the center vertex of $S_i$, $c'_j$ be the center vertex of $R_j$, and let $S'_i = V(S_i) \setminus\{c_i\}$. Let $C$ and $C'$ be the set of center vertices of stars and units, respectively, and let $W'=W \cup C$.
	For each $i \in [p]$, let $\mathcal{P}_i=\{P_{i, 1}, \cdots, P_{i, j_i}\}$ be a maximal collection of paths satisfying the following for each $i\in [p]$ and $j\in [j_i]$.
	\begin{enumerate}[label = {\bfseries P\arabic{enumi}}]
		\item $P_{i, j}$ is a path of length at most $h_5$ connecting $c_i$ and $c'_k$ for some $k=k(i, j) \in [q]$ where $k(i,j)\neq k(i,j')$ for all $j'<j$. \label{cond_P1}
		\item $P_{i, j}$ is a shortest path between $c_i$ and $C' \setminus \{c'_{k(i, j')} \mid j'<j\}$ in the graph $G -(W'\setminus\{c_i\})- \bigcup_{j'<j} V(P_{i, j'})$.\label{cond_P2}
	\end{enumerate}
	For each path $P_{i,j}$, we call $c_i$ the first vertex and $c'_{k(i,j)}$ the last vertex.
	If $j_i \geq 2h_4$ for some $i\in [p]$, then the paths $P_{i,1},\dots, P_{i,2h_4}$ together with the units $R_{k(i,1)},\dots, R_{k(i,2h_4)}$ yield a desired web.
	Indeed, count the units among $R_{k(i,1)},\dots, R_{k(i,2h_4)}$ which contain at least $h_1/2$ vertices of $U:=\bigcup_{j \in [2h_4]} V(P_{i,j})$. As $|U| \leq 2 h_4 h_5,$
	we get that at most $\frac{4h_4h_5}{h_1} \leq h_4$ units contain at least $h_1/2$ vertices of $U$. Hence, there are at least $h_4$ units each containing at most $h_1/2$ vertices of $U$. We delete all branches intersecting $U$. As each unit has $h_1$ branches, each of the remaining units contains an $(h_1/2,h_2, h_3)$-unit.
	These units together with the corresponding paths yield a desired $(h_4, h_5, h_1/2, h_2, h_3)$-web.

	Assume that $j_i <2h_4$ for all $i$ and let $P_i$ be the set of vertices in paths in $\mathcal{P}_i$ except $c_i$. Note that we have $|P_i|\leq 2h_4h_5$.
    We aim to apply Lemma~\ref{single_large_set}. Let $x'= x/2$. 
    \ref{cond_P2} implies that $P_{i,j}$ intersect with $V(S_i)$ at only the center $c_i$ and at most one leaf; and as $|W|\leq \frac{1}{32}\rho_1 px - p$, we get
    $$|S'_i \setminus P_i|\ge x-2h_4 \geq x' \enspace \text{ and } \enspace   |W' | = |W|+|C| \leq \frac{1}{16}\rho_1 p x' .$$
    Moreover \ref{cond_P2} ensures that for each $\ell \geq 1$, every vertex in
    $ N_{G-W'}\left(B_{G-(W' \cup P_i)}^{\ell-1}(S'_i \setminus P_i)\right) \cap P_i$ is one of the first $\ell+2$ vertices of paths in $\mathcal{P}_i$. Indeed, if that's not the case, then there exists a path $Q$ of length $\ell$ from $S'_i \setminus P_i$ to a vertex $v$ in some $P_{i,j}$, where $v$ is not one of the first $\ell+2$ vertices. Then $Q$ together with the vertices in $P_{i,j}$ after $v$ contains a path shorter than $P_{i,j}$ avoiding all vertices in $W$ and the vertices in $\bigcup_{j'<j} V(P_{i,j'}).$ This is a contradiction to \ref{cond_P2}.
    Hence, as $h_4 \leq  cx\rho_1^2$, we have
        \begin{align*}
 \big|N_{G-W'}\left(B_{G-(W' \cup P_i)}^{\ell-1}(S'_i \setminus P_i)\right)\cap P_i\big| \leq (\ell+2) 2h_4 \leq \frac{1}{40} \ell x' \rho_1^2.
    \end{align*}
    As $G$ has $(px,4px,\rho_1)$-expansion property, 
we can apply Lemma~\ref{single_large_set} with the sets $W', P_1,\dots, P_p,$ $S_1\setminus P_1,\dots, S_p\setminus P_p$ playing the roles of $W, W_1,\dots, W_q,$ $A_1,\dots, A_q$, respectively. 
Then we obtain an $i\in [p]$ such that  for $r = \frac{50 \log q}{\rho_1}$, $|B_{G-W'-P_i}^{r}(V(S_i) \setminus P_i)| \geq px.$
      
   For this choice of $i$, let $J=\{ k(i,j'): j'\leq j_{i}\}$ and $J' = \{k \in [q]: |V(R_{k}) \cap P_i| >h_1/2 \}$. As $|J|\leq 2h_4$ and $|J'| \leq \frac{|P_i|}{h_1/2}\leq \frac{4h_4h_5}{h_1} \leq h_4$, we have 
   $|[q]\setminus (J \cup J')| \geq q/2$.
   Let $R'_j$ be the subgraph of $R_j$ obtained by deleting branches of $R_j$ that intersect with $P_i$ and let $R =\bigcup_{j\in [q]\setminus (J\cup J')} V(R'_j)$.
   For each $j\in [q]\setminus(J\cup J')$, $R_j$ loses at most $h_1/2$ branches, so it still has at least $\frac{1}{2} h_1h_2$ vertices. Thus, $|R| \geq \frac{1}{4} q h_1h_2.$
   
As $|W'| + |P_i| \leq |W|+p+2h_4h_5 \stackrel{\eqref{eq: lem_webs condition 2}}{\leq} \frac{1}{4}\rho_2 \min\{ |R|, |B_{G-W'-P_i}^r(V(S_i) \setminus P_i) \},$ 
Lemma~\ref{lem: short_path_general} implies that there exists a path $P^*$ between $R$ and $B_{G-W'-P_i}^{r}(V(S_i) \setminus P_i)$ with 
\begin{align*}
|P^*| & \leq 10\rho_2^{-1}\min \left\{\log\left(\frac{2n}{px}\right), \log\left(\frac{4n}{qh_1h_2}\right)\right\} \\
& \leq \rho_2^{-1}\left( 10 \log\left(\frac{2n}{px}\right) + 10 \log\left(\frac{4n}{qh_1h_2}\right) \right) -2\leq h_5 - r-h_3-2.
\end{align*}
The second inequality comes from the fact that the existence of $R_1, \dots, R_q$ implies $n \geq qh_1h_2$.
Let $k \in [q] \setminus (J \cup J')$ be the index that the end point of $P^*$ lies in.
Concatenating $P^*$ with a path from $S_i$ within $B_{G-W'-P_i}^{r}(V(S_i) \setminus P_i)$ of length at most $r$ and a path from $c'_k$ within $R_k$ of length at most $h_3+1$, we obtain a path from $S_i$ to $c'_k$ for some $k\in [q]\setminus J$ of length at most $h_5-1$ disjoint from $W' \cup P_i$. 
This shows that there exists a path from $c_i$ to $C' \setminus \{c'_{k(i, j')} \mid j'<j\}$ avoiding $(W' \setminus\{c_i\})\cup P_i$, of length at most $h_5$ as $c_k' \in C' \setminus \{c'_{k(i, j')} \mid j'<j\}$. Then choose a shortest path among all such paths, which satisfies \ref{cond_P1} and \ref{cond_P2}.
Adding this path to $\mathcal{P}_i$ contradicts the maximality of $\mathcal{P}_i$. 
This proves that there must be an $i\in [p]$ with $j_i\geq 2h_4$, providing an $(h_4, h_5, h_1/2, h_2/2, h_3)$-web.
Moreover, as $C \subseteq W'$, the obtained web does not contain any vertex of $C$ except at its center. Also, the exterior of this web consists of exterior vertices of $R_1, \dots, R_q$ as desired.
   \end{proof}
   
   Finally, the following lemma provides a way to combine webs to obtain a clique subdivision.
	\begin{lemma}\label{lem_conn_webs}
	Suppose $0<1/T \ll 1$.
	 Let $n, t, h_1, h_2, h_3, h_4, h_5$ be integers larger than $T$ and $\rho \in (0, 1]$ be a real and $s\leq \min\{\frac{t}{2}, \frac{h_4}{30}\}$. Suppose the following holds.
	 \begin{align*}
	     h_5 \geq h_3 + \rho^{-1} \log\left(\frac{8n}{h_1h_2h_4}\right), \enspace 
	     \rho h_1h_2h_4 \geq 50( h_4h_5 t + 20h_5 st), \enspace \text{ and }\enspace 
	     h_1h_4 \geq 400 h_5 s.
	 \end{align*}
	 If an $n$-vertex graph $G$ has the $(\frac{h_1h_2h_4}{8}, \frac{n}{2}, \rho)$-expansion-property and has $t$ internally disjoint $(h_4, h_5, h_1, h_2, h_3)$-webs.
	 Then $G$ contains a $K_s$-subdivision.
	\end{lemma}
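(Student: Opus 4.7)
The plan is to fix an arbitrary subset of $s$ of the $t$ given webs, say $W_1, \ldots, W_s$ with centres $v_1, \ldots, v_s$, and use $v_1, \ldots, v_s$ as the core vertices of the desired $K_s$-subdivision. It remains to construct internally disjoint paths $P_{ij}$ between $v_i$ and $v_j$ for every pair $\{i,j\} \subseteq [s]$. Each $P_{ij}$ will follow a fixed template: a core sub-path of $W_i$ from $v_i$ to the centre $c_U$ of some unit $U \subseteq W_i$, then a branch of $U$ from $c_U$ to an exterior leaf, then a \emph{connecting path} across $G$ to an exterior leaf of a unit $V \subseteq W_j$, then a branch of $V$ back to $c_V$, and finally a core sub-path of $W_j$ back to $v_j$. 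The $W_i$- and $W_j$-pieces are prescribed once $U$, $V$ and their branches are chosen; only the connecting path requires nontrivial work.

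I would process the pairs greedily in an arbitrary fixed order. For the current pair $\{i,j\}$, take the endpoint sets $X_i, X_j$ to consist of the exterior leaves of units of $W_i$, $W_j$ that are not assigned to any previously processed pair and are disjoint from every previously built connecting path. Take the avoid set $W_{ij}$ to comprise: the cores of $W_1, \ldots, W_s$; the unit centres of all units in $W_1, \ldots, W_s$; the interiors of the two branches selected for each previously built subdivision path; and the vertex sets of all previously built connecting paths. Apply Lemma~\ref{lem: short_path_general} with $a = h_1 h_2 h_4 / 8$. The hypothesis $\rho h_1 h_2 h_4 \geq 50(h_4 h_5 t + 20 h_5 s t)$ ensures both $|X_i|, |X_j| \geq h_1 h_2 h_4 / 4 = 2a$ (since at most $s-1 \leq h_4/30$ units per web are assigned and the total prior path-vertices number $O(s^2 h_5)$) and $|W_{ij}| \leq \rho h_1 h_2 h_4 / 16$, which is verified term by term against each of the four contributions. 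The hypothesis $h_5 \geq h_3 + \rho^{-1} \log(8n/(h_1 h_2 h_4))$ caps the resulting connecting path at length roughly $h_5 - h_3$, leaving room for the branches of length $\leq h_3$ at either end. The endpoints of the connecting path fall in uniquely determined units $U \subseteq W_i$ and $V \subseteq W_j$, which we then assign to $\{i,j\}$.

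To close up the $v_i$-$v_j$ path, I must finally find a branch of $U$ (and one of $V$) whose interior is disjoint from every previously built connecting path. Since each prior path of length $O(h_5)$ contains $O(h_5)$ vertices and each such vertex lies in at most one branch of $U$, the $\binom{s}{2}$ prior paths collectively touch $O(s^2 h_5)$ branches of $U$. Here is where the third hypothesis $h_1 h_4 \geq 400 h_5 s$ enters: averaging over units in $W_i$ shows that some unassigned $U$ has at most $O(s^2 h_5 / h_4)$ touched branches, so the bound $h_1 h_4 \geq 400 h_5 s$ together with $s \leq h_4 / 30$ forces an intact branch of $U$ to remain. Iterating across all $\binom{s}{2}$ pairs yields the desired $K_s$-subdivision. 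The main obstacle is precisely the numerical bookkeeping: each of the three hypotheses is used at a specific stage---controlling respectively the endpoint/avoid-set sizes, the connecting-path length, and the availability of an intact branch---and the proof reduces to a careful term-by-term verification that the bounds mesh together through every iteration.
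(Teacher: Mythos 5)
Your high-level template for the subdivision paths (core sub-path, branch, connecting path, branch, core sub-path) matches the paper's, but your decision to commit upfront to a fixed set of $s$ webs creates a quantitative gap that the stated hypotheses cannot close.

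The issue is in the final step, where you need an intact branch in a web $W_i$ that you are forced to keep. The previously built connecting paths place up to roughly $\binom{s}{2}\cdot 20h_5 \approx 10 s^2 h_5$ vertices of $P$ somewhere in $G$; in the worst case, essentially all of them can land inside the interiors of the units of a single web $W_i$, since your avoid set does not exclude unit interiors (and cannot, because they are too large). Each such vertex can render one branch of $W_i$ unusable, so you may lose on the order of $s^2 h_5$ of the $h_1 h_4$ branches of $W_i$. For your $X_i$ to remain large enough (at least $h_1h_2h_4/4$), or for your averaging over units to produce one with an intact branch, you need $s^2 h_5 \lesssim h_1 h_4$. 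But the hypothesis only gives $h_1 h_4 \geq 400 h_5 s$; combining with $s \leq h_4/30$ does not help, and the inequality $s^2 h_5 \lesssim h_1 h_4$ then fails whenever $s$ is large. The same obstruction appears in your averaging sentence: wanting $O(s^2 h_5 / h_4) < h_1$ again amounts to $s^2 h_5 \lesssim h_1 h_4$.

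The paper avoids this by first reducing to $t = 2s$ and then running a greedy process that tolerates losing webs: a branch is deleted when it absorbs too many $P$-vertices, a unit when it loses too many branches, a web when it loses too many units. A web is deleted only after its interior has absorbed $\gtrsim h_1 h_4$ vertices of $P$, so the number of deleted webs is at most $|P| / (h_1 h_4/5) \lesssim s^2 h_5 / (h_1 h_4)$, and the hypothesis $h_1 h_4 \geq 400 h_5 s$ is exactly what makes this at most $s$. The saving factor of $s$ that your argument is missing comes precisely from this willingness to discard up to $s$ of the $2s$ webs rather than insisting that each of a preselected $s$ survive. To repair your proof you would either need the stronger hypothesis $h_1 h_4 \gtrsim s^2 h_5$, or you must incorporate a deletion/fallback mechanism for webs as the paper does.

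A secondary point: your definition of $X_i$ (leaves ``disjoint from every previously built connecting path'') is not strong enough. A leaf can be untouched while the internal path of its branch, or the core path of its unit, has been used by a prior connecting or extension path; you need $X_i$ to consist of leaves whose entire branch interior and whose unit's core path are $P$-free and unassigned. This is fixable, but tightening it makes the counting above even more demanding.
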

	\begin{proof}
		By decreasing $t$, we may assume that $t=2s$ and $t\leq h_4/15$.
		Let $U_1, U_2, \cdots, U_t$ be an ordering of internally disjoint $(h_4,h_5,h_1,h_2,h_3)$-webs.
		We will inductively connect two webs by a path satisfying the following.
		\begin{enumerate}
			\item Among all not-deleted webs, choose two webs $U_i$ and $U_j$ from first $s$ webs that are not connected by a path. Find a path of length at most $10\rho^{-1} \log(\frac{8n}{h_1h_2h_4})$ between them which is disjoint from all cores of webs and previously built paths.
			\item Extend the path obtained in the previous step using edges of $U_i$ and $U_j$ to get a path of length at most $10\rho^{-1} \log(\frac{8n}{h_1h_2h_4}) + 2h_3+2h_5$ connecting the centers of two webs.
			\item For each branch of units within a not-deleted web, if more than half of the leaves of a star belong to previously-built paths or a vertex in the path connecting the center of the unit and the center of the star belongs to a previously-built path, we delete the branch.
			\item For each unit within a not-deleted web, if more than half of the branches are deleted, we delete the unit.
			\item For each web, if more than half of the units within the web are deleted, then we delete the web.
	   \end{enumerate}
	   Note that the above process connected two webs within the first $s$ webs. Hence, each web is connected to at most $s$ webs before it. So the above process ends within $st$ rounds. 
	   
	   If a web is not deleted, then it has at least $h_4/2$ units each of which has at least $h_1/2$ branches where each branch has at least $h_2/2$ leaves at the end. Hence, its exterior has size at least $\frac{1}{8}h_1h_2h_4$.
       The total size of cores of webs is at most 
		   $t(1+h_4h_5)$.
	  As less than $st$ paths were built during the whole process, the total number of vertices belonging to one of the previously-built paths is at most $st\left(10\rho^{-1} \log\left(\frac{8n}{h_1h_2h_4}\right) + 2h_3 + 2h_5\right) \leq 20st h_5$.
	   Hence, the number of vertices within a core or previously-built path is at most $20st h_5 + t(1+h_4h_5) \leq \frac{1}{4}\rho \cdot \frac{1}{8}h_1h_2h_4$.
	   Therefore, we can connect the exteriors of two webs by a path of appropriate lengths using Lemma~\ref{lem: short_path_general} and the $(\frac{h_1h_2h_4}{8}, \frac{n}{2}, \rho)$-expansion-property while avoiding core of webs and all previously used path.
	   This shows that the process only ends when either all first not-deleted $s$ webs are connected pairwise, providing a $K_s$-subdivision, or ends after deleting more than $t-s$ webs.
	  
	  Let $P$ be the set of vertices in the paths we obtain from the process.
	  As the exterior of a web contains at most $20st h_5$ vertices of $P$, this can delete at most $\frac{40st h_5}{h_1}$ stars within a web, causing at most 
	  $\frac{80st h_5}{h_1h_4}$ units to be deleted.
	  
	  A web is deleted if $h_4/2$ units are deleted. As the paths do not intersect the cores of webs,  a web is deleted only when at least $\frac{h_4}{2} - \frac{80st h_5}{h_1h_4}-s$ units within the web contain at least $h_1/2$ vertices of $P$ in its interior.
	  This implies that each deleted web contains at least $\frac{h_1h_4}{4} - \frac{40st h_5}{h_4}- \frac{sh_1}{2} \geq \frac{h_1h_4}{5}$
	  vertices of $P$ within its interior. Here, we used the assumption that $2s=t\leq h_4/15$ to obtain the final inequality.
	  
	   As all webs are internally disjoint, this implies that at most $\frac{|P|}{h_1h_4/5} \leq \frac{100 st h_5}{h_1h_4} \leq \frac{t}{2}$ webs are deleted.
	   The final inequality holds as $h_1h_4 \geq 400 h_5s$. Hence, we never delete more than $t-s \geq t/2$ webs, concluding that the above process ends with $s$ pairwise connected webs, providing a $K_s$-subdivision.
	\end{proof}

\section{Top layer $(\eps,\eps d)$-robust-expander}\label{sec_ed_expander}
In this section, we prove Lemma~\ref{lem_dense}. Suppose $0< \frac{1}{n} \ll \eta \ll \frac{1}{T} \ll \eps$ and let $K = \frac{Tn}{d}$. Let $G$ be an $n$-vertex $(\eps, \eps d)$-robust-expander with $\delta(G) \geq d/4$ and $d(G) \geq d/2$.
We aim to find a $K_t$-subdivision in $G$ with $t=\min\left\{\frac{d}{(\log d)^{60}}, \frac{\sqrt{n}}{(\log \left(\frac{Tn}{d}\right))^{11}}\right\}.$

We will construct $t$ internally disjoint units, and connect them by paths to obtain a desired subdivision. 
In order to build many internally disjoint units, we need to find many vertex-disjoint copies of $K_{1,h_2}$ after setting aside the interior vertices of previously built units. The
following lemma helps us to find stars in $G$ even after deleting a small set of arbitrary vertices.

\begin{claim}\label{disj_stars}
	Let $1 \leq x \leq   10^3 d/\log^3 K$ be an integer and $q$ be an integer that satisfies $qx \leq t^2 \log^{18} K$. 
	Then for any vertex set $W\subseteq V(G)$ with $|W|\leq t^2 \log^{18} K$,
	the graph $G-W$ contains at least $q$ vertex-disjoint copies of $K_{1, x}$.
	\end{claim}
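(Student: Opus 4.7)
The plan is to build the $q$ stars one at a time, greedily: at each step, we delete the vertices used so far together with $W$ and invoke Lemma~\ref{lem_avg_deg} to show that the remaining graph still has average degree well in excess of $2x$, hence contains a vertex of degree at least $x$ whose neighbourhood hosts the next $K_{1,x}$. The whole argument is essentially bookkeeping once we observe that the hypotheses $x \leq 10^3 d/\log^3 K$ and $|W|, qx \leq t^2 \log^{18} K$, combined with $t \leq \sqrt{n}/\log^{11} K$, leave plenty of polylogarithmic slack to fit the quantitative requirements of Lemma~\ref{lem_avg_deg}.

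In detail, suppose we have already constructed $i-1$ vertex-disjoint copies of $K_{1,x}$ on a vertex set $U_{i-1}$ with $|U_{i-1}| = (i-1)(x+1) \leq q(x+1) \leq 2qx + 1$. Set $W_i = W \cup U_{i-1}$, so
$$
|W_i| \leq |W| + 2qx + 1 \leq 3 t^2 \log^{18} K \leq \frac{3n}{\log^{4} K},
$$
using $t^2 \leq n/\log^{22} K$. Write $\rho := \rho(n,\eps,\eps d) = \eps/\log^{2}(15K/(T\eps))$. From the hierarchy $1/n \ll 1/T \ll \eps$ one checks the two inequalities
$$
\frac{3n}{\log^{4} K} \leq \tfrac{1}{20}\rho\, n
\qquad\text{and}\qquad
\tfrac{1}{20}\rho\, d \geq 2x,
$$
the second using $x \leq 10^3 d/\log^{3} K$. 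Applying Lemma~\ref{lem_avg_deg} to $W_i$ then yields $d(G - W_i) \geq \tfrac{1}{20}\rho\, d \geq 2x$, so there exists a vertex $v \in V(G) \setminus W_i$ with at least $2x > x$ neighbours in $G - W_i$. Taking $v$ together with any $x$ of these neighbours produces the $i$-th star, vertex-disjoint from all previous stars and from $W$. Iterating this for $i = 1, \dots, q$ yields the required family.

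The only step that could cause a reader to pause is the verification that $\log(15K/(T\eps))$ is comparable to $\log K$ (so that both displayed inequalities above hold with room to spare); this follows because $K = Tn/d \geq T$ is large, and, in the regime where $K$ does not tend to infinity with $n$, the ratio $\log(15K/(T\eps))/\log K$ is bounded in terms of $\eps, T$ and the slack $\log K \geq \log T$ absorbs the constants by the choice $1/T \ll \eps$. There is no genuine obstacle; the claim is really a direct application of the minimum-degree version of Lemma~\ref{lem_avg_deg} combined with the quantitative gaps built into the hypotheses.
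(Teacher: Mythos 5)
Your proof is correct and uses essentially the same argument as the paper: a single invocation of Lemma~\ref{lem_avg_deg} to show that after deleting $W$ together with the vertices of all stars found so far, the remaining graph still has average degree exceeding $x$, hence a vertex of degree at least $x$. The paper phrases this as an extremal/contradiction argument with a maximal collection, whereas you phrase it as an explicit greedy iteration, but these are the same reasoning; your bookkeeping of $\rho(n,\eps,\eps d)$, $|W_i|$, and the slack from $t^2 \le n/\log^{22}K$ and $x \le 10^3 d/\log^3 K$ matches the computation the paper carries out.
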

	\begin{poc}
	Let $\cS=\{S_1,\dots, S_{q'}\}$ be a maximal collection of vertex-disjoint copies of $K_{1, x}$ in $G-W$ and let $W'=\bigcup_{i\in [q']} V(S_i)$.
	Suppose for a contradiction that $q'<q \leq t^2 \log^{18} K/x$, then
	$$|W\cup W'|\leq t^2 \log^{18} K +  q\cdot (1+ x) \leq 3 t^2 \log^{18} K \leq \frac{\eps n}{\log^{3} K} \leq \frac{1}{20} \rho(n,\eps,\eps d)\cdot n.$$
	Lemma~\ref{lem_avg_deg} then implies that $d(G -(W\cup W'))\ge \frac{\eps d}{20\log^{2} K} \geq x$. Hence $G-(W\cup W')$ contains a vertex with degree at least $x$, contradicting the maximality of $\cS$.
	\end{poc}

	We are now ready to build $t$ internally disjoint units. Let us first describe the idea. Imagine we have already built several units with the set of interior vertices being $W$, and we are currently building additional units with centers $v_1,\dots, v_q$ with the set of interior vertices of these partially-built units being $W'$.
	Each partially-built unit is $(h'_1,h_2,h_3)$-unit for some $h'_1$ smaller than we want. The set $W_i$ will be the vertices of a partially-built unit centered at $v_i$. The above claim provides many stars in $G-(W \cup W')$. We find some $i$ and not-so-large $r$ such that $B_{G-(W \cup W_i)}^r(v_i)$ is large so we can find a path from this to one of the stars while avoiding $W\cup W_i$.
	This would add one more branch to one of the partially-built units yielding an $(h'_1+1,h_2,h_3)$-unit, and repeating this will yield an additional unit as desired. 

	\begin{claim}\label{disj_units}
		The graph $G$ contains $t$ internally disjoint $(10 t, t \log^{7} K, 40\eps^{-1}\log^3 K)$-units.
	\end{claim}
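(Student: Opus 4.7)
\textbf{Proof plan for Claim~\ref{disj_units}.} The plan is to construct the $t$ units one at a time. Suppose $i-1 < t$ internally disjoint units have already been built, and let $W$ denote the union of their interiors. Each $(10t, t\log^7 K, 40\epsilon^{-1}\log^3 K)$-unit has at most $1 + h_1 h_3 = O(t\log^3 K/\epsilon)$ interior vertices, so $|W| \leq O(t^2\log^3 K/\epsilon) \leq t^2\log^{18} K$ throughout the construction, safely within the hypothesis of Claim~\ref{disj_stars}.

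To build the $i$-th unit in $G - W$, I would first apply Claim~\ref{disj_stars} to obtain $q + q'$ pairwise vertex-disjoint copies of $K_{1, h_2}$ in $G - W$, where $q = \Theta(t\cdot\mathrm{polylog}(K))$ and $q' = \Omega(t)$. Call the first family $S_1, \ldots, S_q$ (candidate centers) and the second $R_1, \ldots, R_{q'}$ (candidate exterior stars). I would then iteratively construct pairwise vertex-disjoint paths of length at most $h_3 - 2$ in $G - W$, each from a leaf of some $S_i$ to a leaf of some $R_j$ (avoiding all star centers internally), with the restriction that the paths leaving a fixed $S_i$ end at pairwise distinct $R_j$'s. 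The process stops as soon as some $S_i$ accumulates $h_1 = 10t$ such paths, which yields the desired unit: take the center of $S_i$ as the unit's center, the corresponding $R_j$'s as exterior stars, and extend each path by one edge at each end to reach the centers of $S_i$ and $R_j$, giving total length at most $h_3$.

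The inductive step at each iteration uses Corollary~\ref{cor_single_large_set} to guarantee progress. Let $W^\ast$ denote $W$ together with all vertices of previously-built paths and all star centers (its size is of order $t^2\cdot\mathrm{polylog}(K)$), and for each $i$ let $W_i^\ast$ be the set of leaves of $S_i$ already used. Applying Corollary~\ref{cor_single_large_set} with $A_i = V(S_i) \setminus W_i^\ast$ (of size $\approx h_2$) and the appropriate choice of $k = \Theta(\epsilon d/h_2)$ produces some index $i$ for which $|B^r_{G - W^\ast - W_i^\ast}(A_i)| \geq \epsilon d/2$ after $r = O(\log q/\rho)$ steps. Since at most $h_1 - 1 < q'/2$ of the $R_j$'s are already connected to $S_i$, the union of the remaining $R_j$'s has size at least $(q'/2)h_2 = \Omega(t^2 \log^7 K) \gg \epsilon d$, so Lemma~\ref{short_path} (applied to the expansion supplied by the $(\epsilon,\epsilon d)$-robust-expander property of $G$) produces a short path from this ball to the union, completing one more branch. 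Pigeonhole then forces some $S_i$ to reach $h_1$ paths after at most $q(h_1 - 1) + 1$ iterations.

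The main obstacle is delicate parameter bookkeeping. One must simultaneously make $q$ large enough so that Corollary~\ref{cor_single_large_set} applies nontrivially (this requires $q \gtrsim k\log k/\rho$ with $\rho = \Theta(\epsilon/\log^2 K)$), yet small enough so that the total path-vertex count $\leq q h_1 h_3$ stays within the expansion thresholds of Corollary~\ref{cor_single_large_set} and Lemma~\ref{short_path} and also within the budget $qh_2 \leq t^2 \log^{18} K$ allowed by Claim~\ref{disj_stars}. The exponents $60$ in $d/(\log d)^{60}$ and $11$ in $\sqrt{n}/(\log K)^{11}$ in the definition of $t$ are calibrated precisely to provide enough slack in both regimes --- the degree-dominated regime $t \leq d/(\log d)^{60}$ and the space-dominated regime $t \leq \sqrt{n}/(\log K)^{11}$ --- for all these polylogarithmic inequalities to hold simultaneously across all $t$ outer iterations.
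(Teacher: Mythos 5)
Your high-level architecture matches the paper: build units one at a time (equivalently, take a maximal collection and derive a contradiction), use Claim~\ref{disj_stars} to supply stars, use Corollary~\ref{cor_single_large_set} to find an index $i$ with a large ball around $S_i$, then connect via a short-path lemma. However, there are two genuine problems with the parameter choices, the first of which is fatal.

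The critical gap is that you take the source stars $S_i$ to be copies of $K_{1,h_2}$ with $h_2 = t\log^7 K$, whereas the paper takes them to be copies of $K_{1,2d/\log^3 K}$ --- much larger, and of a different size than the target stars $R_j$. This asymmetry is essential. With your choice $x \approx h_2$ and $k = \Theta(\epsilon d/h_2)$, you fix $kx \approx \epsilon d$ at the minimum permitted by the lower expansion threshold $a = \epsilon d$. Lemma~\ref{many_large_sets} (via Corollary~\ref{cor_single_large_set}) requires $|W| \leq \tfrac{1}{2}\rho kx \approx \tfrac{\epsilon^2 d}{2\log^2 K}$, but your $W^\ast$ (interiors of $t-1$ built units plus star centers and paths) has size $\Omega(t^2 \log^3 K/\epsilon)$. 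When $t$ is close to $d/(\log d)^{60}$ one has $t^2 \approx d^2/(\log d)^{120} \gg d$ for $d$ in the range $d \geq \exp(\log^{1/6} n)$, so the required inequality fails badly. You cannot repair this by enlarging $k$: increasing $k$ forces $q \gtrsim k\log k/\rho$ to grow, and $q$ is already capped by the budget $q h_2 \leq t^2\log^{18} K$ in Claim~\ref{disj_stars}, which is insufficient (the cap $q \leq t\log^{11} K$ does not leave enough slack when $\log n$ can be much larger than $\log K$). The paper avoids the problem by making the $S_i$ of size $\Theta(d/\log^3 K)$ and taking $q = t^2\log^{20}K/d$, $k = q/\log^4 K$, so that $kx = t^2\log^{13} K$, whence $\rho kx/2 = \Theta(\epsilon t^2\log^{11} K)$ does dominate $|W| = O(t^2\log^8 K)$.

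A secondary issue: you insist that \emph{all} paths built across \emph{all} the candidate sources be pairwise vertex-disjoint, and correspondingly fold every previously-built path into the global avoidance set $W^\ast$. The paper does not do this. It only requires the paths within a single $\mathcal{P}_i$ to be pairwise disjoint, and when exploring from $S_i$ it avoids $W \cup W_i$ where $W_i$ consists only of the paths emanating from $S_i$ itself; paths from other $S_j$ are \emph{not} avoided. This is what keeps $W$ at size $O(t^2\log^8 K)$ rather than blowing it up by a further factor of $q$, and it is harmless because at the end you only use the paths from the one winning $S_i$. Even if you fixed the star-size asymmetry, carrying all paths in $W^\ast$ would add another polylog factor that your budget cannot absorb.

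(There is also a small bookkeeping point: you take the $R_j$ to have exactly $h_2$ leaves, but the unit you assemble needs each chosen $R_j$ to retain $h_2$ leaves \emph{after} the path-vertices are removed; the paper takes $2h_2$-leaf stars for this reason. This is easy to fix and not a structural flaw.)
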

	\begin{poc}
		Let $\mathcal{S}$ be a maximum collection of internally disjoint $( 10t, t \log^{7} K, 40\eps^{-1}\log^3 K)$-units of $G$. Suppose that $|\mathcal{S}| < t$. Let $W'$ be the set of interior vertices of the units in $\mathcal{S}$ and set $q= t^2 \log^{20} K / d$. As each unit has at most $1+ 10 t \cdot 40\eps^{-1}\log^3 K$ interior vertices, 
		$$|W'| \leq \eps t\cdot (1+ 10 t \cdot 40\eps^{-1}\log^3 K)  \leq  t^2 \log^{4} K.$$
		Claim~\ref{disj_stars} together with the fact that $t (\log K)^{10} \leq \frac{d}{(\log d)^{60}} \cdot (\log n)^{10} \leq d$ implies that $G-W'$ contains at least $100t$ vertex-disjoint copies of $K_{1, 2t \log^7 K}$. 
		Let $ R_1, \cdots, R_{100t}$ be vertex-disjoint copies of $K_{1,2t\log^{7}{K}}$ in $G-W'$ and let $W = W'\cup \bigcup_{i\in [100t]} V(R_i)$.
		Then we have
		\begin{align}\label{eq: W size}
		|W| \leq |W'|+ 100t(1+2t\log^7 K) \leq t^2 \log^{8} K.
		\end{align}
		Apply Claim~\ref{disj_stars} once again, there exist at least $q$ vertex-disjoint copies of $K_{1, 2d/\log^3 K}$ in $G-W$. 
		Let $S_1, \cdots, S_{q}$ be vertex-disjoint copies of $K_{1, 2d/\log^3 K}$ in $G-W$.

		We aim to build $10 t$ paths from $S_i$ for some $i\in [q]$ to distinct $R_j$ to obtain a unit.
		\begin{subclaim}\label{connects_units}
			There exists $i \in [q]$ and  
			$10t$ vertex-disjoint paths $P_1,\dots, P_{10 t}$ each of length at most $30\eps^{-1}\log^3 K$ in $G-W'$ such that $P_\ell$ is a path between $S_i$ and $R_{j_\ell}$.
			Moreover, $j_1,\dots, j_{10 t}$ are all distinct.
		\end{subclaim}
		\begin{proof}[Proof of subclaim]

			Suppose that such $i\in [q]$ does not exist. 
			For each $i\in [q]$, let $\mathcal{P}_i$ be a maximal collection of vertex-disjoint paths of length at most $30 \eps^{-1} \log^3 K$ from $S_i$ to $R_j$ where no two paths in the collection connect $S_i$ to $R_j$ for the same $j$.
			Let 
			$$ W_i = \bigcup_{P\in \mathcal{P}_i} V(P) \enspace \text{and} \enspace A_i = V(S_i)\setminus W_i.$$
			Note that for  distinct $i,i'\in [q]$, the paths in $\mathcal{P}_i$ and $\mathcal{P}_{i'}$ are not necessarily disjoint. For each $i\in [q]$, as each path has length at most $30 \eps^{-1} \log^3 K$ and $|\mathcal{P}_i| < 10t$ by the assumption, we have 
			\begin{align}
			|W_i| \leq 300\eps^{-1}t\log^3K \enspace \text{ and } \enspace  |A_i|\geq |S_i| - |W_i| \geq d/\log^3 K.\label{eq: Ai size} 
			\end{align}

            We now wish to apply Corollary~\ref{cor_single_large_set} with the following parameters.\newline

            \noindent
{
\begin{tabular}{c|c|c|c|c|c|c|c|c|c}
objects/parameter &  $[\eps d, \frac{n}{2}]$ & $\frac{\eps}{\log^2 K}$ & $q/\log^4 K$  & $d/\log^3 K$ & $q$ & $15\eps^{-1} \log^3 K$ & $W$ & $W_i$ & $A_i$ \\ \hline
playing the role of & $[a,b]$ & $\rho$ & $k$ & $x$ &   $q$ & $r$ & $W$ & $W_i$ & $A_i$ 
\end{tabular}
}\newline \vspace{0.2cm}
   
        For this application, we need to check several conditions in Lemma~\ref{many_large_sets} and Corollary~\ref{cor_single_large_set}. First, $G$ has an $(\eps d, \frac{n}{2}, \frac{\eps}{\log^2 K})$-expansion-property. 
		The conditions $q>20k 
		\log k/\rho$, $a \leq kx \leq \frac{b}{20}$, and $r \geq 10 \log k/\rho$ can be verified as follows: $q  > 20 \cdot \frac{q}{\log^4 K} \cdot \log K \cdot (\frac{\eps}{\log^2 K})^{-1}$, and $\eps d \leq \frac{t^2 \log^{16} K }{d} \cdot \frac{d}{\log^3 K} \leq \frac{n}{40}$, and $r = 15\eps^{-1} \log^3 K \geq 10 \cdot \log q \cdot (\frac{\eps}{\log^2 K})^{-1}$,
		where the first and last inequalities follow from $\log q \leq \log \frac{n}{d} \leq \log K$. Also, the conditions in Lemma~\ref{many_large_sets} hold as $|W| \leq t^2 \log^8 K \leq \frac{1}{2} \cdot \frac{\eps }{\log^2 K} \cdot\frac{t^2 \log^{16} K }{d} \cdot \frac{d}{\log^3 K}$, and $|W_i| \leq 300 \eps^{-1} t\log^3 K \leq  \frac{1}{4} \cdot \frac{\eps}{\log^2K} \cdot \frac{d}{\log^3 K}$. 
        Conditions \emref{mandu1}, \emref{mandu2} an \emref{collectively_large_B} follow immediately from the definition of $W,W_i$ and $A_i$.
        Hence, Corollary~\ref{cor_single_large_set} yields that there exists $i\in [q]$ such that $|B^{r}_{G- (W\cup W_i)}(V(S_i))| \geq  \frac{1}{2}t^2 \log^{13} K.$
			
			By the assumption, the paths in $\mathcal{P}_i$ connect $S_i$ to $R_j$ for fewer than $10t$ distinct $j\in [100 t]$.
			Let $K\subseteq [100t]$ be the set of indices $j$ such that no path in $\mathcal{P}_i$ connects $S_i$ and $R_j$, then $|K|\geq 90t$.
			Let $R_{K} = \bigcup_{j \in K} V(R_i)$, then 
			$|R_K| \geq 90t\cdot t \log^{7} K \geq t^2 \log^{7} K.$
			On the other hand,
			$$|W'| + |W_i| \leq t^2 \log^4 K+ 300 \eps^{-1} t \log^{3}{K}  \leq  \rho(t^2 \log^{7} K)\cdot \frac{1}{4}t^2 \log^{7} K.$$ 
			Thus we can apply Lemma~\ref{lem: short_path_general} to obtain a path of length at most $\frac{10}{\eps} \log^2 K \cdot \log(n/x) \le 10\eps^{-1} \log^3 K$ that connects $R_K$ and $B^{r}_{G- (W\cup W_i)}(V(S_i))$ disjoint from $W'\cup W_i$.
			As $r \leq 15 \eps^{-1}\log^3 K$, this yields a path from $S_i$ to $R_K$ with length at most $10\eps^{-1} \log^3 K + r \leq 30\eps^{-1} \log^{3} K$ which avoids any vertices in $W'\cup W_i$. Hence adding this path to $\mathcal{P}_i$ contradicts the maximality of $\mathcal{P}_i$.
			Therefore, there exists $i \in [q]$ such that
			$|\mathcal{P}_i|\geq 10t$. This finishes the proof of the subclaim.
		\end{proof}
		
		Let $i\in [q]$ and $\mathcal{P}_i$ be the set of $10t$ vertex-disjoint paths obtained from Subclaim~\ref{connects_units}.
		Extending each path in $\cP_i$ to the center of $S_i$, say $v$, and the center of each $R_j$, we have a vertex $v$ and $10t$ disjoint stars $R_j$ and internally disjoint paths connecting $v$ and each star. Note that we increase the size of each path by at most $2$ so the length of the path is at most $40 \eps^{-1} \log^3 K$.
		
		Some leaves in the stars may belong to a path in $\mathcal{P}_i$. However, there are at most $10t \cdot 40 \eps^{-1} \log^3 K < t \log^{5} K$ vertices of the paths in $\mathcal{P}_i$, so each star $R_j$ still have at least $2t \log^{7} K - t\log^{7} K \geq t\log^{7} n$ leaves remaining.  Hence, these paths and the stars all together form a $(10t, t \log^{7} K, 40\eps^{-1}\log^{3} K)$-unit, contradicting the maximality of $\mathcal{S}$.
		Therefore, we have at least $t$ internally disjoint $(10t, t \log^{7} K, 40\eps^{-1}\log^3 K)$-units in $G$, finishing the proof of Claim~\ref{disj_units}.
		\end{poc}

	Let $U_1, \cdots, U_t$ be internally disjoint $(10t, t \log^{7} K, 40\eps^{-1}\log^{3} K)$-units we obtain from Claim~\ref{disj_units}. 
	We now use these internally disjoint units to construct a subdivision of $K_t$. 	
	For each $i\in [t]$, let $v_i$ be the center of $U_i$ and let $W$ be the set of interior vertices of $U_1, \cdots, U_t$. We run the following Algorithm \ref{algo_1}.

	Note that in Line 7, we only delete one exterior star of $U_i$ and $U_j$, because $P_{ij}$ was taken to be a shortest path between $U_i$ and $U_j$ in Line $4$.
    We will inductively prove that $|V(P_{ij})| \leq 100\log^{4} n$ for every $1\leq i<j\leq t$.
	Suppose that $|V(P_{ij}) | \leq 100\eps^{-1}\log^{3} K$ holds for all previous iterations so that $P$ has size at most ${t \choose 2} \cdot 100 \eps^{-1}\log^3 K$. 
	For each $\ell \in [q]$, at most $t$ exterior stars are deleted on Line 7 of the previous iterations. Some stars are deleted on Line 9 of the previous iterations, but this happens only when $\frac{1}{2}t\log^{7} K$ leaves are in $P$. Hence, we have deleted at most 
	$t+\frac{|P|}{\frac{1}{2}t \log^7 K} \leq 2t $ stars of $U_\ell$.

 \begin{algorithm}[H]
		\caption{ }
		\label{algo_1}
		\begin{algorithmic}[1]
			\State $P=\varnothing$.
			
			\For {$i=1, 2, \cdots, t$}
			\For {$j=i+1, i+2, \cdots, t$}
			\State Connect exteriors of $U_i$ and $U_j$ by a shortest path $P_{ij}$  while avoiding $P$ and $W$. \label{algo_line_3}
			\State Extend $P_{ij}$ along the paths inside of $U_i$ and $U_j$ so that $P_{ij}$ connects $v_i$ and $v_j$.
			\State $P \gets P \cup V(P_{ij})$.
			\State Delete the exterior stars of $U_i$ and $U_j$ that intersect with $P_{ij}$.
			\For {$k=1, 2, \cdots, t$}
			\State If an exterior star of $U_k$ has at least $\frac{1}{2} t \log^7 K$ leaves  contained in $P$, then delete the exterior star from $U_k$.
			\EndFor
			\EndFor
			\EndFor
		\end{algorithmic}
	\end{algorithm}
	
	Hence both $U_i$ and $U_j$ still contain at least $8t$ exterior stars and each of them has at least $\frac{1}{2}t\log^7 K$ leaves in $G - P$.
	Thus, $U_i$ and $U_j$ both have at least $4t^2 \log^7 K$ vertices in their exterior disjoint from $P$. 	We have
	$$|P \cup W| \leq {t \choose 2} \cdot 100\eps^{-1}\log^{3} K + t(1+ 10t\cdot 40 \eps^{-1} \log^{3} K ) \leq t^2\log^{4} K\leq t^2 \log^7 K \cdot \rho(4t^2 \log^7 K).$$
	Hence Lemma~\ref{lem: short_path_general} implies that there exists a path of length at most $\frac{10}{\eps} \log^3 K$ connecting exteriors of $U_i$ and $U_j$ while avoiding $W$ and $P$.
	Therefore, we can always find a path $P_{ij}$ of length at most $10\eps^{-1} \log^3 K$ in Line \ref{algo_line_3} of the algorithm. When we extend $P_{ij}$ to $v_i$ and $v_j$, we only use interior vertices of $U_i$ and $U_j$, thus  $|V(P_{ij})| \leq 10\eps^{-1} \log^3 K+2(1+40 \eps^{-1}\log^3 K)\le 100\eps^{-1}\log^3 K$.
	
	When the algorithm terminates, we have a path $P_{ij}$ connecting $v_i$ and $v_j$ for all $1 \leq i < j \leq t$, and by the construction, they are internally disjoint.
	Therefore, $G$ contains a subdivision of $K_t$. This finishes the proof of Lemma~\ref{lem_dense}.

	
\section{Middle layer $(\eps,d^2)$-robust-expander}\label{sec_d2_expander}
In this section, we will prove Lemma~\ref{lem_d2_expander}. 
Let $0< 1/n \ll b \ll \eps <1$ and $d$ be as given in the lemma.
Assume that $G$ does not have a $K_{bd}$-subdivision as we are done otherwise. 
As $\eps d  \leq d^2 \leq d^2 \log^9 n$, Lemma~\ref{lem_bounded_maxdeg} applies, and yields a subgraph of $G$ with at least $n/2$ veritces. For simplicity, we denote this subgraph by $G$. By reducing the value of $n$ by at most half, we let $n =|G|$. Note that we still have $1/n \ll b$.
So assume that we have an $n$-vertex
 $(\eps/2,d^2)$-robust expander-subgraph $G$ satisfying
\begin{align}\label{eq: d range in d2 expander}
    \frac{d}{8} \leq \delta(G)\leq d(G), \enspace \Delta(G)\leq 20 d^2 (\log n)^{10}, \text{and} \enspace \exp(\log^{1/6}n) \leq d\leq \frac{2\sqrt{n}}{(\log n)^{1000}}.
\end{align}
Let $ h_1 = 2 (\log n)^{10}, \enspace h_2 = \frac{d}{\log^3 n}, \enspace  h_3 = \log^4 n, \enspace h_4= \frac{bd}{(\log\log d)^4}  \enspace \text{ and } \enspace h_5= 50 \log^4 n.$
We aim to build $bd$ internally disjoint $(h_4, h_5, h_1/2, h_2, h_3)$-webs and use them to build a $K_{\Omega(h_4)}$-subdivision. For this, we first find many stars and use them to construct units.
\begin{claim}\label{cl: stars in d2 expander}
	For any vertex set $Q \subseteq V(G)$ with $|Q|\leq \frac{n}{\log^5 n}$,
	the graph $G-Q$ has at least $\frac{n}{d \log^5 n}$ vertex-disjoint copies of $K_{1,4h_2}$.
\end{claim}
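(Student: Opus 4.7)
The plan is to mimic the proof of Claim~\ref{disj_stars}, using the robust-expander version of Lemma~\ref{lem_avg_deg} to show that removing the vertex set of any maximal packing of stars still leaves enough average degree to produce one more star, contradicting maximality.

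More precisely, let $\mathcal{S}=\{S_1,\dots,S_q\}$ be a maximum collection of vertex-disjoint copies of $K_{1,4h_2}$ inside $G-Q$, and suppose toward a contradiction that $q< n/(d\log^5 n)$. Writing $W'=\bigcup_{i\in[q]} V(S_i)$, we have
\[
|W'|\le q(1+4h_2)\le \frac{n}{d\log^5 n}\cdot \frac{5d}{\log^3 n}\le \frac{5n}{\log^8 n},
\]
so $|Q\cup W'|\le 2n/\log^5 n$ for $n$ large. I would then check that this set is small enough to invoke Lemma~\ref{lem_avg_deg} for the $(\eps/2,d^2)$-robust-expander $G$. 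By the upper bound $d\le 2\sqrt{n}/(\log n)^{1000}$ from \eqref{eq: d range in d2 expander}, we have $15n/d^2\le 15n$, hence
\[
\rho(n,\eps/2,d^2)\ge \frac{\eps/2}{\log^2(15n)}\ge \frac{\eps}{8\log^2 n},
\]
so $|Q\cup W'|\le \tfrac{1}{20}\rho(n,\eps/2,d^2)\cdot n$ for $n$ large, and also $d^2<n/10$.

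Applying Lemma~\ref{lem_avg_deg} with this $W=Q\cup W'$ yields
\[
d\bigl(G-(Q\cup W')\bigr)\ge \frac{1}{20}\rho(n,\eps/2,d^2)\cdot d(G)\ge \frac{\eps}{160\log^2 n}\cdot \frac{d}{8}\ge \frac{\eps d}{1300\log^2 n},
\]
which is strictly larger than $4h_2=4d/\log^3 n$ once $\log n$ exceeds an absolute constant depending on $\eps$. In particular, $G-(Q\cup W')$ contains a vertex of degree at least $4h_2$, producing a copy of $K_{1,4h_2}$ disjoint from every $S_i$. This contradicts maximality of $\mathcal{S}$, so $q\ge n/(d\log^5 n)$ as claimed.

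I do not expect any real obstacle here: the only things to double check are that the upper bound $d\le 2\sqrt{n}/(\log n)^{1000}$ in \eqref{eq: d range in d2 expander} keeps $\rho(n,\eps/2,d^2)$ only polylogarithmically small (so that both $|Q\cup W'|$ stays below the allowed $\tfrac{1}{20}\rho n$ and the resulting average degree still dominates $4h_2$), and that $d^2<n/10$ so Lemma~\ref{lem_avg_deg} applies at all. Both follow cleanly from the regime $\exp(\log^{1/6}n)\le d\le 2\sqrt{n}/(\log n)^{1000}$ given in \eqref{eq: d range in d2 expander}.
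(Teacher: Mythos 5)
Your proof is correct and follows essentially the same approach as the paper: take a maximal packing of stars $K_{1,4h_2}$ in $G-Q$, observe that if the packing were too small then $Q$ together with the star vertices would still be a set small enough for Lemma~\ref{lem_avg_deg}, and conclude that the remainder of the graph has average degree at least $4h_2$, producing one more disjoint star and contradicting maximality. The only cosmetic difference is that you track the constants and the verification of the hypotheses of Lemma~\ref{lem_avg_deg} slightly more explicitly than the paper does.
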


\begin{claim}\label{cl: units in d2 expander}
	For any vertex set $W \subseteq V(G)$ with $|W|\leq \frac{n}{(\log n)^{100}}$, the graph $G-W$ contains at least $\frac{n}{d(\log n)^{150}}$ vertex-disjoint $(h_1, 2 h_2, h_3)$-units.
\end{claim}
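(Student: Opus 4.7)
The plan is to build the units greedily and invoke Lemma~\ref{lem_units} at each step to extract one additional unit. Let $\mathcal{U}$ be a maximum collection of internally disjoint $(h_1, 2h_2, h_3)$-units contained in $G-W$. Supposing $|\mathcal{U}| < \frac{n}{d(\log n)^{150}}$, I will derive a contradiction by producing one more such unit inside $G-W$ that is internally disjoint from those already constructed.

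Let $\tilde W$ be the union of $W$ with the vertex sets of all units in $\mathcal{U}$. Since each unit has at most $1 + h_1 h_3 + 2h_1 h_2 \leq 4h_1 h_2 = O(d(\log n)^7)$ vertices, we get
\[
|\tilde W| \leq |W| + |\mathcal{U}| \cdot O(d(\log n)^7) \leq \tfrac{n}{(\log n)^{100}} + \tfrac{O(n)}{(\log n)^{143}} \leq \tfrac{2n}{(\log n)^{100}},
\]
which is comfortably below $\frac{n}{(\log n)^{5}}$. Hence Claim~\ref{cl: stars in d2 expander} applied to $Q := \tilde W$ supplies at least $\frac{n}{d(\log n)^5}$ vertex-disjoint copies of $K_{1, 4h_2}$ in $G - \tilde W$.

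I now aim to feed these stars into Lemma~\ref{lem_units} with $x := 4h_2 = 4d/\log^3 n$, the unit parameters $(h_1, 2h_2, h_3)$, and $\tilde W$ in the role of $W$. The expansion input is free: since $G$ is an $(\eps/2, d^2)$-robust-expander, it has $(d^2, n/2, \rho)$-expansion with $\rho = \Omega(\eps/\log^2 n)$. The key point is to choose $k$ large enough that $\rho^2 kx/100$ dominates $|\tilde W|$; concretely I will take $kx \asymp n/(\log n)^{95}$, i.e.\ $k = \Theta(n/(d(\log n)^{92}))$, which is comfortably within $[2d^2/x, n/(30x)]$ by the hypothesis $d \leq 2\sqrt{n}/(\log n)^{1000}$. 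For $p$ I will take a value in the window $\big[C n/(d(\log n)^{84}),\ c\eps^2 n/(\log n)^{113}\big]$, which is non-empty because $d \geq \exp(\log^{1/6}n)$ is super-polylogarithmic; the available $\frac{n}{d(\log n)^5}$ stars easily provide the needed $2p$. A routine check then verifies the remaining inequalities of Lemma~\ref{lem_units}, namely $x \geq 200 p h_1 h_3/(\rho^2 k) + 2h_2$, $p \geq 4kh_3 + 2\rho k h_1$, $4(nh_1/(px))^3 \leq k$, and $h_3 \geq 30 \rho^{-1} \log(10n/(\rho k x))$. Lemma~\ref{lem_units} then yields an $(h_1, 2h_2, h_3)$-unit inside $G - \tilde W$, contradicting the maximality of $\mathcal{U}$.

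The main technical hurdle is the simultaneous verification of all these inequalities across the entire range $\exp(\log^{1/6} n) \leq d \leq 2\sqrt n/(\log n)^{1000}$: the naive choice $k \approx d^2/x$ leaves $\rho^2 kx$ too small to absorb $\tilde W$, so one really must push $k$ up towards $n/(d\cdot\text{polylog})$, and then simultaneously balance the lower and upper bounds on $p$. Once the parameters are tuned as above, everything reduces to bookkeeping with polylogarithmic factors, using $d \geq \exp(\log^{1/6}n)$ to dominate polylogarithmic penalties whenever needed.
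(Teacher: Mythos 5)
Your argument is essentially the paper's proof: take a maximal collection of units, absorb it together with $W$ into a single forbidden set, re-supply $K_{1,4h_2}$ stars via Claim~\ref{cl: stars in d2 expander}, and invoke Lemma~\ref{lem_units} with $x = 4h_2$ and with $k$ scaled as $n/(d\cdot\mathrm{polylog})$ so that $\rho^2 kx$ dominates $|\tilde W|\leq 2n/(\log n)^{100}$ --- the paper takes $p = n/(d\log^{20} n)$ and $k = n/(d\log^{30} n)$, whereas you choose somewhat larger polylogarithmic exponents, but both selections pass the bookkeeping, and your diagnosis that the naive $k\asymp d^2/x$ fails (because $\rho^2 d^2\ll n/\mathrm{polylog}$) is exactly the right observation. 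One small point to tighten: fix $p$ near the lower end of your window, say $p\asymp n/(d(\log n)^{84})$, since at the upper end $p\asymp \eps^2 n/(\log n)^{113}$ the requirement $2p\leq n/(d(\log n)^5)$ fails once $d\gg(\log n)^{108}$, which always happens here as $d\geq\exp(\log^{1/6}n)$.
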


The proof of Claim~\ref{cl: stars in d2 expander} is similar to Claim~\ref{disj_stars}. The proof of Claim~\ref{cl: units in d2 expander} amounts to checking that the technical conditions in~Lemma~\ref{lem_units} are satisfied and hence it can be applied to obtain the desired units. As these proofs are not illuminating, we include them in the arXiv appendix.

Recall that the main issue in this lemma was to overcome the degree constraint.
Claim~\ref{cl: units in d2 expander} provides many vertex-disjoint units disjoint from a given not-too-large vertex set $W$. Using this, we aim to build many internally disjoint webs.
However, in order to obtain $(h_4, h_5, h_1, h_2, h_3)$-webs, we need stars of size $\Omega(d)$ which Claim~\ref{cl: stars in d2 expander} does not provide.
We consider two cases depending on whether there exists a set $Q\subseteq V(G)$ making $G-Q$ very sparse, and obtain large stars in different ways. 

\smallskip

\noindent\textbf{Case 1. Skewed case:} there exists $Q$ with $|Q| \leq \frac{n}{(\log n)^{50}}$ such that $d(G-Q) < \frac{d}{100}$. Fix such a set $Q$.
Recall that $G$ has the maximum degree at most $20 d^2 (\log n)^{10}$.
For this case, let $$x = \frac{1}{2}d (\log n)^{25}, \enspace p=\frac{d}{200}, \enspace \text{and } \rho_1=\rho_2=\frac{\eps}{\log^2(15 n)}.$$
As many edges are incident to $Q$, we can find many vertex-disjoint stars $K_{1,2x}$ centered at $Q$.

\begin{claim}\label{heavy_q}
	The graph $G$ contains $\frac{d}{100}$ vertex-disjoint copies of $K_{1,2x}$ whose centers are in $Q$ and whose leaves are in $V(G) \setminus Q$.
\end{claim}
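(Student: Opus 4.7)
The plan is to build the desired stars greedily and bound the overlap by a double counting argument. First I would establish the key edge-counting estimate $e(Q, V(G) \setminus Q) \geq nd/20$: the hypothesis $\delta(G) \geq d/8$ yields $\sum_{u \in V(G) \setminus Q} d_G(u) \geq (n - |Q|) d/8$, while $d(G - Q) < d/100$ gives $2 e(G - Q) < (n - |Q|) d/100$, so subtracting and using $|Q| \leq n/(\log n)^{50}$ produces $e(Q, V(G) \setminus Q) \geq (n - |Q|)(d/8 - d/100) \geq nd/20$.

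Next I take a maximal collection $\mathcal{S}$ of vertex-disjoint copies of $K_{1, 2x}$ with centers in $Q$ and leaves in $V(G) \setminus Q$, writing $C$ for the set of centers and $L$ for the set of leaves. Suppose for contradiction that $|\mathcal{S}| = k < d/100$. By maximality every $v \in Q \setminus C$ has fewer than $2x$ neighbors in $V(G) \setminus (Q \cup L)$; summing this over $Q \setminus C$ and combining with $\sum_{v \in Q \setminus C} d_G(v, V(G) \setminus Q) \geq nd/20 - k \Delta(G) \geq nd/40$ and $2x |Q| \leq nd/(\log n)^{25}$ gives the lower bound $e(Q \setminus C, L) \geq nd/80$.

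The upper-bound step is the delicate one. I would choose the leaves throughout the greedy to lie in $V^{\mathrm{low}} := \{u \in V(G) \setminus Q : d_G(u, Q) \leq T\}$ for a carefully tuned threshold $T$; Markov's inequality bounds $|V(G) \setminus Q \setminus V^{\mathrm{low}}|$ by $e(Q, V(G) \setminus Q)/T$, ensuring that most of the bipartite edges still fall into $V^{\mathrm{low}}$. With $L \subseteq V^{\mathrm{low}}$ one has $e(Q \setminus C, L) \leq |L| \cdot T = 2xkT$; taking $k < d/100$ and $T$ slightly smaller than $n/(d(\log n)^{25})$ forces this quantity strictly below $nd/80$, contradicting the lower bound.

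The main obstacle will be balancing the threshold $T$: it must be small enough that the product $2x \cdot (d/100) \cdot T$ falls below $nd/80$, yet large enough that $V^{\mathrm{low}}$ retains a positive fraction of the edges from $Q$ and every heavy $v \in Q$ still sees at least $2x$ of its neighbors inside $V^{\mathrm{low}}$, so that the greedy can actually proceed for $d/100$ steps. The hypothesis $d \leq 2\sqrt{n}/(\log n)^{1000}$ together with the bound $\Delta(G) \leq 20 d^2 (\log n)^{10}$ inherited from Lemma~\ref{lem_bounded_maxdeg} is precisely what guarantees that such a $T$ exists.
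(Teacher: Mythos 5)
Your opening edge count $e(Q, V(G)\setminus Q) \geq nd/20$ and the maximality observation at the start of your second paragraph are both correct, but the threshold argument in the last step has a genuine gap, and the overall route is headed in a direction that cannot close.

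There are two problems. First, Markov's inequality only bounds $|V^{\mathrm{high}}| := |(V(G)\setminus Q)\setminus V^{\mathrm{low}}|$; it says nothing about how many of the edges from $Q$ land in $V^{\mathrm{high}}$ — those could be essentially all of $e(Q, V(G)\setminus Q)$. So the assertion that ``most of the bipartite edges still fall into $V^{\mathrm{low}}$'' does not follow. Worse, once you restrict the greedy to $V^{\mathrm{low}}$, maximality only controls $e(Q\setminus C, V^{\mathrm{low}}\setminus L)$, not $e(Q\setminus C, V(G)\setminus(Q\cup L))$, so the lower bound $e(Q\setminus C, L)\geq nd/80$ from your second paragraph (which used unrestricted maximality) is no longer available; redoing it now needs an upper bound on $e(Q\setminus C, V^{\mathrm{high}})$, which you do not have. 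Second, even setting that aside, the two constraints on $T$ are incompatible in the relevant range of $d$: the upper-bound side needs $T \lesssim n/(d(\log n)^{25})$, while keeping $e(Q,V^{\mathrm{high}})\ll nd$ even via the crude bound $e(Q,V^{\mathrm{high}})\leq |V^{\mathrm{high}}|\Delta(G)\leq |Q|\Delta(G)^2/T$ needs $T\gtrsim d^3/(\log n)^{30}$. These coexist only when $d$ is roughly at most $n^{1/4}$ up to polylogs, whereas the claim must cover $d$ all the way up to $2\sqrt n/(\log n)^{1000}$. So the last sentence of your proposal — that the hypotheses guarantee such a $T$ — is not true.

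The structural reason your count breaks is that routing the argument through $e(Q\setminus C, L)$ forces you to pay $\Delta(G)$ on each of the up to $2xk\approx d^2(\log n)^{25}/100$ used leaves, and that is far too expensive. The paper sidesteps this entirely by counting from the unused side. It lower-bounds
\begin{align*}
e\bigl(Q\setminus S,\ (V(G)\setminus Q)\setminus S\bigr) \;\geq\; \sum_{v\in (V(G)\setminus Q)\setminus S} d(v) \;-\; 2e\bigl(G[V(G)\setminus Q]\bigr) \;-\; \sum_{v\in S\cap Q} d(v),
\end{align*}
where the first term uses only $\delta(G)\geq d/8$, the middle term is controlled by $d(G-Q)<d/100$, and the maximum degree $\Delta(G)$ appears \emph{only} in the last term, on the $m<d/100$ centres of the stars. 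The asymmetry between fewer than $d/100$ centres and roughly $d^2(\log n)^{25}$ leaves is exactly why the unused-side count works while the used-side count does not. Dividing the resulting bound $\geq 2x|Q|$ by $|Q\setminus S|\leq |Q|$ produces a vertex of $Q\setminus S$ with at least $2x$ unused neighbours, contradicting maximality directly with no threshold needed.
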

\begin{poc}
Let $S_1, \cdots, S_m$ be a maximal collection of vertex-disjoint copies of $K_{1,2x}$ whose centers are in $Q$ and leaves are in $L = V(G)\setminus Q$. 
Let $S = \bigcup_{i\in [m]} V(S_i)$.
Assume that $m<\frac{d}{100}$, as otherwise we are done.
We have $|Q|+|S| \leq \frac{n}{(\log n)^{50}} + 4xm \leq n/10$ by \eqref{eq: d range in d2 expander}, so we have
$|L| \geq n/2$, and $|L\setminus S| \geq |L|/2.$
As we have $\frac{d}{8}\leq \delta(G) \leq \Delta(G)\leq 20 d^2(\log n)^{10}$ from \eqref{eq: d range in d2 expander} and $d(G[L])< \frac{d}{100}$, the number of edges between $Q\setminus S$ and $L\setminus S$ is at least 
\begin{align*}
    e(Q\setminus S, L\setminus S) & \geq 
 \sum_{v\in L\setminus S} d(v) - 2e(G[L]) - \sum_{v\in S\cap Q} d(v) \geq \frac{1}{2}|L| \cdot \frac{d}{8} - \frac{d}{100}|L| - m\Delta(G) \\
&\geq \frac{d}{30}|L|  - \frac{20}{100}d^3 \log{n} \geq |Q| \cdot d (\log n)^{25}.
\end{align*}
The final inequality holds as $|L|\geq n/2$ and \eqref{eq: d range in d2 expander} imply that $n \geq \frac{1}{4}d^2 (\log n)^{2000}$.
Thus, there exists a vertex $v \in Q \setminus S$ which has at least $d \log^{25} n$ neighbours in $L\setminus S$, condtradicting the maximality of $S_1,\dots, S_m$. This shows that $m \geq \frac{d}{100}$, and proves the claim.
\end{poc}
Now we construct webs as follows.
\begin{claim}\label{claim_skewed_webs}
In Case 1, there exist $\frac{d}{10^4}$ internally vertex-disjoint $(\frac{1}{2}d, h_5, \frac{1}{2}h_1, h_2, h_3)$-webs.
\end{claim}
\begin{poc}
	Let $S_1,\dots, S_{d/100}$ be vertex-disjoint copies of $K_{1,2x}$, obtained by Claim~\ref{heavy_q}. For each $i\in [d/100]$, let $v_i$ be the center of $S_{i}$ and let $C=\{v_1, v_2, \cdots, v_{d/100}\}$. 
 	Choose a maximal collection of $(d/2,h_5, h_1/2, h_2,h_3)$-webs $U_1, U_2, \cdots, U_s$ such that each $U_i$ does not contain any $v_j$ except at its center and the exterior of $U_i$ does not intersect with any $S_j$ with $j\in [d/100]$. 
 	Assume that $s<\frac{d}{10^4}$.
	Consider the stars $S_i$ whose center does not belong to any of $U_1, \cdots, U_s$. Let $I_0$ be the set of indices of such stars. 
 	In other words, let 
 	$$I_0 =  \{ i\in [d/100]: v_i \notin U_j \text{ for any } j\in [s]\}.$$
 	Then the choice of $U_1, \dots, U_s$ ensures that $|I_0|= \frac{d}{100}-s \geq \frac{d}{150}$.
 	
	Let $B$ be a set of the interior vertices of $U_1, \dots, U_s$. Then,
	$$|B| \leq \frac{d}{10^4}\Big(1+ \frac{d}{2}\cdot h_5 + \frac{d}{2} \cdot \frac{h_1h_3}{2}\Big) \leq \frac{d}{10^4}\left(1+ d(\log^5 n + (\log n)^{14}) \right) \leq d^2 (\log n)^{14}.$$
	Now, let $I=\{ i\in I_0: |S_i\cap B|\leq x \}.$
	As $x= \frac{1}{2}d(\log n)^{25}$, we have $|I| \geq |I_0|  - \frac{d^2 (\log n)^{14}}{x} \geq \frac{d}{200}= p.$
	As $B$ does not contain any center vertices of the stars $S_i$ for $i \in I_0$, this provides $p$ vertex-disjoint stars $S^1, \dots, S^{p}$ in $G-B$ with $\{S^1,\dots, S^{p}\}\subseteq \{S_1,\dots, S_{d/100}\}$. Moreover, letting $C'$ be the set of the center vertices of $S^1,\dots, S^p$, we have $C'\subseteq C$. 
	Also, letting $S = \bigcup_{i\in [d/100]}V(S_i)$, we have
	$|S| \leq \frac{d}{100}\cdot (1+2x) \leq d^2 \log^{25} n\leq \frac{n}{d (\log n)^{150}}.$
	Thus, by Claim~\ref{cl: units in d2 expander}, the graph $G-B-S$ has at least $q=d \log^{20} n \leq \frac{n}{d(\log n)^{150}}$ vertex-disjoint $(h_1,2h_2, h_3)$-units.
	Hence, $G-B - (C \setminus C')$ contains $p$ stars with $x$ leaves and $q$ distinct $(h_1, h_2, h_3)$-units in such a way that all stars and units are pairwise vertex-disjoint.
	
	We wish to apply Lemma~\ref{lem_webs} with the parameters $p,q,x, h_2, h_3, h_5$ and $\rho_1$, $\rho_2$ playing roles of themselves and $h_1/2$, $d/2$, and $B \cup (C \setminus C')$ playing roles of $h_1$, $h_4$, and $W$, respectively. Let us check all the conditions. As $G$ is an $(\eps/2,d^2)$-expander and $d^2\leq px= \frac{1}{400}d^2(\log n)^{25} < \frac{n}{100}$, it has $(px,4px,\rho_1)$-expansion-property as well as $(px,n/2,\rho_2)$-expansion-property.
Moreover, we have $h_3 \leq \log^4 n$, $\frac{2n}{px} \leq n$, and $\frac{2n}{qh_1h_2} \leq n$. Hence, $\frac{1}{2} h_1 = (\log n)^{10} \geq 4 h_5$, $\frac{d}{2} \leq c x \rho_1^2$, $q = d(\log n)^{20} \geq 4 \cdot \left(\frac{d}{2}\right)$, and $h_5 = 50 \log^4 n \geq h_3 + 50\rho_1^{-1}\log q + 10\rho_2^{-1} \log\left(\frac{2n}{px}\right) + 10 \rho_2^{-1}\log\left( \frac{2n}{qh_1h_2}\right)$. Finally, as $px= \frac{1}{400} d^2 (\log n)^{25}, qh_1h_2 = 2d^2 (\log n)^{27}$ and $(\frac{d}{2}) h_5 \leq  25 d \log^4 n$, we have $|B \cup (C \setminus C')|+p \leq d^2 (\log n)^{14} + \frac{d}{100}+p$, which is at most $\frac{1}{32}\rho_1 p x$ and also at most $\min\{\frac{1}{16} qh_1h_2 - dh_5,\frac{1}{8} \rho px -  dh_5\}$.
Therefore, we can apply Lemma~\ref{lem_webs} to obtain a $(\frac{1}{2}d, h_5, \frac{1}{2}h_1, h_2, h_3)$-web $U_{s+1}$.
 As $U_{s+1}$ avoids all centers of $S^1, \dots, S^p$ except one and disjoint from $C \setminus C'$, it does not contain any vertex in $C$ except at its center.
 Also, the exterior of $U_{s+1}$ belongs to the units disjoint from $S$, it does not intersect with any stars $S_i$.
 This contradicts the maximality of $S$, proving the claim.
 \end{poc}

\noindent \textbf{Case 2. Uniform Case}: for every $Q \subseteq V(G)$ with $|Q| \leq \frac{n}{\log^{50} n}$, we have $d(G-Q) \geq \frac{d}{100}$. In this case, repeatedly taking stars and applying the condition to the remaining part of the graphs, we can obtain many disjoint copies of $K_{1,d/100}$ to build many webs.
\begin{claim}\label{claim_uniform_webs}
    In Case 2, the graph $G$ contains $bd$ internally disjoint $(h_4, h_5,\frac{1}{2}h_1, h_2, h_3)$-webs.
\end{claim}
Claim~\ref{claim_uniform_webs} is simpler than Claim~\ref{claim_skewed_webs}: we just need to check that Lemma~\ref{lem_webs} applies. We include its proof in the arXiv appendix.

By Claims~\ref{claim_skewed_webs} and~\ref{claim_uniform_webs}, in both cases, we obtain at least $bd \geq \frac{d}{(\log \log n)^4}$ internally disjoint $(h_4, h_5, \frac{1}{2}h_1, h_2, h_3)$-webs.
Finally, we connect these webs to build a desired subdivision using Lemma~\ref{lem_conn_webs}.
Let $t = \frac{d}{(\log\log d)^4}$,  $s = \frac{bd}{(\log\log d)^4}$, and $\rho = \frac{\eps}{\log^2(15 n)}.$
As $h_4h_5t = \frac{50bd^2 \log^4 n}{(\log\log d)^8}$ and $h_5st = \frac{b d^2 \log^4 n}{(\log\log d)^8}$, we have $h_5 = 50 \log^4 n \geq \log^4 n + \frac{\log n}{\rho} = h_3 +\frac{1}{\rho}\log(\frac{8n}{h_1h_2h_4} )$, $\frac{1}{2}\rho h_1 h_2h_4  \geq \frac{\eps d^2 \log^5 n}{(\log\log d)^4} \geq d^2 \log^4 n \geq 50(h_4h_5 t+ 20 h_5 st)$, and $\frac{1}{2} h_1h_4  = \frac{ bd(\log n)^{10}}{(\log\log d)^{4}} \geq \frac{20000bd \log^4 n}{(\log \log d)^4}  =  400 h_5s$. Therefore, we can apply Lemma~\ref{lem_conn_webs} to obtain a $K_{bd/(\log \log d)^4}$-subdivision.

\section{Bottom layer $(\eps,\frac{1}{100}c(G))$-robust expander}\label{sec_nc_expander}
In this section, we streamline the proof Lemma~\ref{lem_nc_expander}, highlighting only the difference with the proof of Lemma~\ref{lem_d2_expander}, and leave the detailed proofs of some similar claims to online appendix. Let $G$ and $H$ be graphs given as in the Lemma~\ref{lem_nc_expander}.
For brevity, we write $n_H=|H|$ and $n_c= c(G)$. 
Choose a number $T$ and $\eta$ such that $0<\frac{1}{n} \ll b \ll \eta \ll \frac{1}{T}\ll \eps\ll 1$.
Let $K = T \cdot \frac{n_H}{n_c}$, $t=\min\Big\{d, \frac{\sqrt{n_c}}{\sqrt{\log n_c}}\Big\}$, and $\rho=\frac{\eps}{\log^2 K}$.

By the assumption of the lemma, $d\geq \frac{\sqrt{n_H}}{(\log n_H)^{1000}}$ and $n_c \geq \frac{n_H}{(\log n_H)^{3000}}$. As $H$ is a subgraph with average degree at least $d/100$, by the definition of $c_{1/100}(G)$, we have $n_H \geq n_c$.
Thus, 
\begin{align}\label{eq: log nc and log nH}
 t\geq \min\left\{\frac{\sqrt{n_H}}{(\log n_H)^{1001}}, \frac{\sqrt{n_H}}{(\log n_H)^{1501}} \right\} \geq \frac{\sqrt{n_H}}{(\log n_H)^{2000}} \enspace \text{and} \enspace \frac{1}{2} \log n_H \leq \log n_c \leq \log n_H.
\end{align}
This implies that we have 
\begin{align}\label{eq: log nHt2}
\log K = \log T + \log\left(\frac{n_H}{n_c}\right) \leq 4000 \log\log n_c \text{ and } \log\left(\frac{n_H}{t^2}\right) \leq 4000 \log\log n_H \leq 5000 \log\log n_c.
\end{align}

We will construct webs of certain size to obtain a subdivision of $K_{\Omega(t/(\log \log n_c)^{6})}$.
For this, we begin with the following important observation.
\begin{claim}\label{cl: expansion in c100}
The graph $H$ has the $(1, \frac{n_H}{2}, \rho)$-expansion-property.
\end{claim}
\begin{poc}
    Consider a vertex set $X\subseteq V(H)$.
    We claim that if $|X|\leq \frac{n_c}{100}$, then $|N_H(X)|\geq |X|$.
    Indeed, for such a vertex set $X$, the average degree of the graph $G[X\cup N_H(X)]$ is
    $$d(G[X\cup N_H(X)]) \geq \frac{\delta(H)|X| }{|X\cup N_H(X)|} \geq \frac{d|X|/16}{|X\cup N_H(X)|}.$$
    If $|N_H(X)|<|X|$, then this is at least $\frac{d}{32}$. However, $|X\cup N_H(X)|< n_c$, a contradiction to the definition of $n_c=c_{1/100}(G)$.
    
    If $\frac{n_c}{100}\leq |X|\leq \frac{n_H}{2}$, then as $H$ is an $(\eps,\frac{n_c}{100})$-robust-expander, we have the desired expansion
    $|N_H(X)|\geq  \frac{\eps}{\log^2(\frac{1500|X|}{n_c} )} |X| \geq \frac{\eps}{\log^2 K}|X|.$
\end{poc}

Recall that we have $t^2 \log n_c \leq n_c$.
As $0<1/T\ll 1$, we have $\log^{1000}\left(Tx\right) \leq Tx$ for any $x\geq 1$.
Then 
\begin{align}\label{eq: t2lognc}
    t^2 \log n_c \cdot \log^{1000}K \leq n_c \cdot \log^{1000}\Big(\frac{Tn_H}{n_c}\Big) \leq T n_H.
\end{align}
Let $h_1 = \log n_c (\log\log n_c) (\log{K})^{20}$,  $h_2 = \frac{\eta^{1/2} t}{\log^2 K}$, $h_3 = \log\log n_c (\log K)^{10}$, $h_4 = \frac{\eta t}{(\log\log n_c)^6}$, and $h_5 = 20 \log n_c (\log K)^{10}$, further set $p=t \log n_c (\log K)^{100}$, $q=\frac{t (\log K)^{10}}{(\log \log n_c)^4}$, $k=\frac{10 t \log n_c (\log K)^{45}}{(\log \log n_c)^2}$, and $x=2h_2$.

With the help of Claim~\ref{cl: expansion in c100}, we can carry out a similar approach as Lemma~\ref{lem_d2_expander} to construct the webs needed for the desired subdivision.

\begin{claim}\label{cl: stars in c100 expander}
	For any vertex set $Q \subseteq V(H)$ with $|Q|\leq \frac{\eps n_H}{100\log^4 K}$,
	the graph $H-Q$ has at least $2t  \log n_c (\log K)^{100}$ vertex-disjoint copies of $K_{1,T h_2}$.
\end{claim}

\begin{claim}\label{cl: units in c100 expander}
    For any vertex set $W\subseteq V(H)$, if
    $|W|\leq \frac{t^2 \log n_c (\log K)^{30}}{(\log\log n_c)^3},$
    then $H-W$ contains $q$ vertex-disjoint $(2h_1,h_2,h_3)$-units.
\end{claim}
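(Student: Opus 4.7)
The plan is to follow the same paradigm used in Claim~\ref{disj_units} and Claim~\ref{cl: units in d2 expander}: take a maximal collection of vertex-disjoint $(2h_1,h_2,h_3)$-units in $H-W$ and, assuming the collection has fewer than $q$ members, produce one additional unit via Lemma~\ref{lem_units}, contradicting maximality.

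More precisely, I would let $\{U_1,\dots,U_s\}$ be a maximal such collection and suppose $s<q$. Since each $(2h_1,h_2,h_3)$-unit has at most $1+2h_1h_3$ interior vertices, the set $U$ of all their interior vertices satisfies
\[
|U| \leq q(1+2h_1h_3) \leq O\!\left(\frac{t\log n_c(\log K)^{40}}{(\log\log n_c)^{3}}\right).
\]
Combined with the hypothesis on $W$, one obtains $|W\cup U| \leq O\!\left(t^2\log n_c(\log K)^{30}/(\log\log n_c)^{3}\right)$. Using $t^2\log n_c\leq n_c$ and \eqref{eq: t2lognc}, this is much smaller than $\eps n_H/(100\log^4 K)$, so Claim~\ref{cl: stars in c100 expander} applied with $Q=W\cup U$ yields $2p$ vertex-disjoint copies of $K_{1,Th_2}$ in $H-W-U$.

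Next I would apply Lemma~\ref{lem_units} with $(p,k,x,h_2,h_3,\rho)$ as defined above, with $2h_1$ playing the role of $h_1$ and $W\cup U$ playing the role of $W$. The $(kx/2,n_H/2,\rho)$-expansion hypothesis is supplied by Claim~\ref{cl: expansion in c100} together with $kx\leq n_H$, which follows from \eqref{eq: t2lognc}. The four numerical conditions of Lemma~\ref{lem_units} are calibrated to hold with large margins: the very large factor $(\log K)^{45}/(\log\log n_c)^{2}$ built into $k$ drives \eqref{eq: conditions on lem_units1}, the choice $p=t\log n_c(\log K)^{100}$ easily dominates $4kh_3 + 4\rho kh_1$, the chain $4(n_H\cdot 2h_1/(px))^3\leq k\leq n_H/(30x)$ in \eqref{eq: conditions on lem_units2} reduces to polynomial inequalities in $\log K$ that are controlled by \eqref{eq: log nHt2}, and the final condition $h_3\geq (30/\rho)\log(10n_H/(\rho kx))$ holds because $\log K = O(\log\log n_c)$, so the right-hand side is $O(\log\log n_c(\log K)^{4})$ while $h_3=\log\log n_c(\log K)^{10}$. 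The auxiliary bound $|W\cup U|\leq \tfrac{1}{100}\rho^2 kx$ comes from the same computation.  Lemma~\ref{lem_units} then produces a $(2h_1,h_2,h_3)$-unit in $H-W-U$, contradicting the maximality of $\{U_1,\dots,U_s\}$.

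The main obstacle is purely computational rather than conceptual: one must simultaneously verify five polylogarithmic inequalities in which many of the parameters $p,q,k,x,h_1,h_2,h_3$ carry carefully chosen exponents of $\log K$, $\log n_c$ and $\log\log n_c$, and where $\rho=\eps/\log^2 K$ introduces further logarithmic losses. Because the parameters were designed so that every inequality holds with substantial slack, no surprise is expected, but the verification requires patient bookkeeping of these factors using \eqref{eq: log nHt2} and \eqref{eq: t2lognc}.
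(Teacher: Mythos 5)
Your overall strategy is exactly the paper's: take a maximal collection of vertex-disjoint units, assume it has fewer than $q$ members, remove the vertices used so far, find $2p$ stars via Claim~\ref{cl: stars in c100 expander}, and invoke Lemma~\ref{lem_units} to produce one more unit. However, there is a genuine gap in how you set up the set of forbidden vertices.

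You define $U$ to be the set of \emph{interior} vertices of the units in the maximal collection and bound it by $q(1+2h_1h_3)$. But the claim demands $q$ \emph{vertex-disjoint} units, not merely internally disjoint ones, and the unit that Lemma~\ref{lem_units} produces lives in $H-W-U$ with $U$ being interiors only. That new unit is then free to reuse exterior (leaf) vertices of $U_1,\dots,U_s$, so it need not be vertex-disjoint from them, and the contradiction with maximality fails. (Vertex-disjointness is not a decorative convenience here: Lemma~\ref{lem_webs}, which consumes the output of this claim, explicitly requires the stars $S_i$ and units $R_j$ to be pairwise vertex-disjoint.) The fix is to take $U = \bigcup_{i\le s} V(U_i)$ in full, giving $|U| \le q(1+2h_1h_3 + 2h_1h_2)$. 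The extra exterior term $q\cdot 2h_1h_2 = \Theta\bigl(\eta^{1/2} t^2\log n_c(\log K)^{28}/(\log\log n_c)^3\bigr)$ is in fact the \emph{dominant} contribution to $|U|$ (your interior term is only $O\bigl(t\log n_c(\log K)^{40}/(\log\log n_c)^{2}\bigr)$, linear rather than quadratic in $t$), yet it is still absorbed by the budget $\tfrac{1}{100}\rho^2 kx$, which is what makes the argument close. As a minor side note, your interior bound should carry $(\log\log n_c)^{-2}$, not $(\log\log n_c)^{-3}$, since $q\cdot h_1 h_3$ has $(\log\log n_c)^2$ in the numerator against $(\log\log n_c)^4$ in $q$; this does not affect the conclusion because that term is not the bottleneck.
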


\begin{claim}\label{cl: webs in c100 expander}
    $H$ contains $h_4$ internally disjoint $(h_4,h_5,h_1,h_2,h_3)$-webs.
\end{claim}

Finally, we will use the webs given by Claim~\ref{cl: webs in c100 expander} to build a subdivision of $K_{\Omega(t/(\log \log n_c)^6)}$.
Let $t'=h_4$ and $s'=\frac{t'}{30}.$
We aim to apply Lemma~\ref{lem_conn_webs} with parameters $n_H, t', h_1, h_2, h_3, h_4, h_5, s'$ and $\rho$.
By our choice of parameters, all these numbers except $\rho$ are larger than $T$.
We now check the conditions of Lemma~\ref{lem_conn_webs}.
As $\frac{8n_H}{h_1h_2h_4} \leq \frac{n_H}{t^2} (\log\log n_c)^6$, we have 
$$h_5  = 20 \log n_c (\log K)^{10} \stackrel{\eqref{eq: log nHt2}}{\geq} 
    \log n_c + \frac{1}{\rho}\log\left(\frac{n_H}{t^2}\right) + \frac{6}{\rho}\log\log\log n_c
    \geq h_3 + \frac{1}{\rho} \log\left( \frac{8n}{h_1h_2h_4}\right),$$ 
    and $\rho h_1h_2h_4 \geq 
    \frac{\eta^{3/2} t^2 \log n_c (\log K)^{15}}{(\log\log n_c)^5} 
    \geq 50 (h_4h_5 t'+ 20h_5s't')$.
Also, we have 
$$h_1h_4 = \frac{\eta t \log n_c (\log K)^{20}}{(\log\log n_c)^5} \geq 
\frac{8000 t \log n_c (\log K)^{10}}{(\log\log n_c)^6 }
\geq 400 h_5 s'.$$
By Claim~\ref{cl: expansion in c100}, $H$ has $(h_1h_2h_4/8,n_H/2,\rho)$-expansion property. 
Therefore, by Lemma~\ref{lem_conn_webs}, we can conclude that $H$ contains a $K_{s}$-subdivision.
By \eqref{eq: log nc and log nH}, we have $\log \log d = \Theta(\log \log n_c)$.
Therefore, we finish the proof of Lemma~\ref{lem_nc_expander}.

	\section{Concluding Remark}\label{sec_conclusion}
	One obvious open problem is to determine the correct exponents on the polylogarithmic factor in Theorem~\ref{main_thm}. It would be also very interesting to give more examples illustrating the replacing average degree by crux paradigm.
	    
	    Motivated by this paradigm and the fact that crux size is a lower bound for the largest sublinear expander subgraph, it is then of great interest to find natural sufficient conditions for a graph $G$ guaranteeing $c(G) = \omega(d(G))$. More generally, when does a graph contain a large crux/subexpander? There are several recent research along this direction: (1)~Krivelevich~\cite{Krivelevich2018} showed that if a graph is locally sparse, then it contains a linear-size subexpander; (2)~Louf and Skerman~\cite{Louf2020} reduced the problem of finding a large expander subgraph to finding a subdivision of an expander; (3)~more recently, Chakraborti, Kim, Kim, Kim and Liu~\cite{Chakraborti2021} proved that if a graph contains a positive fraction of vertices from which the random walk mixes rapidly, then it contains an almost spanning expander subgraph. 

      The notion of crux has one drawback using the minimum order rather than the maximum order over all dense subgraphs. For example, consider disjoint union of $\frac{n}{2t} - \frac{t}{2}$ copies of $K_{t,t}$ and one copy of a random graph $G(t^2, \frac{1}{t})$.
	This has average degree $(1+o(1))t$ and its smallest $\alpha$-crux is a copy of $K_{t,t}$ for $\alpha<1-o(1)$ while the component $G(t^2, \frac{1}{t})$ is more essential part of the graph having a larger clique subdivision.
	
	We can overcome this drawback as follows.
	Consider all subgraphs $G'$ of $G$ with average degree at least, say, $\sqrt{\alpha}d$ and measure their crux size $c_{\sqrt{\alpha}}(G')$.
	We take $G'$ with the maximum $c_{\sqrt{\alpha}}(G')$ and consider $c_{\sqrt{\alpha}}(G')$ as the essential order of $G'$ (and $G$ as well). Then it is easy to check that $c_{\sqrt{\alpha}}(G') \geq c_{\alpha}(G)$.
	In fact, this is a strict inequality for the above example of the disjoint union of $\frac{n}{2t} - \frac{t}{2}$ copies of $K_{t,t}$ and one copy of a random graph $G(t^2, 1/t)$. We know that $c_{\sqrt{\alpha}}(G') = t^2$ while $c_{\alpha}(G)=t$ for $\alpha <1$. So this can pin down the essential part of $G$ more precisely.

    One can of course define 
    $$\displaystyle \max_{G'\subseteq G, d(G')\geq \sqrt{\alpha}d} c_{\sqrt{\alpha}}(G')$$ 
    as the `essential order' of $G$. However, we choose not to do this for the sake of brevity. We can simply assume that our graph $G$ is actually the graph $G'$ that we have passed onto from the original graph. Nonetheless, this argument shows that the notion of the crux is not really ruined by local noises, capturing the essential order of the graph.

\section*{Acknowledgement}
We thank the referees for their detailed comments which drastically improve the presentation of this paper.

\setlength{\parskip}{0pt}
\setlength{\itemsep}{0pt plus 0.3ex}
\footnotesize

\appendix

\section{Proofs of Lemmas~\ref{lem_sparse} and~\ref{lem_bounded_maxdeg}}
We first prove Lemma~\ref{lem_bounded_maxdeg}.
\begin{proof}[Proofs of Lemma~\ref{lem_bounded_maxdeg}]
		We first claim that the absence of $K_{bd}$-subdivision implies that $G$ contains only a few vertices of large degree. More precisely, 
		\begin{align}\label{eq: large deg vtx}
		\big|\{v \in V(G) :d_G(v) > 10 d^2 (\log n)^{10}\}\big| < bd.
		\end{align}
		Indeed, if \eqref{eq: large deg vtx} does not hold, let $A=\{v_1, \ldots, v_{bd}\}$ be a set of exactly $bd$ vertices of degree larger than $10 d^2 (\log n)^{10}$. We will construct a $K_{bd}$-subdivision by finding internally disjoint paths of length at most $\log^4 n$ between any pair of vertices in $A$. For this, consider the following simple greedy algorithm.
		\begin{algorithm}[H]
			\caption{ }
			\label{algo_2}
			\begin{algorithmic}[1]
				\State $P=\varnothing$, $A=\{v_1,\ldots, v_{bd}\}$.
				\For{$i=1, 2, \cdots, bd$}
				\For{$j=i+1, i+2, \cdots, bd$}
				\State Let $P_{ij}$ be a shortest path connecting $v_i$ and $v_j$ while avoiding $P \cup A \setminus \{v_i, v_j\}$. \label{algo2_path}
				\State $P \gets P \cup V(P_{ij})$.
				\EndFor
				\EndFor
				
			\end{algorithmic}
		\end{algorithm}
		We use induction to prove that we can find a path of length at most $\log^4{n}$ at Line~\ref{algo2_path} at each iteration. If this was the case for all iterations up to $i \leq \ell-1$ for some $\ell \in [bd]$, we have
		$$ |P \cup A | \leq {bd \choose 2}\cdot \log^4 n+ bd \leq \frac{1}{4}  \rho(|N(v_{\ell})|)\cdot |N(v_{\ell})|$$
		as $|N(v_{\ell})| > 10d^2 (\log n)^{10}$ and $\rho(x) \geq 1/\log^3 n$ for every $k/5 \leq x \leq n$. 
		Then Lemma~\ref{short_path} implies that there exists a path between $N(v_i)$ and $N(v_j)$ of length at most $\frac{2}{\eps}\log^3(\frac{15n}{\eps d}) \leq (\log^4 n)-2$ avoiding $A \cup P$. Thus, the shortest path connecting $v_i$ and $v_j$ while avoiding $(A \cup P) \setminus \{v_i, v_j\}$ has length at most  $\log^4 n$. Therefore, by repeating this for all $\ell \in [bd]$, we have internally vertex-disjoint paths connecting vertices of $A$ yielding a copy of $K_{bd}$-subdivision in $G$. This proves \eqref{eq: large deg vtx}.
		
		Now, we delete all vertices of degree larger than $10d^2 \log^{10} n$ from $G$ to obtain a graph $H$. As we have deleted less than $bd< n/2$ vertices, $H$ has at least $n/2$ vertices and $\delta(H) \geq d/2-bd \geq d/4$. 
		Also, by construction, $\Delta(H) \leq 10 d^2 (\log n)^{10}$.
		
		Hence, it only remains to prove that $H$ is  an $(\eps/2, k)$-robust-expander.  
		Consider a set $X \subseteq V(H)$ with $\frac{k}{2} \leq |X| \leq \frac{|V(H)|}{2}$.
		As $\rho(x,\eps/2,k)x$ is an increasing function in $x$, we have 
				\begin{align*}
				   \frac{1}{2} \rho(|X|,\eps/2,k)|X| \geq \frac{1}{2} \rho(k/2,\eps/2,k)\cdot (k/2) \geq \frac{\eps k}{8\log^2(15/2)} \geq bd\geq |A|.
				\end{align*}
		For a set of edges $F$ with $|F| \leq \rho(|X|, \eps/2, k)\cdot d\cdot |X| \leq  \rho(|X|, \eps, k) \cdot d  \cdot |X|$, 
		 we have
		\begin{align*}
		|N_{H\setminus F}(X)| &\geq |N_{G\setminus F}(X)|-|A| 
		\geq \rho(|X|, \eps, k)|X|- |A|  \\
		&\geq \frac{1}{2} \rho(|X|, \eps, k)|X| \geq \rho(|X|, \eps/2, k) |X|.
		\end{align*}
Therefore, $H$ is an $(\eps/2, k)$-robust-expander as desired.
\end{proof}

To prove Lemma~\ref{lem_sparse}, we use the following lemma. This follows from the same proof as Lemma~5.1 in \cite{Liu2017} using slightly different parameters.
	\begin{lemma}[\cite{Liu2017}] \label{lem_sparse_blackbox}
		For $0<\eps< \frac{1}{100}$, there exists $b>0$ such that the following holds. If $H$ is an $n$-vertex $(\eps,\eps d)$-expander with $d\leq \exp(\log^{1/6} n)$, $\delta(H)\geq d/4$ and $\Delta(H)\leq 20 d^2 (\log n)^{10}$, then $H$ contains a $K_{bd}$-subdivision.
	\end{lemma}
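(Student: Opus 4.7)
I would follow the same high-level paradigm as the proof of Lemma~\ref{lem_dense} in Section~\ref{sec_dense}: construct $bd$ internally vertex-disjoint \emph{units}, and then connect them pairwise via internally disjoint short paths. The key difference is parameter scaling: here $d$ is very small, so units must be built using the bounded max-degree hypothesis $\Delta(H)\le 20 d^2(\log n)^{10}$ (which is the small-$d$ analogue of the large-$n$ room we had in Lemma~\ref{lem_dense}) rather than using the smallness of $t$ relative to $d$.

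\textbf{Step 1 (disjoint building blocks).} First I would show that $H$ contains many vertex-disjoint copies of $K_{1,h_2}$ with $h_2=\mathrm{polylog}(n)$, even after deleting any small vertex set $W$ of size $\mathrm{poly}(bd)\cdot \mathrm{polylog}(n)$. This is precisely the sparse analogue of Claim~\ref{disj_stars}: since $H-W$ is still an expander with $d(H-W)\ge \tfrac{1}{20}\rho(n,\eps,\eps d)\cdot d$ by Lemma~\ref{lem_avg_deg}, we may greedily pull out stars of the required size. The upper bound $\Delta(H)\le 20d^2(\log n)^{10}$ enters here to guarantee $n$ is large enough (namely $n\ge \delta(H)\cdot n/\Delta(H)$ forces $n=\omega(d^2\mathrm{polylog}(n))$ together with $d\le \exp(\log^{1/6} n)$) so these greedy extractions cannot terminate prematurely.

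\textbf{Step 2 (build $bd$ internally disjoint units).} Iteratively construct $bd$ internally disjoint $(h_1,h_2,h_3)$-units with $h_1\asymp bd$, $h_2=\mathrm{polylog}(n)$, and $h_3=O(\log^{O(1)} n/\eps)$, following the template of Claim~\ref{disj_units}: for the next unit, take a fresh candidate centre, call Step~1 to produce many disjoint stars $R_1,\dots,R_q$ outside the current forbidden set, and then apply Corollary~\ref{cor_single_large_set} to one of the stars $S_i$ to guarantee that $B^r_{H-(W\cup W_i)}(S_i)$ is large. Lemma~\ref{lem: short_path_general} then produces $h_1$ short paths from this fat ball to distinct $R_j$'s, forming the branches of the next unit. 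The smaller $h_2$ in the sparse regime is forced by the fact that the average degree after deletion is only $\Theta(d/\log^2 n)$, so individual stars have at most $O(d/\log^2 n)$ leaves.

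\textbf{Step 3 (connect into a $K_{bd}$-subdivision).} Run the pairwise-connection algorithm (Algorithm~\ref{algo_1}): for each pair $U_i,U_j$ among the $bd$ units, use Lemma~\ref{short_path} to find a short path through $H$ from the exterior of $U_i$ to the exterior of $U_j$, avoiding the interiors of all other units and all previously built paths, and extend inside $U_i,U_j$ to their centres. Deleting exhausted branches as in Algorithm~\ref{algo_1} keeps the exteriors available, and the union of the $\binom{bd}{2}$ paths together with the centres gives the desired $K_{bd}$-subdivision.

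\textbf{Main obstacle.} The delicate point is parameter balance in the small-$d$ regime. The ``exterior budget'' $h_1 h_2 = bd\cdot \mathrm{polylog}(n)$ per unit must dominate the cumulative forbidden set, which grows like $(bd)^2 \cdot h_3=O(b^2 d^2\,\mathrm{polylog}(n))$ across all pairs. The saving grace is the hypothesis $d\le \exp(\log^{1/6} n)$: it forces $d^2\cdot\mathrm{polylog}(n)=n^{o(1)}$, giving room for a sufficiently small absolute constant $b>0$ to make every expansion inequality go through. The bounded max-degree condition $\Delta(H)\le 20 d^2(\log n)^{10}$ is essential in Step~1: without it, the edges of $H$ could be concentrated on few vertices and the greedy extraction of disjoint stars (and hence the entire unit construction) would break down.
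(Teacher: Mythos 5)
The paper gives no proof here: the lemma is cited verbatim from Liu and Montgomery~\cite{Liu2017} (their Lemma~5.1), with the remark that it ``follows from the same proof \dots using slightly different parameters.'' So the right question is whether your sketch actually goes through, and it does not: the parameter balance you wave at in your last paragraph is genuinely fatal, and the hypothesis $d\le\exp(\log^{1/6}n)$ does not save it.

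Here is the concrete obstruction. In the Lemma~\ref{lem_dense} template, the forbidden set at the connecting stage consists of the interiors of all other units plus the already-built paths; with $t=bd$, $h_1\asymp bd$ and $h_3\asymp \eps^{-1}\log^3 n$, this set has size of order $t\cdot h_1h_3 + t^2 h_3 \asymp b^2 d^2\eps^{-1}\log^3 n$. A single unit's exterior has size $h_1h_2\asymp bd\cdot h_2$. Lemma~\ref{lem: short_path_general} needs the forbidden set to be at most $\tfrac{\rho}{4}$ times this exterior, with $\rho\asymp \eps/\log^2 n$, which forces $h_2\gtrsim bd\,\eps^{-2}\log^5 n$. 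But after Lemma~\ref{lem_avg_deg} the residual average degree is only $\rho d/20\asymp \eps d/\log^2 n$, so the stars extractable in Step~1 have only $O(\eps d/\log^2 n)$ leaves. Combining, $bd\,\eps^{-2}\log^5 n\lesssim \eps d/\log^2 n$ forces $b\lesssim \eps^3/\log^7 n$, which shrinks with $n$ and is therefore not the absolute constant the lemma demands. The hypothesis $d\le\exp(\log^{1/6}n)$ controls the ratio $n/(d^2\,\mathrm{polylog}\,n)$, i.e.\ ensures the subdivision fits in the graph, but it does nothing for the ratio between a unit's exterior and the cumulative interiors, which is where your argument breaks: when $t\approx d$ one simply cannot avoid all other interiors.

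This is exactly the regime in which this paper abandons the unit/avoid paradigm: Lemma~\ref{lem_d2_expander} handles $t\approx d$ by building \emph{webs} and allowing the connecting paths to collide with web interiors, repairing damage by deleting branches, units and webs with bounded loss. Liu and Montgomery's Lemma~5.1 likewise does not proceed by routing around all used vertices; it relies on redundancy plus a random vertex-splitting step in which the hypothesis $\Delta(H)\le 20d^2(\log n)^{10}$ is what makes degrees concentrate. Your attribution of the max-degree condition to ``greedy star extraction'' is also off: Lemma~\ref{lem_avg_deg} needs no bound on $\Delta$ at all, and the inequality $n\ge \delta(H)\cdot n/\Delta(H)$ you invoke is vacuous. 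To actually prove this lemma you would need to import either the splitting-and-redundancy machinery from~\cite{Liu2017} or the web-based argument of Section~\ref{sec_d2_expander}.
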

	By Lemma~\ref{lem_bounded_maxdeg}, either $G$ has a $K_{bd}$-subdivision, or it contains an $(\eps/2, \eps d)$-expander $H$ as a subgraph with $|H|\geq n/2$ and $\delta(H)\geq d/4$ and $\Delta(H)\leq 10d^2 (\log n)^{10}$. In the latter case, we can invoke Lemma~\ref{lem_sparse_blackbox} to embed $K_{bd}$-subdivision in $H\subseteq G$, finishing the proof of Lemma~\ref{lem_sparse}.

\section{Missing proofs in Lemma~\ref{lem_d2_expander}}

 \begin{proof}[Proof of Claim~\ref{cl: stars in d2 expander}]
    	Consider a maximal collection $\mathcal{S}$ of vertex-disjoint copies of $K_{1,4h_2}$ in $G-Q$. Let $S$ be the set of vertices belonging to one of the stars in $\mathcal{S}$. 
	If $\mathcal{S}$ consists of less than $\frac{n}{d \log^5 n}$ copies, then we have $ |Q\cup S| \leq \frac{n}{\log^5 n } + (1+ 4 h_2)\cdot \frac{n}{d \log^5 n} \leq \frac{n}{\log^4 n}.$
	Hence Lemma \ref{lem_avg_deg} applies and $G-(Q\cup S)$ has average degree at least $\frac{4d}{\log^3 n}=4h_2$. This allows us to add another vertex-disjoint copy of $K_{1,4h_2}$ to the maximal collection, a contradiction.
\end{proof}

\begin{proof}[Proof of Claim~\ref{cl: units in d2 expander}]
Take a maximal collection $\{U_1,\dots, U_q\}$ of vertex-disjoint $(h_1, 2h_2, h_3)$-units in $G-W$. Let $U = \bigcup_{i\in [q]} V(U_i)$. If $q\geq \frac{n}{d(\log n)^{150}}$, then we are done.
Otherwise, we have
 $$|U| \leq \frac{n}{d(\log n)^{150}} \cdot (1+2h_1h_2 + h_1h_3) \leq \frac{n}{(\log n)^{100}}.$$

Note that \eqref{eq: d range in d2 expander} implies that $n \geq d^2 (\log n)^{400}$. Let 
$$ p=\frac{n}{d (\log n)^{20}},\enspace \rho =\frac{\eps}{\log^2(15n)}, \enspace x= 4h_2,  \enspace \text{and} \enspace k= \frac{n}{d(\log{n})^{30}}.$$
Now we wish to apply Lemma~\ref{lem_units} with the parameters $k,x,h_1,h_3, \rho$ playing role of themselves and $2h_2$, $W \cup U$, $1/\eps$ playing roles of $h_2, W, T$, respectively.
As $0<\frac{1}{n} \ll b \ll \eps \ll 1$, \eqref{eq: d range in d2 expander} implies that the integers $n, k, p, x, h_1, 2h_2, h_3$ are all larger than $\frac{1}{\eps}$, which can play the role of $T$.
For this application, we check the conditions of the lemma.
We have $p= (\log n)^{10} k$, $h_1 \leq 2(\log n)^{10}$ and $\log^2 n \rho^{-1}, h_3 \leq \log^5 n$, so the conditions in \eqref{eq: conditions on lem_units1} are satisfied as follows:
  \begin{align*}
   x &= \frac{4d}{\log^3 n} \geq 200 (\log n)^{70} + \frac{x}{2} \geq \frac{200 ph_1h_3}{\rho^2 k} + 2h_2, \\
   p&= k (\log n)^{10} \geq 4kh_3 + 2\rho k h_1.
   \end{align*}
Moreover, as $px = \frac{4n}{(\log n)^{23}}$ and $kx = \frac{4n}{(\log n)^{33}}$, the conditions in \eqref{eq: conditions on lem_units2} also hold as follows:
\begin{align*}
    &4\left( \frac{nh_1}{px} \right)^3 \leq 4 (\log n)^{100} \leq k  \leq \frac{n}{30x}, \\
    h_3&= \log^4 n \geq (\log n)^3 \log( (\log n)^{40} ) \geq \frac{30}{\rho} \log\left(\frac{10 n}{\rho k x} \right).
\end{align*} 
As $G$ is an $(\eps, d^2)$-robust-expander and $d^2 \leq \frac{4n}{(\log n)^{2000}} \leq \frac{kx}{2}$ by \eqref{eq: d range in d2 expander}, $G$ has the $(\frac{kx}{2}, \frac{n}{2}, \rho)$-expansion-property and 
$$|W\cup U| \leq \frac{n}{(\log n)^{100}}+ \frac{n}{(\log n)^{100}} \leq \frac{1}{100} \rho^2 kx.$$
By Claim~\ref{cl: stars in d2 expander}, $G-(W \cup U)$ contains at least $2p$ vertex-disjoint stars of size $x=4h_2$.
Thus we can apply Lemma~\ref{lem_units} to obtain an $(h_1,2h_2,h_3)$-unit in $G-(W\cup U)$. This contradicts the maximality of $\{U_1,\dots, U_q\}$. Thus $G-W$ contains at least $\frac{n}{d(\log n)^{150}}$ vertex-disjoint $(h_1, 2 h_2, h_3)$-units and this finishes the proof of the claim.
\end{proof}

\begin{proof}[Proof of Claim~\ref{claim_uniform_webs}]
	Choose a number $c>0$ such that $0<b \ll c \ll 1$.
Consider a maximal collection of internally disjoint $(h_4, h_5, \frac{1}{2}h_1, h_2, h_3)$-webs. Assume that it contains less than $bd$ webs.
Let $W$ be the set of interior vertices of webs, then we have
$$|W| \leq bd \cdot (1+ h_4h_5 + \frac{1}{2}h_1h_3h_4) \leq d^2 (\log n)^{14}.$$
Let 
$$ x= \frac{d}{100}, \enspace p = 100 d (\log n)^{30} \enspace \text{ and } \enspace q = d(\log n)^{30}.$$
 As we are in Case 2 and $pd+ |W|\leq \frac{n}{(\log n)^{200}}$ by \eqref{eq: d range in d2 expander}, the graph  $G-W$ has average degree at least $d/100$ even after further deleting $p$ vertex-disjoint copies of $K_{1,d/100}$, Hence, we can choose $p$ vertex-disjoint copies $S_1,\dots, S_p$ of $K_{1,d/100}$ in $G-W$. As 
 $$|W\cup \bigcup_{i\in [p]} V(S_i)| \leq d^2 (\log n)^{14} + 2d^2 (\log n)^{30} \leq \frac{n}{(\log n)^{100}}$$
and $q = d(\log n)^{30}$, by Claim~\ref{cl: units in d2 expander}, 
there exist vertex-disjoint $(h_1, 2h_2, h_3)$-units $R_1, R_2, \cdots, R_q$ in $G-W-\bigcup_{i \in [p]} V(S_i)$.
Let $$\rho_1=\frac{\eps}{10^5(\log \log d)^2} \enspace \text{ and } \enspace \rho_2=\frac{\eps}{\log^2(15n)}.$$
We wish to apply Lemma~\ref{lem_webs} with the parameters $p,q,x ,h_2,h_3,h_4, h_5$ and $\rho_1, \rho_2$ and the set $W$ playing roles of themselves and $h_1/2, 1/\eps$ playing role of $h_1, T$, respectively.
As $0< \frac{1}{n} \ll b \ll \eps \ll 1$, \eqref{eq: d range in d2 expander} implies that the numbers $p, q, x, h_1, h_2, h_3, h_4, h_5$ are all larger than $\frac{1}{\eps}$, which can play the role of $T$.

Note that $px = d^2 (\log n)^{30}$.
As $G$ is an $(\eps/2, d^2)$-expander and $px \geq d^2$, it has $(d^2 , \frac{1}{2} n, \rho_2)$-expansion property. Moreover, it has $(px , 4px, \rho_1)$-expansion property as well. Indeed, as $\log d \geq \log^{1/6} n$, we have
$$\rho(4d^2 (\log n)^{30}) = \frac{\eps/2}{\log^2( 60(\log n)^{30} )} \geq \frac{\eps}{2000 (\log\log n)^2} \geq \frac{\eps}{10^5 (\log\log d)^2 }.$$
Hence, $G$ being an $(\eps/2, d^2)$-expander implies that it has $(px , 4px, \rho_1)$-expansion property.
Moreover, we have $h_3 = \log^4 n$, $\frac{2n}{px} \leq n$, and $\frac{2n}{qh_1h_2} \leq n$. Hence, so we have
   \begin{align*}
\frac{1}{2} h_1 &= (\log n)^{10} \geq 4 h_5, \\
h_4 & = \frac{bd}{(\log\log d)^4}  \leq c x \rho_1^2, \\
q &= d(\log n)^{30} \geq 8 h_4, \\
h_5 &= 50 \log^4 n \geq 
h_3 + 50\rho_1^{-1} \log q + 10\rho_2^{-1}  \log\frac{2n}{px} + 10\rho_2^{-1} \log\frac{4n}{qh_1h_2}.
\end{align*}
where the second line comes from the fact that $b\ll \eps$.
Finally, as $px= d^2 (\log n)^{30}, qh_1h_2 = 2d^2 (\log n)^{37}$ and $h_4h_5\leq d \log^4 n$, we have
$$|W| \leq d^2 (\log n)^{14} \leq \left\{
\begin{array}{l}
\frac{1}{32}\rho_1 p x -p,\\
\frac{1}{16} qh_1h_2 - p - 2h_4h_5, \\
\frac{1}{8} \rho px - p  - 2h_4 h_5.\\
\end{array}\right.$$
 Therefore, we can apply Lemma~\ref{lem_webs} to obtain a $(h_4, h_5, h_1/2, h_2, h_3)$-web outside $W$ as desired.
 This contradicts the maximality of our choice, implying that $G$ contains at least $bd$ internally disjoint $(h_4, h_5, h_1/2, h_2, h_3)$-webs.
 This proves the claim.
\end{proof} 

\section{Missing proofs in Lemma~\ref{lem_nc_expander}}
\begin{proof}[Proof of Claim~\ref{cl: stars in c100 expander}]
	Consider a maximal collection of vertex-disjoint copies of $K_{1,Th_2}$ in $H-Q$. Let $S$ be the set of vertices belonging to one of those stars. 
	If the collection of maximal stars consists of less than $2t  \log n_c (\log K)^{100}$ copies, then
	$$ |Q\cup S| \leq \frac{\eps n_H}{100 \log^4 K} + (1+ T h_2)\cdot 2t  \log n_c (\log K)^{100}\leq \frac{1}{100} \rho n_H + \eta^{1/3}  t^2 \log n_c (\log K)^{98}  \leq  \frac{1}{20} \rho n_H.$$ 
	We have the final inequality as $t^2 \log n_c \leq n_c$ and $K = \frac{T n_H}{n_c} > (\log K)^{100}$. 
	Hence Lemma~\ref{lem_avg_deg} applies and $H-(Q\cup S)$ has average degree at least $\frac{\eps d}{20\log^2 K}\geq Th_2$. This allows us to add another vertex-disjoint copy of $K_{1,T h_2}$ to the maximal collection, a contradiction.
\end{proof}

\begin{proof}[Proof of Claim~\ref{cl: units in c100 expander}]
 Let $U_1,\dots, U_m$ be a maximal collection of vertex-disjoint $(2h_1,h_2,h_3)$-units in $H-W$. Assume $m< q$, as otherwise we are done.
 Let $U = \bigcup_{i\in [m]} V(U_i)$.
 Then we have 
 $$|W\cup U|\leq \frac{t^2 \log n_c (\log K)^{30}}{(\log\log n_c)^3} 
 + q( 1+ 2h_1h_3 + 2h_1h_2) \leq \frac{2 t^2 \log n_c (\log K)^{30}}{(\log\log n_c)^3}. $$
 
As $t^2\log n_c\leq n_c$ and $(\log K)^{34} \leq K$, we have 
$|W\cup U| \leq \frac{\eps n_H}{100 \log^4 K}$. So Claim~\ref{cl: stars in c100 expander} implies that $H-(W\cup U)$ contains $2p$ vertex-disjoint copies of $K_{1,2h_2}$.
We aim to apply Lemma~\ref{lem_units} with the parameters $n_H,p,x,k,h_2,h_3$ play the role of themselves and $2h_1$ and $(W \cup U)$ play the roles  of $h_1$ and $W$, respectively.
By our choice of parameters, all those numbers are larger than $T$.
As $kx \leq n_H$, Claim~\ref{cl: expansion in c100} shows that $H$ has the $(\frac{kx}{2}, \frac{n_H}{2}, \rho)$-expansion property.
Note that each of $\frac{p}{k}, h_1,h_3$ and $\rho^{-1}$ are smaller than $\log n_c (\log\log n_c)^2 (\log{K})^{100}$ and $t \geq (n_c)^{1/3}$ by \eqref{eq: log nc and log nH}. Hence 
\begin{align*}
    x&= \frac{\eta^{1/2} t}{\log^2 K} + h_2 \geq 
    200 \left(\log n_c (\log\log n_c)^2 (\log{K})^{100}\right)^5 + h_2 \geq \frac{200ph_1h_3}{\rho^2 k} + h_2, \\
    p&=  t \log n_c (\log K)^{100} \geq \frac{40 t \log n_c (\log K)^{55} }{\log\log n_c} + \frac{20 \eps  t \log n_c (\log K)^{63} }{\log\log n_c}    =
    4kh_3 + 2\rho kh_1.
\end{align*}
This verifies \eqref{eq: conditions on lem_units1}.
Also, as $\frac{10n_H}{\rho kx} \leq  \frac{n_H}{t^2} (\log\log n_c)^3$ and $\frac{n_H}{t^2} \leq (\log n_H)^{4000} $ by \eqref{eq: log nHt2}, we have $\frac{n_Hh_1}{px} \leq \frac{n_H(\log\log n_c)^3}{t^2 \log n_c} \leq (\log \log n_H)^{5000}$ by \eqref{eq: log nc and log nH}, so 
\begin{align*}
    4\left(\frac{n_H h_1}{px}\right)^4 & \leq 4 (\log n_H)^{3\cdot 10^4} \leq  k \leq \frac{n_H}{30 x}, \\ 
    h_3 & = \eps_1^{-1} \log\log n_c (\log K)^{100} \geq 
    30 \eps^{-1} \log^2 K \log\left(\frac{n_H}{t^2}\right) + 90 \eps^{-1} \log^2 K \log\log\log n_c \\
    & \geq 
    \frac{30}{\rho} \log\left(\frac{10n_H}{\rho kx}\right).
\end{align*}
Here, the penultimate inequality holds by \eqref{eq: log nHt2}.
We also have 
$$|W\cup U|\leq \frac{2 t^2 \log n_c (\log K)^{30}}{(\log\log n_c)^3} \leq \frac{1}{100}\rho^2 kx.$$
 Therefore, by applying Lemma~\ref{lem_units}, we get an $(2h_1,h_2,h_3)$-unit in $H-(W\cup U)$, a contradiction to the maximality of $U_1,\dots, U_m$. Hence we obtain $m\geq q$ as desired.
\end{proof}

\begin{proof}[Proof of Claim~\ref{cl: webs in c100 expander}]
 Let $U_1, \cdots, U_m$ be a maximal collection of internally vertex-disjoint 
 $(h_4,h_5,h_1,h_2,h_2)$-webs. Assume $m< h_4$, as otherwise we are done.
 Let $W$ be the set of all interior vertices of webs. Then we have
 \begin{align}\label{eq: W size c100}
	|W| & \leq h_4( 1 + h_4 h_5 + h_4 h_1h_3 ) \leq \frac{2 t^2 \log n_c (\log K)^{30} }{ (\log\log n_c)^{10}}.
 \end{align}
 Then by Claim \ref{cl: units in c100 expander}, we can find $q$ 
 vertex-disjoint $(2h_1,h_2,h_3)$-units $R_1, R_2, \cdots, R_q$.
Also, letting $R= \bigcup_{i\in [q]} V(R_i)$, 
we have
 \begin{align*}
	|W \cup R|  \leq |W| + q(1+2h_1h_3 + 2h_1h_2) 
	 \leq \frac{10 t^2 \log n_c \log^{30} K}{ (\log \log n_c)^2}.
\end{align*}
 Thus, by Claim~\ref{cl: stars in c100 expander}, $H-(W\cup R)$ contains $p$ vertex-disjoint copies $S_1, \cdots, S_{p}$ of $K_{1,2h_2}$ in $G-(W\cup R)$. 
 We will apply Lemma~\ref{lem_webs} with the parameters $n_H, p,q,x,h_1,h_2,h_3,h_4,h_5$ and $\rho$ play the roles of $\rho_1$ and $\rho_2$ at the same time and $\eta^{1/4}, W$ play the roles of $c$ and $W$, respectively.
 By our choice of parameters, all these numbers except $\rho_1$ and $\rho_2$ are larger than $T$.
We now check the conditions.
As we have 
 \begin{align*}
     h_1& = \log n_c \log\log n_c (\log K)^{20} \geq 80 \log n_c (\log K)^{10} =4 h_5,\\
     h_4&= \frac{\eta t}{(\log\log n_c)^6} \stackrel{\eqref{eq: log nHt2}}{\leq}   \eta^{1/4} \cdot \frac{\eps^2 \eta^{1/2} t}{\log^6 K} \leq \eta^{1/4} \rho^2x,\\
     q & = \frac{ t (\log K)^{10} }{(\log\log n_c)^4}\geq \frac{8 \eta t}{(\log\log n_c)^6} = 8h_4, \\
     h_5& \geq h_3 + 10 \log n_c (\log K)^{10}
     \geq   h_3 + \frac{50 \log q}{\rho_1} + \frac{10 \log\left(\frac{2n_H}{px} \right) + 10 \log\left(\frac{4n_H}{qh_1h_2} \right)}{\rho_2}.
 \end{align*}
We obtain the final inequality, as each of $q, \frac{2n}{px}, \frac{4n}{qh_1h_2}$ is smaller than $n_H$ and by \eqref{eq: log nc and log nH}.
 By \eqref{eq: W size c100}, we also have that 
 $$\rho px  =2 \eps \eta t^2 \log n_c \log^{96} K\geq 32(|W|+p).$$

Note that 
\begin{align*}
    2h_4h_5+p &=  \frac{2\eta t}{(\log \log d)^6} \times 20 \log n_c (\log K)^{10} + t \log n_c (\log K)^{100} \leq  \frac{2 t^2 \log n_c (\log K)^{30} }{ (\log\log n_c)^{10}}  \text{ and } \\
    \frac{1}{16} \rho qh_1h_2 &= \frac{\eps \eta^{1/2} t^2 \log n_c  (\log K)^{26}}{16(\log \log n_c)^3} \geq  \frac{4 t^2 \log n_c (\log K)^{30} }{ (\log\log n_c)^{10}}.
\end{align*}
Here, the final inequality holds by \eqref{eq: log nHt2}.
So we can conclude that $|W|+p+2h_4h_5 \leq \rho \min\{\frac{px}{8}, \frac{qh_1h_2}{16}\}$.

Hence, Lemma~\ref{lem_webs} provides another $(h_4, h_5, h_1, h_2, h_3)$-web $U_{m+1}$ internally disjoint from $U_1, \dots, U_m$, a contradiction to the maximality of $m$.
Therefore, $m \geq h_4$.
 \end{proof}

\end{document}